\newtheorem{definition}{Definition}[section]
\newtheorem{theorem}{Theorem}[section]
\newtheorem{proposition}{Proposition}[section]
\newtheorem{lemma}{Lemma}[section]
\newtheorem{remark}{Remark}[section]
\newtheorem{assumption}{Assumption}[section]
\newtheorem{corollary}{Corollary}[section]
\newtheorem{notation}{Notation}[section]
\newtheorem{example}{Example}
\begin{document}
\title{Stochastic Time-Periodic Tonelli Lagrangian \\on Compact Manifold }
\author{Liang Chen \thanks{Mathematics Department, University Tor Vergata of Roma, Roma, Italy}\thanks{Email Address: chen@axp.mat.uniroma2.it}}
\date{}
\maketitle

\begin{abstract}
	In this paper, we study a class of time-periodic stochastic Tonelli Lagrangians on compact manifolds. Precisely, we discuss the stochastic  Mane Critical Value, prove the existence of stochastic Weak KAM solutions of the related Hamilton-Jacobi equation. Furthermore, we survey the global minimizer.
\end{abstract}

\section{Introduction}

\quad\quad Time-periodic Tonelli Lagrangians have been extensively studied in recent years by John Mather\cite{mather1991action}, Ricardo Mane \cite{mane1996generic} \cite{mane1992minimizing}, Patrick Bernard \cite{bernard2008dynamics} \cite{bernard2002connecting}, Daniel Massart \cite{massart2006subsolutions}, etc. It is closely related to calculus of variations, the weak KAM theory \cite{fathi2008weak}, Aubry-Mather theory and Optimal Transport(\cite{villani2008optimal}). In this paper, we study the time-periodic Tonelli Lagrangian by adding a stochastic variable. We discuss the measurability of Mane-Critical Value, prove the existence of stochastic viscosity solution for a system of Hamilton-Jacobi equations, 
and describe the global minimizer.

Bacis Setting: Let $M$ be a compact, connected Riemannian manifold,  $TM$ its tangent bundle. $(\Omega,\mathscr{F},\mathbb{P})$ is a probability space, $\mathscr{F}$ is the  $\sigma-$algebra of $\Omega$. $\theta: R \times \Omega \to \Omega$ is a measurable function with skew-product structure. Let $\mathscr{B}$ be the Borel algebra of $\mathbb{R}$. Then the $\sigma-$algebra of $\mathbb{R}\times\Omega$ is given by $\mathscr{B}\otimes \mathscr{F}$. The stochastic time-periodic lagrangian $L:TM \times T \times \Omega \to R$,  is a measurable function.  For each $\omega \in \Omega$, $L(\cdot,\cdot,\cdot,\omega):TM \times T \to R$ is a Tonelli Lagrangian, and for each $(x,v,t)\in TM \times T$, $L(x,v,t,\cdot) : \Omega \to R$ is a measurable function. For each $(x,v,t,\omega) \in TM \times T \times \Omega$, $s \in R$, we assume
\[
L(x,v,t+s,\omega)=L(x,v,t,\theta(s,\omega))
\]

Let $-\infty < s < t < \infty$, $x,y \in M$, $\omega \in \Omega$. $AC(s,x;t,y)$ is the space of absolutely continuous curves in $R \times M$, connecting $(s,x)$ and $(t,y)$  The stochastic Lagrangian action $A^{\omega}(s,x;t,y)$ is
\[
A^{\omega}(s,x;t,y)=\inf_{\gamma \in AC(s,x;t,y)}\int^{t}_{s}L(\gamma(\sigma),\dot{\gamma}(\sigma),\sigma,\omega)d\sigma
\]

Here, we study the following topics:

1. Mane-Critical Value

2. The existence of stochastic viscosity solution for a stochastic system of Hamilton-Jacobi equations.

3. global minimizer.

For each $\omega \in \Omega$, $\Phi^{\omega}$ is the set of invariant measure of Hamiltonian flow in $TM \times T$ for the Tonelli Lagrangian $L(\cdot,\cdot,\cdot,\omega)$.  The Mane-Critical Value for $L(\cdot,\cdot,\cdot,\omega)$,  is 
\begin{equation}\nonumber
\alpha(\omega)=\inf_{\mu \in \Phi^{\omega}} \int_{TM \times T}L(x,v,t,\omega)  d \mu
\end{equation}

The first main result is: 
\begin{theorem}
	The function $\alpha(\omega)$ is a measurable function on $\Omega$. In particular, if $\{\theta(t): \Omega \to \Omega|t \in R \}$ is ergodic, $\alpha(\omega)$ is constant almost everywhere on $\Omega$.
\end{theorem}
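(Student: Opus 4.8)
The plan is to avoid working directly with the $\omega$-dependent family of invariant measures $\Phi^{\omega}$ — whose underlying objects vary with $\omega$ and are therefore awkward for a measurability argument — and instead to pass to a purely variational description of $\alpha(\omega)$ in terms of the action $A^{\omega}$. First I would record the cocycle-type identity forced by the skew-product hypothesis $L(x,v,t+s,\omega)=L(x,v,t,\theta(s,\omega))$: performing the time change $\tau=\sigma+r$ in the action integral gives
\[
A^{\theta(r,\omega)}(s,x;t,y)=A^{\omega}(s+r,x;t+r,y),
\]
valid for every $r\in\mathbb{R}$. Second, I would use the standard characterization of the Ma\~{n}\'{e} critical value as a long-time minimal average of the action,
\[
\alpha(\omega)=\lim_{n\to\infty}\frac{1}{n}\,\inf_{x\in M}A^{\omega}(0,x;n,x),\qquad n\in\mathbb{N},
\]
which identifies $\inf_{\mu\in\Phi^{\omega}}\int L\,d\mu$ with the asymptotic action of long closed curves: the easy inequality $\alpha(\omega)\le\liminf$ comes from pushing each closed curve forward to its normalized occupation measure, and the reverse is Ma\~{n}\'{e}'s approximation of minimizing measures by closed orbits.

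For the measurability statement I would argue in two steps. For a fixed absolutely continuous curve $\gamma$, the map $\omega\mapsto\int_{s}^{t}L(\gamma(\sigma),\dot\gamma(\sigma),\sigma,\omega)\,d\sigma$ is measurable, since $L$ is jointly measurable and measurable in $\omega$, so measurability of the integral follows by Fubini--Tonelli. Next I would replace the infimum defining $A^{\omega}$ by a \emph{countable} infimum: by Tonelli's existence theorem and relaxation/density of the action for superlinear Lagrangians, the infimum over $AC(s,x;t,y)$ equals the infimum over a fixed countable dense family $D$ of nice curves (piecewise geodesics with rational break data and vertices in a countable dense subset of $M$), simultaneously for all $\omega$, using lower semicontinuity. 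Hence
\[
\omega\mapsto A^{\omega}(s,x;t,y)=\inf_{\gamma\in D}\int_{s}^{t}L(\gamma,\dot\gamma,\sigma,\omega)\,d\sigma
\]
is a countable infimum of measurable functions, hence measurable. Feeding this into the formula for $\alpha$ — a countable infimum over base points in a dense subset of $M$ followed by a limit in $n$ along the integers — exhibits $\alpha$ as a pointwise limit of measurable functions, so $\alpha$ is measurable.

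For the ergodic conclusion I would show that $\alpha$ is $\theta$-invariant. Combining the two displayed formulas,
\[
\alpha(\theta(s,\omega))=\lim_{n\to\infty}\frac{1}{n}\inf_{x}A^{\theta(s,\omega)}(0,x;n,x)=\lim_{n\to\infty}\frac{1}{n}\inf_{x}A^{\omega}(s,x;n+s,x),
\]
and since shifting the time window by the fixed amount $s$ alters the averaged action only by a contribution that vanishes after dividing by $n$, the right-hand side equals $\alpha(\omega)$. Thus $\alpha\circ\theta(s)=\alpha$ for every $s$. A measurable function invariant under an ergodic flow is constant almost everywhere, so $\alpha$ is a.e.\ constant on $\Omega$.

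The main obstacle is the reduction of the action infimum to a countable one \emph{uniformly in} $\omega$, together with the rigorous justification of the variational formula for $\alpha(\omega)$. Both rest on the Tonelli and Ma\~{n}\'{e} theory of minimizers: existence and regularity of minimizers, lower semicontinuity and density of the action functional under superlinear growth, and the approximation of minimizing holonomic measures by closed orbits. The delicate point is an equicoercivity bound, independent of $\omega$, guaranteeing that minimizers stay in a fixed compact velocity set; this is what lets the single countable family $D$ compute $A^{\omega}$ for all $\omega$ at once. By contrast, once the cocycle identity and the asymptotic formula are in place, the $\theta$-invariance and the passage to almost-everywhere constancy are essentially formal.
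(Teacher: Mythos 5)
Your proposal is correct and follows essentially the same route as the paper's second proof of Theorem 1.1: both rest on the closed-curve characterization of the Ma\~{n}\'{e} critical value (the paper's Proposition 4.1; your asymptotic-average formula is an equivalent form of it), both reduce the relevant infimum to a fixed countable family of curves via separability arguments, obtain measurability of each curve's action in $\omega$ by Fubini, and conclude from the shift identity $L(x,v,t+s,\omega)=L(x,v,t,\theta(s)\omega)$ that $\alpha$ is $\theta$-invariant, hence a.e.\ constant under ergodicity. The only remark worth adding is that the uniform-in-$\omega$ equicoercivity you single out as the delicate point is not actually needed: measurability is a pointwise property in $\omega$, so it suffices that the fixed countable family $D$ computes $A^{\omega}$ for each $\omega$ separately, which density plus continuity of $L(\cdot,\cdot,\cdot,\omega)$ already gives.
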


Now,  we define the Lax-Oleinik Operator, for $u: M \to R$, $(x,t) \in M \times T$, $\lambda \in R$, $\omega \in \Omega$,
\[
T^{\omega}_{\lambda}u(x,t)=\min_{y \in M}\{u(y)+A^{\omega}(t-\lambda,y;t,x)+ \lambda \alpha(\omega)\}
\]

The Hamiltonian, $H: T^{*}M \times T \times \Omega \to R$, is defined as,
\[
H(x,p,t,\omega)=\sup_{v \in T_x M}\{p\cdot v - L(x,v,t,\omega)\}
\]

The second main result is a weak KAM-type theorem,
\begin{theorem}
	For each $u:M \to R$, $(x,t) \in M \times R$, define:
	\[
	u^{\omega}(x,t)=\liminf_{\lambda \to +\infty}T^{\omega}_{\lambda}(u)(x,t) 
	\]
	Then, the following holds true: 
	
	(i)For all  $\omega \in \Omega$, $u^{\omega}(x,t)$ is a viscosity solution of the Hamilton-Jacobi equation:
	\[
	\partial_t u(x,t) + H(x,\partial_x u(x,t),t,\omega)= \alpha(\omega)
	\]
	
	(ii)For all $(x,t) \in M \times R$, $u^{\omega}(x,t)$ are measurable functions on $\Omega$.
	
	(iii)For all $x \in M$, $s,t \in R$, $\omega \in \Omega$, we have 
	
	\qquad $u^{\omega}(x,t+1)=u^{\omega}(x,t)$  \qquad and \qquad
	$u^{\theta(s)\omega}(x,t)=u^{\omega}(x,t+s)$
\end{theorem}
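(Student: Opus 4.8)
The plan is to handle the three assertions in turn: part (i) reduces, for each frozen $\omega$, to the deterministic time-periodic weak KAM theorem, while parts (ii) and (iii) carry the genuinely stochastic content. For (i) I would first record the a priori estimates that make the $\liminf$ meaningful. Since $L(\cdot,\cdot,\cdot,\omega)$ is Tonelli, the minimizers realizing $A^{\omega}(t-\lambda,y;t,x)$ have uniformly bounded speed near the terminal time, so superlinearity and compactness of $M$ force the family $\{T^{\omega}_{\lambda}u\}_{\lambda\ge 1}$ to be equi-Lipschitz and equi-bounded; hence $u^{\omega}$ is finite and Lipschitz. The subsolution property follows from the domination inequality, which for $t<t'$ reads $T^{\omega}_{\lambda}u(x',t')\le T^{\omega}_{\lambda-(t'-t)}u(x,t)+A^{\omega}(t,x;t',x')+(t'-t)\alpha(\omega)$ by additivity of the action and is preserved under $\liminf$. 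For the supersolution property I would, at each base point $(x,t)$, choose minimizers $\gamma_{\lambda}$ attaining $T^{\omega}_{\lambda}u(x,t)$ and extract, by Tonelli compactness and a diagonal argument along a sequence realizing the $\liminf$, a backward-calibrated curve through $(x,t)$; calibration turns the domination inequality into an equality along the curve and yields the supersolution inequality. This is the classical argument of Fathi, applied verbatim for fixed $\omega$.

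For (ii) the point is measurability in $\omega$. I would first show $\omega\mapsto A^{\omega}(s,x;t,y)$ is measurable: the infimum over $AC(s,x;t,y)$ may be restricted to a fixed countable family, for instance piecewise-geodesic curves with rational break times and vertices in a countable dense subset of $M$, which is dense for the action under the Tonelli hypotheses. For each such curve $\sigma\mapsto L(\gamma,\dot\gamma,\sigma,\omega)$ is measurable in $\omega$ by assumption, so its integral is measurable and $A^{\omega}$ is a countable infimum of measurable functions, hence measurable. Since $\alpha(\omega)$ is measurable by the first main result, $T^{\omega}_{\lambda}u(x,t)$ is measurable; the minimum over $y\in M$ reduces to a countable dense set because $A^{\omega}(t-\lambda,\cdot;t,x)$ is continuous in its first spatial endpoint. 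Finally, writing $u^{\omega}(x,t)=\sup_{N}\inf_{\lambda\ge N}T^{\omega}_{\lambda}u(x,t)$ and using equicontinuity of $\lambda\mapsto T^{\omega}_{\lambda}u(x,t)$ to replace the inner infimum by one over rational $\lambda\ge N$, I obtain $u^{\omega}(x,t)$ as a countable supremum of countable infima of measurable functions, hence measurable.

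For (iii) I would exploit the covariance of $L$. The substitution $\sigma=\tau+s$ in the action integral together with $L(\cdot,\cdot,\tau+s,\omega)=L(\cdot,\cdot,\tau,\theta(s)\omega)$ gives
\[
A^{\omega}(t+s-\lambda,y;t+s,x)=A^{\theta(s)\omega}(t-\lambda,y;t,x).
\]
The same time-translation, acting as a conjugacy on the $T$-factor of the extended flow, identifies $\Phi^{\theta(s)\omega}$ with $\Phi^{\omega}$ and preserves $\int L\,d\mu$, so $\alpha(\theta(s)\omega)=\alpha(\omega)$ for every $s$. Substituting both facts into the definition of the Lax--Oleinik operator yields $T^{\omega}_{\lambda}u(x,t+s)=T^{\theta(s)\omega}_{\lambda}u(x,t)$, and passing to $\liminf_{\lambda\to+\infty}$ gives the second identity $u^{\omega}(x,t+s)=u^{\theta(s)\omega}(x,t)$. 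For the periodicity, because the time variable lives on the circle $T=\mathbb{R}/\mathbb{Z}$, the covariance relation at $s=1$ forces $L(\cdot,\cdot,\cdot,\theta(1)\omega)=L(\cdot,\cdot,\cdot,\omega)$; hence $A$, $\alpha$ and therefore $u^{\bullet}$ are all $\theta(1)$-invariant, and $u^{\omega}(x,t+1)=u^{\theta(1)\omega}(x,t)=u^{\omega}(x,t)$.

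I expect the main obstacle to be the measurability step (ii): reducing the infimum over the infinite-dimensional curve space to a countable measurable operation requires a density argument that is genuinely compatible with the measurable dependence of $L$ on $\omega$, and the passage from the continuous-parameter $\liminf$ to a countable one must be justified by equicontinuity in $\lambda$ rather than assumed. The analytic input for (i) is classical and the computations for (iii) are routine once the invariance $\alpha(\theta(s)\omega)=\alpha(\omega)$ is in hand.
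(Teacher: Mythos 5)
Your overall architecture coincides with the paper's (countable reductions for measurability as in its Lemma 6.3, the covariance identities of its Lemma 6.4 for part (iii), and the classical sub/supersolution argument of its Theorem 6.1 for part (i)); indeed your piecewise-geodesic family with rational break times is a cleaner, manifestly $\omega$-independent countable reduction than the paper's Whitney-embedding/Stone--Weierstrass separability argument. But there is one genuine gap: the equi-boundedness of $\{T^{\omega}_{\lambda}u\}_{\lambda\ge 1}$ does \emph{not} follow from ``superlinearity and compactness of $M$''. Those hypotheses are insensitive to the additive normalization: replace $\alpha(\omega)$ by any constant $c$ in the definition of $T^{\omega}_{\lambda}$ and superlinearity and compactness hold unchanged, yet $T^{\omega}_{\lambda}u$ drifts linearly like $\lambda\bigl(c-\alpha(\omega)\bigr)$. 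Boundedness holds precisely because $\alpha(\omega)$ is the Ma\~ne critical value, and this requires proof. The paper devotes Lemma 6.2 to exactly this point: setting $M_n(\omega)=\max_{x}T^{\omega}_{n}(0)(x,t)$ and $m_n(\omega)=\min_{x}T^{\omega}_{n}(0)(x,t)$, equi-semiconcavity (Lemma 5.1) gives $M_n(\omega)-m_n(\omega)\le K(\omega)$, Fekete subadditivity produces the limit $-\beta(\omega)$ of $M_n(\omega)/n$, and the closed-curve characterization of the critical value (Proposition 4.1) supplies almost-critical closed curves forcing $\beta(\omega)=0$, whence $\min_M u-K(\omega)\le T^{\omega}_{n}u(x,t)\le \max_M u+K(\omega)$. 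Without this step your $\liminf$ could a priori be $\pm\infty$ and assertion (i) would have no content; this is the one missing idea in your proposal.

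Two smaller points. First, the Lipschitz dependence of $T^{\omega}_{\lambda}u(x,t)$ on $\lambda$, which you correctly identify as the hinge of the rational-$\lambda$ reduction in (ii), is flagged but never established; the paper's Lemma 6.1 obtains it with constant $|f(\omega)|+|\alpha(\omega)|$ by extending test curves by the constant path at the foot point $y$, and you should include this short construction rather than defer it. Second, in the supersolution direction your ``diagonal argument'' compresses the one delicate step: since $u^{\omega}$ is a $\liminf$ and not a limit, the dynamic programming identity $u^{\omega}(x,t)=\min_{y\in M}\{u^{\omega}(y,s)+A^{\omega}(s,y;t,x)+(t-s)\alpha(\omega)\}$ (the paper's Lemma 6.4, formula 2) needs a two-sided argument in which the foot points $q_k$ of minimizers along a $\liminf$-realizing sequence $\lambda_k$ converge to some $q$, and the equi-Lipschitz bound in the space variable is used to pass from $T^{\omega}_{\lambda_k-(t-s)}u(q_k,s)$ to $u^{\omega}(q,s)$; this must be spelled out before one can say that ``calibration turns the domination inequality into an equality.'' Parts (iii) and the remainder of (ii) are correct and essentially identical to the paper's treatment, including your route to periodicity via $L(\cdot,\cdot,t,\theta(1)\omega)=L(\cdot,\cdot,t+1,\omega)=L(\cdot,\cdot,t,\omega)$.
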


In this paper, we define the global minimizer in this way: $\gamma^{\omega}:(-\infty,\infty) \to M$ is a global minimizer if and only if for any $s<t$,  $\gamma^{\omega}|[s,t]$ is minimizer for the Action $A^{\omega}(x,s;y,t)$.

 We prove that the set of global minimzer is nonempty for all $\omega \in \Omega$. Besides, under some situations, we can know the structure of ergodic invariant measures on the space of global minimizers.

$\mathbf{Acknowledgements}$ I thank Piermarco Cannarsa and Alfonso Sorrentino for their generous help. They spent much time talking with me for this project and they provided some insightful advice and helpful references.

\section{Stochastic Time-Periodic Tonelli Lagrangian}

In this section, we recall some basic notions, give the definiton of our investigated objects and examples. Throughout this paper, $(M,g)$ is a compact connected Riemannian manifold and $TM$ is its tangent bundle. John Mather originally considered the Time-Periodic Tonelli Lagrangian.
\begin{definition}[Time-Periodic Tonelli Lagrangian]
	On $M$,  a $C^2$ map $L:TM \times R \to R$ is a Time-Periodic Tonelli Lagrangian if $L$ satisfies:
	
	(I)Periodicity: $L(x,v,t+1)=L(x,v,t)$,  $\forall x \in M$,$v \in T_{x}M$ $t \in R$.
	
	(II)Convexity: For all $x \in M$,$v \in T_xM$, the Hessian matrix $\frac{\partial^2 L}{\partial v_i \partial v_j}(x,v,t)$ is positive definite.
	
	(III)Superlinearity:
	\[
	\lim_{||v|| \to +\infty}\frac{L(x,v,t)}{||v||}=+\infty
	\]
	
	uniformly on $x \in M$,$t \in R$.
	
	(IV)Completeness: The maximal solutions of the Euler-Lagrangian, that in local coordinates is:
	\[
	\frac{d}{dt}\frac{\partial L}{\partial v}(x,\dot{x},t)=\frac{\partial L}{\partial x}(x,\dot{x},t)
	\]
	
	are defined on all $R$.
\end{definition}

Following this definition from John Mather, we introduce the definition of Stochastic Time-Periodic Tonelli Lagrangian. $(\Omega,\mathscr{F},\mathbb{P})$ is a probability space, $\mathscr{F}$ is the  $\sigma-$algebra of $\Omega$.

\begin{definition}[Stochastic Time-Periodic Tonelli Lagrangian]
	On $M$,a map $L:TM \times R \times \Omega \to R$ is a measurable function. It is called Stochastic Time-Periodic Tonelli Lagrangian if $L$ satisfies:
	
	(I)Fix each $\omega \in \Omega$ ,  $L(\cdot,\cdot,\cdot,\Omega)$ is a Time-Periodic Tonelli Lagrangian on $TM \times T$.
	
	(II)Fix each $(x,v,t) \in TM \times T$, $L(x,v,t,\cdot)$ is a measurable function on $\Omega$.
\end{definition}

Next, we introduce a skew-product dynamical system. Let $\mathscr{B}$ be the Borel algebra of $\mathbb{R}$.Then the $\sigma-$algebra of $\mathbb{R}\times\Omega$ is given by $\mathscr{B}\otimes \mathscr{F}$.
\begin{definition}
	A skew-product dynamical system is a measurable map $\theta$: $R \times \Omega \to \Omega$,satisfying
	
	(I) $\forall x \in \Omega$,$\forall s,t \in R$, $\theta(0,x)=x$, $\theta(s,\theta(t,x))=\theta(s+t,x)$.
	
	(II)Fix  $t \in R$,  $\theta(t): \Omega \to \Omega$ defined as $\theta(t)(x)=\theta(t,x)$, is measure-preserving, i.e $\forall E \in \mathscr{F}$,  $\mathbb{P}({\theta(t)}^{-1}(E))=\mathbb{P}(E)$.
\end{definition}
From Definition 2.3, $ { \{\theta(t)\} }_{t \in \mathbb{R}} $ is a group of measure-preserving endomorphism of $\Omega$. In this paper, we consider the a class of Stochastic Time-Periodic Lagrangians which match the skew-product dynamical system, i.e. satisfying the following assumption:

\begin{assumption}
	$L:TM \times T \times \Omega \to R$ is a Stochastic Time-Periodic Tonelli Lagrangian, we assume that $L$ matches the skew-product dynamical system, if for $(x,v) \in TM$, $s \in R$, $\omega \in \Omega$, we have 
	\[
	L(x,v,t+s,\omega)=L(x,v,t,\theta(s)\omega)
	\]
\end{assumption}

The next two examples are constructed to show existence of Lagrangians which satisfy Assumption 2.1

\begin{example}
	$\Omega=[0,1]$ is a  probability space with Lebesgue measure.
	$f:[0,1] \to [0,1]$ is a one-to-one measurable function. Precisely,
	\[
	f(x) =
	\begin{cases}
	x & \text{if $0 \le x < \frac{1}{3}$,}\\
	x+\frac{1}{3} & \text{if $\frac{1}{3} \le x < \frac{2}{3}$,}\\
	x-\frac{1}{3} & \text{if $\frac{2}{3} \le x <1$.}
	\end{cases}
	\]
	
	$\theta: R \times \Omega \to \Omega$, $t \in R$, is defined by
	\[
	\theta(t,\omega)=f^{-1}(t+f(\omega))
	\]
	We see that $\theta:R \times \Omega \to \Omega$ is a skew-product dynamical system.
	Then define $L:TM \times T \times \Omega \to R$ by 
	\[
	L(x,v,t,\omega)=\frac{1}{2}g_{x}(v,v)+h(t+f(\omega))
	\]
	where $g_x$ is a Riemannian metric over $M$, $h$ is a periodic function from $R$ to $R$, with period $1$, $C^2$ regularity. $L$ is a Stochastic Time-Periodic Tonelli Lagrangian which satisfies Assumption 2.1
\end{example}

\begin{example}
	$\Omega=T^d$ is a  probability space with Lebesgue Measure. $T^d$ can be decomposed as 
	\[
	T^d=\prod_{k=1}^{d}([0,\frac{1}{n})\bigcup [\frac{1}{n},\frac{2}{n})\cdot\cdot\cdot\bigcup[\frac{n-1}{n},1))
	\]	
	
	Assume that $f$ is a permutation of these $n^d$ cubics by linear maps, so $f$ is a one-to-one measurable map from $T^d$ to $T^d$. Choose $(1,\alpha_2,..\alpha_d) \in R^d$ as a rationally independent vectors, i.e, there is no nonzero vector $(k_1,k_2,...k_d) \in Z^d$ such that $k_1+k_2\alpha_2+k_3\alpha_3+\cdot \cdot\cdot +k_d\alpha_d=0$
	
	Define $\phi(t):\Omega \to \Omega$  for $\forall x\in T^d$, $x=(x_1,...,x_d) \in R^d$,$t\in R$,  by
	\[
	\phi(t)(x_1,...,x_d)=(x_1+t,x_2+\alpha_2 t,..,x_d+\alpha_d t)(mod Z^d)
	\]
	For $t \in R$,  $\phi(t)$ is ergodic over $\Omega$ with stationary property: $\phi(t)\cdot\phi(s)=\phi(t+s)$
	
	Define the skew-product dynamical system  $\theta: R \times \Omega \to \Omega$, for each $t \in R$,  each $\omega \in \Omega$,
	\[
	\theta(t,x)=f^{-1}(\phi(t)\cdot (f(x))))
	\]
	
	In particular, $\{\theta(t)|t \in R\}$ is ergodic on $\Omega$.
	
	Define $L:TM \times T \times \Omega$ as follows:
	\[
	L(x,v,t,\omega)=\frac{1}{2}g_x(v,v)+h(t+\pi(\omega))
	\]
	where  $\pi:T^d \to R$ defined as $\pi(x_1,x_2,...x_d)=x_1$, $h$ is a 1-periodic function on $R$ and has regularity $C^2$, $g_x$ is the Riemannian Metric over $M$.
	Hence, $L$ is a Stochastic Time-Periodic Tonelli Lagrangian and satisfies the Assumption 2.1 For the translation on Torus, see \cite{katok1997introduction}.
\end{example}

\begin{remark}
	In Example 1, Example 2, the one-to-one measurable map $f$ is not unique.
\end{remark}

\section{Probability Measures on Metric Spaces}

In this section, we introduce some tools from Probability Theory.

$(X,d)$ is a metric space.Consider the functional space. $C_b(X)=\{ f: X \to R: f $ is continuous and bounded $\}$. Each $f \in C_b(X)$ is integrable with respect to any finite Borel measure on $X$. We introduce the notion of weak convergence of probability measure on $X$.
\begin{definition}
	$\mu,\mu_1,\mu_2,...$ are finite Borel measures on $X$. We say that $(\mu_i)_i$ converges weakly to $\mu$, if for all $f \in C_b(X)$, we have
	\[
	\lim_{i \to \infty} \int_{X}f d\mu_i \to \int_{X}f d\mu 
	\]
	we denote $\mu_i \rightharpoonup \mu$.
\end{definition}

Next, we discuss the Prokhorov metric on $(X,d)$. Denote $\mathcal{P}=\{\mu| \mu$ is a probability measure on $X \}$. $\mathcal{B}(X)$ is the set of all Borel algebra generated by open sets on $X$. The Prokhorov metric arises from the distance on $\mathcal{P}$, defined as
\begin{definition}
	For $\mu,\nu \in \mathcal{P}$, $d_p(\mu,\nu)$ is defined by
	\[
	d_P(\mu,\nu)=\inf\{\alpha>0: \mu(A) \le \nu(A_{\alpha}) +\alpha, \nu(A) \le \mu(A_{\alpha})+\alpha, \forall A \in \mathcal{B}(X) \}
	\]
	
	where $A_{\alpha}=\{x \in X: d(x,A)< \alpha \}$ if $A \neq \emptyset$, $\emptyset_{\alpha}:=\emptyset$ for all $\alpha > 0$.
\end{definition}

Then we have:
\begin{lemma}
	$(X,d)$ is the metric space, $d_{P}$ defined above,
	
	(1)$d_P$ is a metric on $\mathcal{P}=\mathcal{P}(X)$.
	
	(2)If $\mu, \mu_1, \mu_2,...\in \mathcal{P}$, $\lim_{i \to \infty}d_P(\mu_i, \mu)=0$ implies $\mu_i \rightharpoonup \mu$.
\end{lemma}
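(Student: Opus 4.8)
The plan is to treat the two parts separately, proving the metric axioms for part (1) and then deriving weak convergence in part (2) from a Portmanteau-type inequality on closed sets.

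For part (1), non-negativity is immediate since $d_P$ is an infimum over positive $\alpha$, and symmetry holds because the two defining inequalities $\mu(A)\le\nu(A_\alpha)+\alpha$ and $\nu(A)\le\mu(A_\alpha)+\alpha$ enter symmetrically. To show that $d_P(\mu,\nu)=0$ forces $\mu=\nu$, I would fix a closed set $F$ and observe that $d_P(\mu,\nu)=0$ gives $\mu(F)\le\nu(F_\alpha)+\alpha$ for every $\alpha>0$; since $F$ is closed, $F_\alpha\downarrow F$ as $\alpha\downarrow 0$, so continuity from above of the finite measure $\nu$ yields $\mu(F)\le\nu(F)$, and symmetrically $\nu(F)\le\mu(F)$. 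Thus $\mu$ and $\nu$ agree on all closed sets; as the closed sets form a $\pi$-system generating $\mathcal{B}(X)$, a monotone-class (Dynkin) argument extends the equality to all Borel sets. The heart of part (1) is the triangle inequality: given $d_P(\mu,\rho)<\alpha$ and $d_P(\rho,\nu)<\beta$, I would apply the first estimate to a Borel set $A$ to get $\mu(A)\le\rho(A_\alpha)+\alpha$, then the second to the set $A_\alpha$ to get $\rho(A_\alpha)\le\nu((A_\alpha)_\beta)+\beta$; the key set-theoretic inclusion $(A_\alpha)_\beta\subseteq A_{\alpha+\beta}$, which follows from the triangle inequality for $d$, then gives $\mu(A)\le\nu(A_{\alpha+\beta})+(\alpha+\beta)$, and the symmetric bound holds as well, so $d_P(\mu,\nu)\le\alpha+\beta$; taking infima over admissible $\alpha,\beta$ completes the argument.

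For part (2), I would first extract from $d_P(\mu_i,\mu)\to 0$ the closed-set estimate $\limsup_i\mu_i(F)\le\mu(F)$ for every closed $F$: for fixed $\alpha>0$ we have $\mu_i(F)\le\mu(F_\alpha)+\alpha$ for all large $i$, hence $\limsup_i\mu_i(F)\le\mu(F_\alpha)+\alpha$, and letting $\alpha\downarrow 0$ with $F_\alpha\downarrow F$ (closedness) gives the claim. To convert this into weak convergence against an arbitrary $f\in C_b(X)$, it suffices after an affine normalization to treat $0\le f\le 1$. I would use the layer-cake representation $\int f\,d\mu=\int_0^1\mu(\{f>t\})\,dt$, note that $\{f\ge t\}$ is closed, and apply reverse Fatou (legitimate since the integrands are bounded by $1$ on $[0,1]$) together with the closed-set estimate to obtain $\limsup_i\int f\,d\mu_i\le\int_0^1\mu(\{f\ge t\})\,dt=\int f\,d\mu$; applying the same bound to $1-f$ yields $\liminf_i\int f\,d\mu_i\ge\int f\,d\mu$, and the two inequalities give $\int f\,d\mu_i\to\int f\,d\mu$.

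The main obstacle is the passage in part (2) from the defining $d_P$-inequalities to control of integrals, that is, the Portmanteau-type reduction: the delicate points are justifying the interchange of $\limsup$ with the $dt$-integral via reverse Fatou and handling the discrepancy between the open level sets $\{f>t\}$ and the closed level sets $\{f\ge t\}$, which coincide in $\mu$-measure for all but countably many $t$. The set inclusion $(A_\alpha)_\beta\subseteq A_{\alpha+\beta}$ underpinning the triangle inequality is routine but should be verified carefully from the metric triangle inequality for $d$.
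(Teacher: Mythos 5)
Your proposal is correct, and since the paper offers no proof of this lemma --- it simply defers to the cited reference \cite{billingsley2013convergence} --- your argument is in effect the standard one from that source: metric axioms via the inclusion $(A_\alpha)_\beta \subseteq A_{\alpha+\beta}$ and continuity from above on closed sets, then the Portmanteau closed-set estimate combined with the layer-cake formula and reverse Fatou (with the open/closed level sets $\{f>t\}$ and $\{f\ge t\}$ agreeing in $\mu$-measure for all but countably many $t$). All the delicate steps you flag are handled correctly, including that part (2) needs no separability, which is consistent with the paper reserving that hypothesis for the converse in Lemma 3.2.
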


\begin{lemma}
	If $(X,d)$ is a separable metric space, then for any $\mu,\mu_1,\mu_2,...\in \mathcal{P}(X)$ one has 
	$\mu_i \rightharpoonup \mu$ if and only if $\lim_{i \to \infty}d_P(\mu,\mu_i)=0$.
\end{lemma}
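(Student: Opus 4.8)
\emph{Proof proposal.}

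One implication is already in hand: Lemma 3.1(2) states that $\lim_{i\to\infty} d_P(\mu,\mu_i)=0$ forces $\mu_i \rightharpoonup \mu$, and this does not require separability. So the entire content of Lemma 3.2 is the converse: on a separable $(X,d)$, weak convergence $\mu_i \rightharpoonup \mu$ implies $d_P(\mu,\mu_i)\to 0$. My plan is to reduce this to the Portmanteau characterization of weak convergence (which I would first record as an auxiliary lemma): $\mu_i \rightharpoonup \mu$ if and only if $\limsup_i \mu_i(F)\le \mu(F)$ for every closed $F$, equivalently $\liminf_i \mu_i(G)\ge \mu(G)$ for every open $G$; in particular $\mu_i(E)\to\mu(E)$ for every Borel \emph{continuity set} $E$ (one with $\mu(\partial E)=0$). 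The difficulty is that the Prokhorov distance demands the two inequalities $\mu(A)\le \mu_i(A_\alpha)+\alpha$ and $\mu_i(A)\le \mu(A_\alpha)+\alpha$ \emph{simultaneously for all Borel $A$}, i.e. uniformly over an uncountable family; weak convergence by itself only controls individual sets. Separability is exactly what lets me replace this uncountable family by a finite one.

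Fix $\varepsilon>0$. Using a countable dense subset of $X$, I would cover $X$ by countably many open balls of radius $<\varepsilon/2$ (hence diameter $<\varepsilon$), and, crucially, choose the radii so that each ball is a $\mu$-continuity set: for a fixed center the spheres $\{d(\cdot,x)=r\}$ are disjoint over $r$, so at most countably many radii carry positive $\mu$-mass, and an admissible radius $<\varepsilon/2$ can always be selected. Disjointifying these balls gives a countable Borel partition $\{A_j\}_{j\ge 1}$ with $\operatorname{diam}A_j<\varepsilon$ and $\mu(\partial A_j)=0$ for every $j$. Since $\mu$ is a probability measure I may pick $k$ with $\mu\!\big(\bigcup_{j>k}A_j\big)<\varepsilon$. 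Let $\mathcal{K}$ be the \emph{finite} family of the $2^{k}$ sets $E_S=\bigcup_{j\in S}A_j$, $S\subseteq\{1,\dots,k\}$; each $E_S$ is again a $\mu$-continuity set because $\partial E_S\subseteq\bigcup_{j\le k}\partial A_j$ is $\mu$-null. By Portmanteau and finiteness of $\mathcal{K}$, there is an $N$ such that $\lvert \mu_i(E_S)-\mu(E_S)\rvert<\varepsilon$ for all $i\ge N$ and all $S\subseteq\{1,\dots,k\}$. This is the step that converts weak convergence into the uniform numerical control I need; I regard constructing the small-diameter continuity partition as the main obstacle, everything after it being bookkeeping.

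Finally I would run the sandwich argument. Given an arbitrary Borel $A$, set $S=\{j\le k: A_j\cap A\neq\emptyset\}$ and $E=E_S$. Because each $A_j$ with $j\in S$ meets $A$ and has diameter $<\varepsilon$, one has $E\subseteq A_\varepsilon$; also $A\cap\bigcup_{j\le k}A_j\subseteq E$, whence $\mu(A)\le\mu(E)+\varepsilon$ and $\mu_i(A)\le\mu_i(E)+\mu_i\!\big(\bigcup_{j>k}A_j\big)$. For $i\ge N$ the continuity-set estimate gives $\mu_i\!\big(\bigcup_{j>k}A_j\big)<2\varepsilon$ (since $\mu_i(\bigcup_{j\le k}A_j)>\mu(\bigcup_{j\le k}A_j)-\varepsilon>1-2\varepsilon$). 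Combining these with $\mu(E)\le\mu_i(E)+\varepsilon\le\mu_i(A_\varepsilon)+\varepsilon$ and with $\mu(E)\le\mu(A_\varepsilon)$ yields, for every Borel $A$ and every $i\ge N$, the two bounds $\mu(A)\le \mu_i(A_\varepsilon)+3\varepsilon$ and $\mu_i(A)\le \mu(A_\varepsilon)+3\varepsilon$. Hence $d_P(\mu,\mu_i)\le 3\varepsilon$ for $i\ge N$, and as $\varepsilon>0$ was arbitrary, $\lim_{i\to\infty}d_P(\mu,\mu_i)=0$. Together with Lemma 3.1(2) this establishes the stated equivalence.
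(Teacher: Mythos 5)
Your argument is correct, and it is essentially the classical small-diameter continuity-set partition proof of the weak-convergence/Prokhorov equivalence found in Billingsley — which is precisely the route the paper takes, since it gives no proof of Lemmas 3.1--3.4 and simply defers to \cite{billingsley2013convergence}. One cosmetic repair: when choosing the continuity radii, pick them in $(\varepsilon/4,\varepsilon/2)$ rather than merely $<\varepsilon/2$, so that the balls around the countable dense set of centers are guaranteed to cover $X$; with that reading every step, including the final $3\varepsilon$ bookkeeping, checks out.
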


\begin{lemma}
	$(X,d)$ is a separable metric space, then $\mathcal{P}=\mathcal{P}(X)$ with the Prokhorov metric $d_P$ is separable. 
\end{lemma}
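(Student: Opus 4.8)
The plan is to produce an explicit countable subset of $\mathcal{P}(X)$ and show that it is $d_P$-dense. Since $X$ is separable, fix a countable dense set $D=\{x_1,x_2,\dots\}\subseteq X$, and let $\mathcal{D}$ be the collection of all finitely supported probability measures of the form $\sum_{i=1}^{n} q_i\,\delta_{x_i}$ with $n\in\mathbb{N}$, each $x_i\in D$, and each $q_i$ a nonnegative rational satisfying $\sum_i q_i=1$. The set $\mathcal{D}$ is a countable union (over $n$) of images of the rational points of the standard simplex under the map $(q_1,\dots,q_n)\mapsto\sum q_i\delta_{x_i}$, hence countable. It then remains to show that for every $\mu\in\mathcal{P}(X)$ and every $\varepsilon>0$ there is an element of $\mathcal{D}$ within Prokhorov distance $\varepsilon$ of $\mu$.

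The core estimate I would isolate first is a perturbation lemma: if $T:X\to X$ is Borel and $d(x,T(x))<\alpha$ for all $x$, then $d_P(\mu,T_{\ast}\mu)\le\alpha$. Indeed, for any Borel $A$ the inclusion $T^{-1}(A)\subseteq A_{\alpha}$ holds (if $T(x)\in A$ then $d(x,A)\le d(x,T(x))<\alpha$), so $T_{\ast}\mu(A)=\mu(T^{-1}(A))\le\mu(A_{\alpha})\le\mu(A_{\alpha})+\alpha$, and symmetrically with the roles of $\mu$ and $T_{\ast}\mu$ reversed; the definition of $d_P$ then yields the bound. To apply this, cover $X$ by the balls $B(x_i,\alpha)$, $x_i\in D$ (possible by density), disjointify them into Borel sets $B_i=B(x_i,\alpha)\setminus\bigcup_{j<i}B(x_j,\alpha)$, and define $T$ by $T\equiv x_i$ on $B_i$. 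Then $d(x,T(x))<\alpha$ everywhere and $T_{\ast}\mu=\sum_i \mu(B_i)\,\delta_{x_i}$, a discrete measure carried by $D$ and within Prokhorov distance $\alpha$ of $\mu$.

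Two further approximations reduce $T_{\ast}\mu$ to an element of $\mathcal{D}$, and here I would use that $d_P(\nu_1,\nu_2)\le\|\nu_1-\nu_2\|_{TV}$: if the total variation is at most $\delta$, then for any Borel $A$ one has $\nu_1(A)\le\nu_2(A)+\delta\le\nu_2(A_{\delta})+\delta$ (since $A\subseteq A_{\delta}$), giving $d_P\le\delta$. First I truncate the possibly infinite sum: choose $N$ with $\sum_{i>N}\mu(B_i)<\delta$ and reassign that tail mass to $x_1$, which changes the measure by at most $\delta$ in total variation. Then I replace each of the finitely many real weights by a nearby nonnegative rational, renormalizing so the weights still sum to $1$, again at a total-variation cost below $\delta$. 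Choosing $\alpha$ and $\delta$ each smaller than $\varepsilon/3$ and combining the three estimates by the triangle inequality produces an element of $\mathcal{D}$ within $\varepsilon$ of $\mu$.

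The disjointification and the rational approximation of finitely many weights are routine. The step I expect to demand the most care is the perturbation lemma together with its application: one must verify the neighborhood inclusion $T^{-1}(A)\subseteq A_{\alpha}$ cleanly against the strict definition of $A_{\alpha}$ and control the tail of the countable partition. It is precisely separability — guaranteeing both the countable dense set $D$ and the countable cover by small balls — that keeps the entire construction countable, so this hypothesis is used in an essential way.
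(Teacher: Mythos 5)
Your proof is correct, and it is essentially the standard argument: the paper offers no proof of this lemma, deferring to Billingsley's \emph{Convergence of Probability Measures}, where separability of $\mathcal{P}(X)$ is established in exactly this way --- finitely supported measures with rational weights on a countable dense set, compared to $\mu$ via a partition of $X$ into small Borel pieces. Your packaging of the key estimate as a perturbation lemma for the pushforward $T_{\ast}\mu$ (with both inclusions $T^{-1}(A)\subseteq A_{\alpha}$ and $A\subseteq T^{-1}(A_{\alpha})$ checked, as the symmetric definition of $d_P$ requires) is a clean equivalent of the textbook computation, and the truncation and rationalization steps via the total-variation bound are sound.
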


\begin{lemma}
    	$(X,d)$ is a separable complete metric space, then $\mathcal{P}=\mathcal{P}(X)$ with the Prokhorov metric $d_P$ is complete. 
\end{lemma}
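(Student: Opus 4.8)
The plan is to show that every $d_P$-Cauchy sequence $(\mu_n)_n \subseteq \mathcal{P}(X)$ converges in $\mathcal{P}(X)$, and the heart of the argument is to prove that such a sequence is \emph{tight}; once tightness is in hand, Prokhorov's theorem together with a standard Cauchy-subsequence argument will finish the proof. Throughout I use that $X$ is separable and complete, i.e.\ Polish.

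First I would establish a uniform covering claim: for every $\epsilon > 0$ and every radius $r > 0$ there are finitely many open balls $B_1,\dots,B_m$ of radius $r$ with $\mu_n\!\left(\bigcup_{i=1}^m B_i\right) > 1 - \epsilon$ for all $n$. To see this, use the Cauchy property to pick $N$ with $d_P(\mu_N,\mu_n) < \alpha$ for all $n \ge N$, where $\alpha < \min\{\epsilon, r/2\}$. Since $X$ is Polish, each of the finitely many measures $\mu_1,\dots,\mu_N$ is tight (Ulam's theorem), so there is a compact set $K$ with $\mu_j(K) > 1-\epsilon$ for $j \le N$; covering $K$ by finitely many balls $B(y_1,r/2),\dots,B(y_m,r/2)$ gives the claim for the indices $j \le N$. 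For $n > N$, the definition of $d_P$ yields $\mu_N(K) \le \mu_n(K_\alpha) + \alpha$, hence $\mu_n(K_\alpha) > 1 - \epsilon - \alpha$; and because $\alpha < r/2$ the enlargement satisfies $K_\alpha \subseteq \bigcup_i B(y_i, r/2+\alpha) \subseteq \bigcup_i B(y_i, r)$, which transfers the lower bound (with $\epsilon$ replaced by $2\epsilon$) to all $n > N$.

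Next I would upgrade this to genuine tightness. Applying the covering claim with $r = 1/k$ and error $\epsilon/2^k$ produces, for each $k$, an open set $A_k$ that is a finite union of balls of radius $1/k$ and satisfies $\mu_n(A_k) > 1 - \epsilon/2^k$ for all $n$. Setting $K := \overline{\bigcap_{k} A_k}$, the set $K$ is closed and totally bounded (given $r>0$, choose $k > 1/r$, so $K \subseteq \overline{A_k}$ is covered by finitely many balls of radius $1/k < r$), hence compact because $X$ is complete; moreover $\mu_n(K) \ge 1 - \sum_k \epsilon/2^k = 1 - \epsilon$ for every $n$. Thus $(\mu_n)_n$ is tight.

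Finally, by Prokhorov's theorem (the direction that tightness implies sequential relative compactness in the weak topology) there is a subsequence $\mu_{n_j} \rightharpoonup \mu$ for some $\mu \in \mathcal{P}(X)$. Since $X$ is separable, Lemma 3.2 converts this into $d_P(\mu_{n_j}, \mu) \to 0$, and then the triangle inequality $d_P(\mu_n,\mu) \le d_P(\mu_n,\mu_{n_j}) + d_P(\mu_{n_j},\mu)$ combined with the Cauchy property forces $d_P(\mu_n,\mu) \to 0$. Hence $(\mathcal{P}(X), d_P)$ is complete. I expect the main obstacle to be the tightness step: it is precisely there that both completeness (to promote closed-and-totally-bounded to compact) and separability (through Ulam's theorem and the ball coverings) are used, and the transfer of the measure lower bounds across the sequence via the Prokhorov inequalities requires the careful bookkeeping of the radii $r/2$, $\alpha$, and $r$ described above.
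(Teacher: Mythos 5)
Your proof is correct: the Cauchy-implies-tight argument via Ulam's theorem and the Prokhorov-metric inequality $\mu_N(K)\le\mu_n(K_\alpha)+\alpha$, followed by Prokhorov's theorem and the Cauchy-plus-convergent-subsequence step using Lemma 3.2, is exactly the standard route. The paper itself gives no proof of this lemma---it defers Lemmas 3.1--3.4 to Billingsley's \emph{Convergence of Probability Measures}---and your argument is essentially the one found there, so nothing needs reconciling beyond trivial bookkeeping (your acknowledged $2\epsilon$, and taking $k>2/r$ so that $\overline{A_k}$, covered by closed balls of radius $1/k$, lies in finitely many open balls of radius $r$).
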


The proof from Lemma 3.1 to Lemma 3.4 can be checked in \cite{billingsley2013convergence}

\begin{theorem}
	Let $X$ and $Y$ be two Polish spaces and $\lambda$ be a Borel probability measure on $X\times Y$.Let us set $\mu=\pi_{X}\lambda$, where $\pi_{X}$ is the standard projection from $X \times Y$ onto $X$. Then there exists a $\mu-$almost everywhere uniquely determined family of Borel probability measures $(\lambda_x)$ on $Y$ such that 
	
	1.The function $x \to \lambda_x$ is Borel measurable, in the sense that $x \to \lambda_x(B)$ is a Borel-measurable function for each Borel-measurable set $B \in Y$.
	
	2. For every Borel-measurable function $f:X \times Y \to [0,\infty)$,
	\[
	\int_{X\times Y}f(x,y)d\lambda(x,y)=\int_{X}\int_{Y}f(x,y)d\lambda_x(y)d\nu(x)
	\]
\end{theorem}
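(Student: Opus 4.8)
The plan is to build the conditional measures $\lambda_x$ by a Radon--Nikodym argument applied to a countable family of test functions, and then to assemble the resulting densities into genuine Borel probability measures via the Riesz representation theorem. Since $Y$ is Polish, I would first realize it as a Borel subset of a compact metric space $K$ (the Hilbert cube), push $\lambda$ forward, and fix a countable $\mathbb{Q}$-linear subspace $\mathcal{D}\subset C(K)$ that is dense for the uniform norm, contains the constants, and separates points. For each $g\in\mathcal{D}$ define a finite signed measure on $X$ by $\nu_g(A)=\int_{A\times K}g(y)\,d\lambda(x,y)$. Because $|\nu_g(A)|\le\|g\|_\infty\,\mu(A)$ with $\mu=\pi_X\lambda$, each $\nu_g$ is absolutely continuous with respect to $\mu$, so Radon--Nikodym furnishes a density $h_g\in L^1(X,\mu)$ with $\nu_g(A)=\int_A h_g\,d\mu$. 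The function $h_g$ is the candidate for $x\mapsto\int_K g\,d\lambda_x$.

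Next I would record the almost-everywhere algebraic properties of $g\mapsto h_g$: $\mathbb{Q}$-linearity, positivity ($g\ge 0\Rightarrow h_g\ge 0$ $\mu$-a.e.), the normalization $h_1=1$ $\mu$-a.e., and the bound $|h_g|\le\|g\|_\infty$ $\mu$-a.e. Each holds off a $\mu$-null set depending on $g$ (or on a pair $g,g'$); since $\mathcal{D}$ is countable, the union $N$ of all these exceptional sets is still $\mu$-null. For $x\notin N$ the assignment $g\mapsto h_g(x)$ is a positive, normalized, $\|\cdot\|_\infty$-bounded $\mathbb{Q}$-linear functional on $\mathcal{D}$, hence extends uniquely to a positive normalized linear functional on $C(K)$; by Riesz representation it is integration against a Borel probability measure $\lambda_x$ on $K$. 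For $x\in N$ I set $\lambda_x$ to be a fixed Dirac mass. A final check, using that $\lambda$ is carried by $X\times Y$ and that $Y$ is Borel in $K$, shows $\lambda_x(Y)=1$ for $\mu$-a.e. $x$, so the $\lambda_x$ are genuinely measures on $Y$.

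The remaining work is verification. Measurability of $x\mapsto\lambda_x(B)$ holds for $B=Y$ and is inherited first for $g\in\mathcal{D}$ (where $\int g\,d\lambda_x=h_g(x)$ is measurable by construction), then for all $g\in C(K)$ by uniform approximation, and finally for open sets and hence all Borel sets by a monotone class argument. The identity $\int_{X\times Y}f\,d\lambda=\int_X\int_Y f\,d\lambda_x\,d\mu(x)$ is true by definition of $h_g$ for $f(x,y)=\mathbf{1}_A(x)g(y)$ with $g\in\mathcal{D}$, extends to products $\mathbf{1}_A(x)\mathbf{1}_B(y)$, then to all bounded Borel $f$ by the functional monotone class theorem, and then to nonnegative Borel $f$ by monotone convergence. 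Uniqueness follows since any two disintegrations agree $\mu$-a.e. when tested against each $g$ in the countable separating family $\mathcal{D}$; intersecting the countably many full-measure sets yields one full-measure set on which the measures coincide on $\mathcal{D}$, and a point-separating family determines the measure.

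The main obstacle is exactly the passage from a family of pointwise-a.e. statements (one null set per test function) to a single null set off which $g\mapsto h_g(x)$ is simultaneously linear, positive, and bounded for every $g$. This is what forces the restriction to a countable family $\mathcal{D}$ and the use of its uniform density: countability keeps the union of null sets null, while density and point-separation let Riesz recover a bona fide probability measure from data prescribed only on $\mathcal{D}$. The Polish hypothesis enters precisely here, guaranteeing both the existence of such a countable $\mathcal{D}$ and, through the embedding into the compact metric space $K$, the clean applicability of Riesz representation; working on $K$ rather than directly on the possibly non-locally-compact $Y$ is what sidesteps the tightness subtleties that would otherwise arise.
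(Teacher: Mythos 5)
Your proposal is correct, and it fills in a proof where the paper offers none: the paper states this disintegration theorem without proof, deferring to the cited reference \cite{ambrosio2008transport}. Your argument --- embedding $Y$ in the Hilbert cube, producing densities $h_g$ via Radon--Nikodym for a countable dense $\mathbb{Q}$-linear family of test functions, discarding a single null set, and recovering $\lambda_x$ through Riesz representation --- is precisely the standard proof found in that literature, so there is nothing to compare beyond noting that you should fix Borel (not merely $L^1$) versions of the $h_g$, which your countability argument already accommodates.
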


The disintegration of measure can be seen in \cite{ambrosio2008transport}

\section{ Mane-Critical Value}
Given a Time-Periodic Tonelli Lagrangian $L:TM \times T \to R$, by Euler-Lagrange equation,i.e,
\[
\frac{d}{dt}\frac{dL}{d\dot{x}}(x,\dot{x},t)=\frac{dL}{dx}(x,\dot{x},t)
\]
we can define a time-dependent Lagrangian flow $\Phi_{s,t}:TM \times \{s\} \to TM \times \{t\}$.see \cite{mather1991action}

Denote: $\mathcal{M}_{inv}=\{\mu: \mu$ is a Borel probability over $TM \times T$, $\mu $ is invariant under the flow $\Phi_{s,t}, \forall s,t \in R \}$, the Mane-critical value of $L$ is defined as 
\begin{equation}
-c[0]=\min_{\mu \in \mathcal{M}_{inv} }\int_{TM \times T}L(x,v,t)\,d\mu 
\end{equation}

The corresponding Hamiltonian $H: T^{*}M \times T \to R$ is defined by 
\[
H(x,p,t)=\sup_{v \in TM}p(v)-L(x,v,t)
\]

Considering the Hamilton-Jacobi equation, $u: M \times T \to R$.
\begin{equation}
\partial_t u(x,t)+ H(x,\partial_x u(x,t), t)=c[0]
\end{equation}

If $u$ is a subsolution of (2), that means for each $(x_0,t_0) \in M \times T $, there exists a $C^1$ function $\phi:M \times T \to R$, $\phi \ge u$, $\phi(x_0,t_0)=u(x_0,t_0)$, we have
\[
\partial_t \phi(x_0,t_0)+ H(x_0,\partial_x \phi(x_0 ,t_0),t_0) \le c[0]
\]

For all $n \in N$, $h_n:(M \times T) \times (M \times T) \to R$ is defined as :
\[
h_n((x,t),(y,s))=\min_{\gamma \in \Sigma(x,t;y,s+n)}\int^{s+n}_{t}L(\gamma,\dot{\gamma},t)dt+nc[0]
\]

The Peierls barrier is then defined as: for $(x,t)\times (y,s) \in (M\times T) \times (M \times T)$:
\[
h((x,t),(y,s))=\liminf_{n \to \infty} h_n((x,t),(y,s))
\]

The Projected Aubry set is 
\[
\mathcal{A}_{0}:=\{(x,t)\in M \times T: h((x,t),(x,t)=0)\}
\]


In \cite{massart2006subsolutions}, Daniel Massart proved a useful theorem that we desire, we present it here:
\begin{theorem}
	There exists a $C^{1}$ critical subsolution of Hamilton-Jacobi equation which is strict at every point of $\mathcal{A}^{c}_{0}$.
\end{theorem}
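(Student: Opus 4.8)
The plan is to follow the strategy of Fathi--Siconolfi, adapted to the time-periodic (hence non-autonomous) setting on the space-time manifold $M \times T$. The argument splits into two largely independent parts: first I would produce a Lipschitz critical subsolution that is \emph{strict} at every point of $\mathcal{A}_0^c$, and then I would regularize it to a $C^1$ function while preserving both the subsolution property everywhere and the strictness off the Aubry set. The starting point is that Lipschitz critical subsolutions exist, being furnished by the weak KAM solutions; here I work with a fixed Tonelli Lagrangian $L$ and its critical value $c[0]$.

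For the local building block, the key fact is that if $(x_0,t_0) \notin \mathcal{A}_0$, that is $h((x_0,t_0),(x_0,t_0)) > 0$, then there is a critical subsolution that is strict near $(x_0,t_0)$. I would obtain this from the Peierls barrier itself: for fixed $(x_0,t_0)$ the map $(y,s) \mapsto h((x_0,t_0),(y,s))$ is a critical subsolution, and the positivity of the return cost $h((x_0,t_0),(x_0,t_0))$ forces a strict gap $\partial_t u + H(x,\partial_x u,t) \le c[0] - \delta$ on a small neighborhood $U_{x_0,t_0}$, for some $\delta > 0$.

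Next comes the patching. The set $\mathcal{A}_0^c$ is open in the compact space $M \times T$, hence $\sigma$-compact, so it is covered by countably many neighborhoods $U_k = U_{x_k,t_k}$ on each of which a critical subsolution $u_k$ satisfies $\partial_t u_k + H(x,\partial_x u_k,t) \le c[0] - \delta_k$. Since $H(x,\cdot,t)$ is convex by Tonelli condition (II), and since all critical subsolutions share a uniform Lipschitz bound coming from the superlinearity of $L$, a countable convex combination $u = \sum_k \lambda_k u_k$ with $\lambda_k > 0$ and $\sum_k \lambda_k = 1$ converges to a Lipschitz critical subsolution. At a point of $U_j$ the convexity inequality gives $\partial_t u + H(x,\partial_x u,t) \le c[0] - \lambda_j \delta_j < c[0]$, so $u$ is a subsolution on all of $M \times T$ and is strict at every point of $\mathcal{A}_0^c$.

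The main obstacle is the regularization step: upgrading the Lipschitz strict subsolution $u$ to a genuinely $C^1$ one. On $\mathcal{A}_0$ itself the differential of every critical subsolution is uniquely determined and $u$ is already differentiable there, so no smoothing is needed; on $\mathcal{A}_0^c$ I would mollify $u$ in local charts glued by a partition of unity subordinate to a finite atlas of $M \times T$. Convexity of $H$ in the momentum variable, through Jensen's inequality, guarantees that the mollified function does not increase the Hamiltonian beyond a controlled amount, while the variation of $H$ and of the Riemannian metric over the mollification scale $\varepsilon$ contributes only an $O(\varepsilon)$ error. The strict gap from the previous step is exactly the slack needed to absorb this error, so for $\varepsilon$ small on each compact subset of $\mathcal{A}_0^c$ the regularized function stays a subsolution and remains strict. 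The genuinely delicate points, where I expect the real work to lie, are making the mollification error uniform across overlapping charts without destroying convexity, and gluing the smoothed function on $\mathcal{A}_0^c$ to the already-regular function on $\mathcal{A}_0$ across the boundary of the Aubry set; this I would handle by exhausting $\mathcal{A}_0^c$ by compact sets and letting the mollification parameter shrink to zero as one approaches $\mathcal{A}_0$.
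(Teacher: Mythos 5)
First, a point of comparison that matters for this review: the paper offers no proof of this statement at all. Theorem 4.1 is imported verbatim from Massart \cite{massart2006subsolutions}, whose argument is the Fathi--Siconolfi construction adapted to the time-periodic setting; so your overall architecture --- local strict subsolutions near points off the Aubry set, countable convex patching using convexity of $(p_x,p_t)\mapsto p_t+H(x,p_x,t)$, then a regularization whose $O(\varepsilon)$ error is absorbed by the strict gap --- is indeed the route taken by the source the paper relies on.

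The genuine gap is your local building block. The function $(y,s)\mapsto h((x_0,t_0),(y,s))$ is a fixed point of the Lax--Oleinik semigroup and hence a viscosity \emph{solution} of $\partial_t u + H(x,\partial_x u,t)=c[0]$: at every point of differentiability (almost every point, since it is Lipschitz) the equation holds with \emph{equality}, so this function cannot satisfy $\partial_t u + H(x,\partial_x u,t)\le c[0]-\delta$ on any open neighborhood $U_{x_0,t_0}$, no matter that $h((x_0,t_0),(x_0,t_0))>0$. The positivity of the return cost must be used differently: one perturbs the Lagrangian, setting $L_\delta = L-\delta\chi$ with $\chi\ge 0$ a bump equal to $1$ near $(x_0,t_0)$, so that the associated Hamiltonian is $H_\delta = H+\delta\chi$. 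Since $(x_0,t_0)\notin\mathcal{A}_0$, closed curves passing through the support of $\chi$ carry a uniform positive action surplus (by positivity and continuity of the barrier), while all other closed curves are untouched; hence, by the characterization of the critical value through closed curves (the paper's Proposition 4.1), the critical value of $L_\delta$ still equals $c[0]$ for $\delta$ small, and a weak KAM solution for $L_\delta$ is then a critical subsolution for $L$ that is strict by $\delta$ near $(x_0,t_0)$. (The alternative repair --- averaging two subsolutions with a.e.-distinct differentials and invoking strict convexity of $H$ --- is unavailable in the time direction, since $p_t+H(x,p_x,t)$ is only affine in $p_t$; this degeneracy is precisely one of the reasons the time-periodic case needed Massart's separate treatment rather than a direct citation of Fathi--Siconolfi.) With that lemma corrected, your patching step is sound, but be aware that the $C^1$ smoothing that preserves the exact (non-strict) subsolution property across the boundary of $\mathcal{A}_0$, which you defer in your last paragraph, is the bulk of the actual work in Fathi--Siconolfi and in Massart's adaptation.
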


To prove theorem 1.1, we introduce the notion of closed measure.

\begin{definition}
	A probability measure $\mu$ on $TM \times T$ is called closed if 
	\[
	\int_{TM \times T}|v| \, d\mu(x,v,t) < \infty
	\]
	and for every smooth function $f$ on $TM \times T$, we have 
	\[
	\int_{TM \times T}df(x,t)(v,1)d\mu(x,v,t)=0
	\]
	
	We denote the set of closed measures on $TM \times T$ as $\mathcal{M}_{c}$
\end{definition}

In \cite{massart2006subsolutions}, Daniel Massart gives a desired theorem as follows:

\begin{theorem}
	For a Time-Periodic Tonelli Lagrangian $L: TM \times T \to R$, its Mane-Critical Value can be formulated as
	\begin{equation}
	-c[0]=\min_{\mu \in \mathcal{M}_{f}}\int_{TM \times T}L(x,v,t)\,d\mu
	\end{equation}
\end{theorem}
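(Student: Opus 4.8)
The plan is to sandwich the two minima between the two sides of the asserted identity by establishing the chain of inequalities
\[
-c[0]\le \inf_{\mu\in\mathcal{M}_c}\int_{TM\times T}L\,d\mu\le \inf_{\mu\in\mathcal{M}_{inv}}\int_{TM\times T}L\,d\mu = -c[0],
\]
where $\mathcal{M}_c$ is the set of closed measures of Definition 4.1 (this is the family written $\mathcal{M}_f$ in the statement). The rightmost equality is just the defining formula (1), so once the two inequalities are in place every term equals $-c[0]$, the infimum over $\mathcal{M}_c$ is attained, and the theorem follows.

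For the middle inequality I would prove the inclusion $\mathcal{M}_{inv}\subseteq\mathcal{M}_c$. Fix $\mu\in\mathcal{M}_{inv}$ of finite action; superlinearity supplies an affine lower bound $L(x,v,t)\ge a|v|-b$, so $\int|v|\,d\mu\le\big(\int L\,d\mu+b\big)/a<\infty$ and the first-moment requirement holds. For the closedness condition, observe that for $f\in C^{\infty}(M\times T)$ the integrand $df(x,t)(v,1)=\partial_x f\cdot v+\partial_t f$ is exactly the derivative of $f$ along the Euler-Lagrange flow $\Phi_{s,t}$; invariance of $\mu$ then forces $\int df(x,t)(v,1)\,d\mu=0$ by the standard fact that the average of a total derivative against an invariant measure vanishes. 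Hence $\mu\in\mathcal{M}_c$, and the infimum over the larger class $\mathcal{M}_c$ can only be smaller.

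The left inequality is the heart of the matter and is where Theorem 4.1 enters. Let $u\colon M\times T\to\mathbb{R}$ be a $C^1$ critical subsolution of (2), so that $\partial_t u+H(x,\partial_x u,t)\le c[0]$ pointwise. The Fenchel inequality attached to the definition of $H$, namely $p\cdot v\le L(x,v,t)+H(x,p,t)$, applied with $p=\partial_x u(x,t)$, yields the pointwise bound
\[
L(x,v,t)\ge \partial_x u(x,t)\cdot v+\partial_t u(x,t)-c[0]=du(x,t)(v,1)-c[0].
\]
Integrating against an arbitrary $\mu\in\mathcal{M}_c$ and using the closedness identity gives $\int L\,d\mu\ge -c[0]$, and taking the infimum over $\mathcal{M}_c$ closes the chain.

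The main obstacle I anticipate is technical rather than conceptual: the closedness condition of Definition 4.1 is imposed only for smooth test functions, whereas the subsolution $u$ provided by Theorem 4.1 is merely $C^1$. I would bridge this gap by mollification, approximating $u$ by $u_k\in C^{\infty}(M\times T)$ with $\partial_x u_k\to\partial_x u$ and $\partial_t u_k\to\partial_t u$ uniformly; since $\int|v|\,d\mu<\infty$, dominated convergence passes $\int du_k(x,t)(v,1)\,d\mu=0$ to the limit $\int du(x,t)(v,1)\,d\mu=0$, so the lower bound above survives. One must also verify the routine measurability and integrability that legitimize these integrals and the implicit Fubini steps, and note finally that the minimizer of (1) lies in $\mathcal{M}_{inv}\subseteq\mathcal{M}_c$ and realizes the value $-c[0]$, so it furnishes the required minimizer in $\mathcal{M}_c$ and the infimum is genuinely a minimum.
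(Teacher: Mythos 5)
Your proposal is correct, but there is nothing in the paper to compare it against: the paper does not prove this statement at all --- Theorem 4.2 is quoted verbatim from Massart \cite{massart2006subsolutions} as an imported result. What you have written is essentially the standard proof from that literature, so in effect you have supplied the missing argument. The chain of inequalities is sound: superlinearity makes $L$ bounded below, so only finite-action invariant measures matter for the infimum in (1), and for those the first-moment bound $\int |v|\,d\mu<\infty$ follows exactly as you say; the vanishing of $\int df(x,t)(v,1)\,d\mu$ for invariant $\mu$ is the standard Liouville-type identity (justified by Fubini once the first moment is finite), giving $\mathcal{M}_{inv}\subseteq\mathcal{M}_c$ up to the finite-action caveat; and the Fenchel bound $du(x,t)(v,1)\le L(x,v,t)+c[0]$ for a $C^1$ critical subsolution $u$, integrated against a closed measure, gives the reverse inequality. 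Two points deserve explicit mention in a final write-up. First, circularity: you invoke Theorem 4.1, so the argument is legitimate only because the existence of $C^1$ critical subsolutions (Fathi--Siconolfi \cite{fathi2004existence}, adapted to the time-periodic case by Massart) is established independently of the closed-measure formula; it is, but you should say so. Second, your mollification step extending the closedness identity from $C^\infty$ to $C^1$ test functions is precisely the argument the paper itself performs in the first paragraph of its Lemma 4.1, via the uniform bound $|df_n(x,t)(v,1)|\le K(\|v\|+1)$ and dominated convergence, so that step is both correct and consistent with how the paper uses Definition 4.1 (note the paper's ``smooth function $f$ on $TM\times T$'' there is a typo for $M\times T$, and $\mathcal{M}_f$ in the statement is a typo for $\mathcal{M}_c$, as you correctly assumed). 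Your attainment argument is also right: the minimizing invariant (Mather) measure exists by the usual compactness coming from superlinearity, has finite action, hence lies in $\mathcal{M}_c$ and realizes the value $-c[0]$, so the infimum over $\mathcal{M}_c$ is genuinely a minimum.
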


\begin{lemma}
For a Time-Periodic Tonelli Lagrangian $L:TM \times T \to R$, if one measure $\mu \in P(TM)$ satisfies that  
\[
-c[0]=\int_{TM \times T} L(x,v,t)d\mu
\]
, $supp(\mu)$ is a compact subset in $TM \times T$.
\end{lemma}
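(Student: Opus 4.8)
The plan is to reduce the assertion to an a priori bound on the velocity coordinate over the support of $\mu$, exploiting that the base $M \times T$ is compact: a closed subset of $TM \times T$ is compact precisely when the velocities $|v|$ stay bounded on it. So it suffices to produce a constant $R>0$ with $|v| \le R$ for $\mu$-almost every $(x,v,t)$; then $\mathrm{supp}(\mu)$, being the smallest closed set of full measure, is contained in the compact set $\{|v|\le R\}$ and is therefore compact. Throughout I read $\mu$ as a minimizing closed measure, which is the natural meaning in light of Theorem 4.2, where the minimum is attained over $\mathcal{M}_c$.

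First I would record the elementary moment bound. By superlinearity, taking $K=1$ there is a constant $C$ with $L(x,v,t) \ge |v| - C$ for all $(x,v,t)$; integrating against $\mu$ gives $\int |v|\,d\mu \le \int L\,d\mu + C = -c[0] + C < \infty$, so $\mu$ has finite first moment. The heart of the argument is then to convert minimality into a pointwise identity. By Theorem 4.1 there is a $C^1$ critical subsolution $u$, so $\partial_t u(x,t) + H(x,\partial_x u(x,t),t) \le c[0]$ everywhere. Combining the Fenchel inequality $\partial_x u \cdot v \le L(x,v,t) + H(x,\partial_x u,t)$ with the subsolution inequality yields the pointwise bound
\[
L(x,v,t) + c[0] - du(x,t)(v,1) \ge 0,
\]
where $du(x,t)(v,1) = \partial_x u(x,t)\cdot v + \partial_t u(x,t)$. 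Since $\mu$ is closed, $\int du(x,t)(v,1)\,d\mu = 0$. Integrating the displayed inequality gives $\int L\,d\mu + c[0] \ge 0$; minimality forces equality, so the nonnegative integrand vanishes $\mu$-almost everywhere:
\[
L(x,v,t) = du(x,t)(v,1) - c[0] \qquad \mu\text{-a.e.}
\]

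Finally I would close the loop with superlinearity. Because $u \in C^1$ and $M \times T$ is compact, $A := \sup \|\partial_x u\|$ and $B := \sup |\partial_t u|$ are finite, whence on the almost-everywhere set above $L(x,v,t) \le A|v| + B + |c[0]|$. Applying superlinearity with $K = A+1$ gives $(A+1)|v| - C(A+1) \le A|v| + B + |c[0]|$, that is $|v| \le C(A+1) + B + |c[0]| =: R$ for $\mu$-almost every point, which is exactly the bound sought; the geometric conclusion that $\mathrm{supp}(\mu) \subseteq \{|v|\le R\}$ is compact then follows as above.

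I expect the main obstacle to be the technical justification that the closedness relation $\int du(x,t)(v,1)\,d\mu = 0$, stated in Definition 4.1 only for \emph{smooth} test functions, extends to the merely $C^1$ subsolution $u$. I would handle this by approximating $u$ in the $C^1$ topology by smooth functions $u_k$ and passing to the limit, a step that is legitimate precisely because of the finite first moment $\int|v|\,d\mu<\infty$ established at the outset. A secondary point of care is the passage from the almost-everywhere velocity bound to the statement about the topological support; the remaining ingredient, that fiber-boundedness over a compact base yields compactness, is routine.
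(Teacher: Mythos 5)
Your proof is correct, and its skeleton coincides with the paper's: both read $\mu$ as a minimizing closed measure, extend the closedness identity $\int du(x,t)(v,1)\,d\mu=0$ from smooth to $C^1$ test functions by $C^1$-approximation plus dominated convergence (legitimate thanks to the finite first moment $\int|v|\,d\mu<\infty$), invoke the $C^1$ critical subsolution $u$ of Theorem 4.1, and combine the Fenchel and subsolution inequalities to see that the nonnegative integrand $L(x,v,t)+c[0]-du(x,t)(v,1)$ has zero integral against $\mu$, hence vanishes $\mu$-almost everywhere. The divergence is only in the endgame. The paper exploits the equality case of the Fenchel inequality: on $\mathrm{supp}(\mu)$ one gets $\partial_x u(x,t)=\frac{\partial L}{\partial v}(x,v,t)$, hence $v=\frac{\partial H}{\partial p}(x,\partial_x u(x,t),t)$, so the support lies in the graph of a continuous section over the compact base $M\times T$ (and, using the strictness of Massart's subsolution, over the projected Aubry set $\mathcal{A}_0$), which is compact. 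You instead deduce from the a.e. identity $L=du(x,t)(v,1)-c[0]$ and the uniform bounds $A=\sup\|\partial_x u\|$, $B=\sup|\partial_t u|$ that $L\le A|v|+B+|c[0]|$ on a full-measure set, and let superlinearity with slope $A+1$ force $|v|\le R$ there; since $\{|v|\le R\}$ is closed and of full measure, it contains $\mathrm{supp}(\mu)$. Your finish is more elementary: it needs neither the Legendre transform nor the strictness of the subsolution (any $C^1$ critical subsolution suffices), and you explicitly flag and correctly resolve the two technical points the paper glosses over (the $C^1$ extension of closedness, which the paper also carries out, and the passage from an a.e. bound to a statement about the topological support). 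What the paper's finish buys in exchange is structural information beyond compactness — the support is localized over the Aubry set and inside an exact Lagrangian graph — which your velocity-ball bound does not provide, though it is not needed for the lemma as stated.
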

\begin{proof}
	For each closed measure $\mu$, if $f:M \times T \to R$ is a smooth function, we have 
	\[
	\int_{TM \times T}df(x,t)(v,1)d\mu(x,v,t)=0
	\]
	If $g:M \times T \to R$ is $C^1$, we can approximate it in the uniform $C^1$ topology by a sequence of $C^{\infty}$ functions $f_n: M\times T \to R$. In particular, there is a constant $K < \infty$, for each $x \in M$, and $n \in N$, we have $|df(x,t) \cdot(v,1)| \le K(||v||+1)$.  Since $\int_{TM \times T}||v||d\mu(x.v.t)<\infty $, $df_n(x,t)(v,1) \to dg(x,t)(v,1)$ by the dominated convergence theorem, we obtain that $\int_{TM \times T}dg(x,t)\cdot(v,1) d\mu(x,v,t)=0$.
	
 Suppose that a closed measure $\mu$ does satisfy that $\int_{TM \times T}L(x,v,t)d\mu=-c[0]$, From Theorem 4.1, we know that there exists $u: M \times T \to R$ as a $C^1$ critical subsolution, such that for $x \in \mathcal{A}^{c}_{0}$, we have
	\[
	\partial_t u(x,t)+ H(x,\partial_x u(x,t),t)< c[0]
	\]
We integrate the following equation:
\[
\partial_x u(x,t)\cdot v + \partial_t u(x,t) \le L(x,v,t)+ H(x,\partial_x u(x,t),t)+ \partial_tu(x,t) \le L(x,v,t)+c[0]
\] 	
Then we get 
	\[
	0 \le \int_{TM \times T}L(x,v,t)+H(x,d_x u,t) d\mu \le 0
	\]
	So we know that if $(x,v)\in supp(\mu)$, we have 
	\begin{equation}\nonumber
	\begin{aligned}
	&\partial_t u(x,t)+H(x,\partial_x u(x,t),t)=c[0]\\
	&\partial_x u(x,t)(v)=L(x,v,t)+H(x,\partial_x u(x,t) ,t)
	\end{aligned}
	\end{equation}
	So we know that $x \in \mathcal{A}_{0}$, $\partial_x u(x,t)=\frac{\partial L}{\partial v}(x,v,t)$, and  $v=\frac{\partial H}{\partial p}(x,\partial_x u(x,t),t)$. Hence we conclude that $supp(\mu)$ is compact.
	
\end{proof}

Coming back to a Stochastic Time-Periodic Tonelli Lagrangian $L: TM \times T \times \Omega \to R$, for each $\omega \to \Omega$, $L(\cdot,\cdot,\cdot,\omega)$ is a Time-Periodic Tonelli Lagrangian. So $L(\cdot,\cdot,\cdot,\omega)$ has a Mane-Critical Value, we denote it as $\alpha(\omega)$.

\begin{lemma}
	Given a compact connected Riemannian manifold $(M,g)$, the manifold $M$, its tangent bundle $TM$ and $TM \times T$ are separable, complete metric spaces.
\end{lemma}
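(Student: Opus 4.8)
\section*{Proof proposal for Lemma 4.3}

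The plan is to treat the three spaces in turn, reducing each to standard facts about compact spaces, closed subsets of complete spaces, and finite products. Here $T=\mathbb{R}/\mathbb{Z}$ denotes the one-dimensional time circle.

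First, consider $M$. The Riemannian metric $g$ induces the length (geodesic) distance $d_g$ on $M$, and since $M$ is connected this is a genuine metric inducing the manifold topology. Because $M$ is compact, $(M,d_g)$ is automatically complete (every Cauchy sequence has a convergent subsequence by compactness, hence converges) and separable (a compact metric space is totally bounded, hence second countable, hence admits a countable dense set). This disposes of $M$ with no real difficulty.

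The substantive step is $TM$, which is noncompact because its fibres $T_xM\cong\mathbb{R}^{\dim M}$ are unbounded, so completeness is not automatic. My preferred route is to realize $TM$ concretely as a closed subset of a Euclidean space. By the Whitney (or Nash) embedding theorem, $M$ embeds as a compact, hence closed, submanifold of some $\mathbb{R}^N$; identifying each $T_xM$ with a linear subspace of $\mathbb{R}^N$ presents $TM$ as
\[
TM \cong \{(x,v)\in \mathbb{R}^N\times\mathbb{R}^N : x\in M,\ v\in T_xM\}\subset \mathbb{R}^{2N},
\]
the induced Euclidean metric inducing the manifold topology. The key point is that this subset is closed in $\mathbb{R}^{2N}$: if $(x_k,v_k)\to(x,v)$ with $x_k\in M$ and $v_k\in T_{x_k}M$, then $x\in M$ since $M$ is closed, and $v\in T_xM$ since, writing $M$ locally as $\Phi^{-1}(0)$ for a submersion $\Phi$ with $T_yM=\ker D\Phi(y)$, passing to the limit in $D\Phi(x_k)v_k=0$ gives $D\Phi(x)v=0$. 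A closed subset of the complete separable metric space $\mathbb{R}^{2N}$ is itself complete and separable, which gives the claim for $TM$.

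Finally, for $TM\times T$ I would use that the circle $T$ is compact, hence a complete separable metric space by the same reasoning as for $M$. Equipping the product with the sum metric $d\big((a,s),(a',s')\big)=d_{TM}(a,a')+d_T(s,s')$, a finite product of complete separable metric spaces is again complete (Cauchy sequences converge coordinatewise) and separable (the product of the two countable dense sets is countable and dense). This completes all three cases. I expect the only genuine obstacle to be the completeness of $TM$: unlike $M$ and $T$ it is not compact, so one cannot invoke compactness directly. The embedding argument sidesteps this cleanly; alternatively one could argue via the Sasaki metric on $TM$, whose geodesic completeness is equivalent to completeness of the base $(M,g)$, guaranteed here by compactness through the Hopf--Rinow theorem. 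The one detail to verify carefully in the embedding approach is the closedness of $TM$ in $\mathbb{R}^{2N}$ in the presence of unbounded fibres, which rests on the continuity of $x\mapsto T_xM$ as a family of subspaces.
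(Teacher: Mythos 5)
Your proposal is correct, and for the only substantive case ($TM$) it takes a genuinely different route from the paper. You realize $TM$ extrinsically: embed $M$ in $\mathbb{R}^N$ by Whitney, identify $TM$ with $\{(x,v): x\in M,\ v\in T_xM\}\subset\mathbb{R}^{2N}$, check closedness by passing to the limit in $D\Phi(x_k)v_k=0$ for a local submersion presentation, and inherit completeness and separability from $\mathbb{R}^{2N}$. The paper instead stays intrinsic: it builds a Riemannian metric on $TM$ chart by chart (a crude Sasaki-type metric pairing the horizontal and vertical components with $g$), takes the induced length distance $d_{TM}$, and proves completeness directly by a Cauchy-sequence argument --- using the inequality $d_M(\pi(p),\pi(q))\le d_{TM}(p,q)$ to push a Cauchy sequence down to $M$, obtain a limit $x_0$, and then work in a single chart around $x_0$ where the fibre coordinates form a Cauchy sequence in $\mathbb{R}^n$; separability of $M$ and $TM$ comes from a finite chart cover rather than from a countable dense set in $\mathbb{R}^{2N}$. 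Your approach buys a shorter argument in which the delicate point (completeness despite unbounded fibres) is reduced to closedness in Euclidean space, at the cost of invoking the embedding theorem --- Whitney suffices here, Nash is overkill since no isometry is needed; the paper's approach buys self-containedness and an explicit intrinsic metric, at the cost of constructing the metric on $TM$ and of the projection inequality playing the role your closedness check plays. Your parenthetical alternative via the Sasaki metric and Hopf--Rinow is in fact closer in spirit to what the paper does, except the paper avoids Hopf--Rinow (which on the noncompact $TM$ would require verifying geodesic completeness) by arguing on Cauchy sequences directly. Your treatments of $M$ and of the product $TM\times T$ with the sum metric coincide with the paper's. Interestingly, the embedding-plus-separability device you use does appear in the paper, but in the proofs of Lemma 6.3 and the second proof of Theorem 1.1, not here.
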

\begin{proof}{First proof of Theorem 1.1}
	
	Since $M$ is a compact manifold, we can find a finite number of charts $\{(U_i,\phi_i)| 1 \le i \le N\}$ to cover $M$.For $1 \le i \le N$, $U_i$ is isomorphic to a open subset of $R^n$, so $U_i$ is a separable open set for $1 \le i \le N$. Hence, $M=\cup_{1 \le i \le N}U_i$ is also separable. 
	
	$TU_i$ is isomorphic to $\phi_i(U_i) \times R^n$, which is separable. So $TU_i$ is separable. Then $TM \subset \cup^{N}_{i=1}TU_i$. Hence $TM$ is separable.
	
	To prove $M$ is a metric space, we define a distance on $M$. For any $x,y \in M$, $\Sigma(x,y)=\{\gamma|\gamma :[0,1] \to M, \gamma$ is absolutely continuous on $M\}$. We denote the distance between $x$ and $y$ as $d(x,y)$, defined by
	\[
	d(x,y)=\inf_{\gamma \in \Sigma(x,y)}\int^{1}_{0}g_{\gamma(t)}(\dot{\gamma}(t),\dot{\gamma}(t))dt
	\]
	
	$M$ is connected, any two different points can be connected by an absolutely continuous curve. $d_{M}(x,y)$ defines a metric on $M$.(see \cite{petersen2006riemannian}). Since any compact metric space is complete, we can know that $M$ is complete.
	
	To prove $TM$ is a metric space, firstly we prove $TM$ is a Riemannian manifold.
	
	We construct the Riemannian metric locally on $TM$. $\{(U_i,\phi_i)|1 \le i \le N \}$ are charts which cover $M$, then $\{(TU_i,d\phi_i)|1 \le i \le N\}$ are charts which cover $TM$, and $d {\phi}_i( TU_i)={\phi}_j(U_i)\times R^n, 1 \le i \le N$. For some $j$,  $1 \le j \le N$,  $\xi \in TU_j$,  $(x,v)=d \phi(\xi) \in \phi(U_j) \times R^n $, for $V,W \in T_{\xi}(TU_j)$, we write $d\phi_j(V)=(V_1,V_2) \in R^{n+n}$,  $d\phi_j(W)=(W_1,W_2) \in R^{n+n}$. $\pi:TM \to M$ is the canonical projection. We define the Riemannian Metric on $TM$ as:
	\[
	G_{\xi}(V,W)=g_{\pi(\xi)}((d\phi_j)^{-1}V_1,(d\phi_j)^{-1}W_1)+g_{\pi(\xi)}((d\phi_j)^{-1}V_2,(d\phi_j)^{-1}W_2)
	\]
	This Riemannian metric on $TM$ is well defined. Then we can prove than $\mathbb{TM}$ is a metric space by viewing it as a connected Riemannian Manifold, we denote the metric by $d_{TM}(\cdot,\cdot)$.
	
	To prove $TM$ is complete under this metric.Let $\xi_1,\xi_2,..,\xi_n,...$ be a cauchy sequnce in $TM$ under the metric $d_{TM}(\cdot,\cdot)$. Then $\pi(\xi_1),\pi(\xi_2),...\pi(\xi_n),..$ are cauchy sequences in $M$ under the metric $d(\cdot,\cdot)$, since $d(\pi(p),\pi(q)) \le d_{TM}(p,q)$ for all $p,q \in TM$.Then we can find $x_0 \in TM$ such that $\lim_{i \to \infty}d(\pi(\xi_i),x_0)=0$.Then there is a local chart $(U,\phi)$ on $M$ such that $x_0 \in U$.There exists an integer $N$, such that for $i \ge N$,$\pi(\xi_i) \in U$. If we write $d\phi(\xi_i)=(x_i,v_i) \in \phi(U_i)$.We have 
	$\lim_{i \to \infty}x_i=x_0$.Since $(x_i,v_i) \in d\phi(U_i)\times R^n$ is cauchy sequnce,then $v_i \in R^n, i \ge N$ is a cauchy sequence. $R^n$ is complete, so there exists $v_0 \in R^n$, such that $\lim_{i \to \infty}v_i=v_0$. Then $\lim_{i \to \infty}(x_i,v_i)=(x_0,v_0)$. So $TM$ is complete.

     Both $TM$ and $T$ are separable, complete metric space, so we can define the metric $d_{TM \times T}$ in this way , if $(\xi_1,t_1),(\xi_2,t_2) \in TM \times T$, $d_{TM \times T}((\xi_1,t_1),(\xi_2,t_2))=d_{TM}(\xi_1,\xi_2)+d_{T}(t_1,t_2)$. It is clear that $TM \times T$ is also a complete separable metric space. 
\end{proof}

We turn to the first version of proof of Theorem 1.1 
\begin{proof}
From Lemma 4.1, $TM \times T$ is a complete separable metric space. With Lemma 3.3, Lemma 3.4, $\mathcal{P}(TM \times T)$ with the Prokhorov metric $d_{P}$ is a complete separable metric space. The closed measure is a subset of $\mathcal{P}(TM \times T)$, so we can pick up a countable dense subset of closed measure $\{\mu_k|k=1,2,...,n,...\}$. Fix $\omega$, $L(\cdot,\cdot,\cdot,\omega)$ is a Time-Periodic Tonelli Lagrangian, From the lemma 4.2 and lemma 3.4, we know there is a $\mu \in P(TM \times T)$ making (1) holds, and $\mu$ is indeed a closed measure,  (see \cite{massart2006subsolutions},\cite{fathi2004existence}). From Lemma 6.1, we know that the support of $\mu$ is compact,There exists $R(\omega)>0$ such that if $(x,v) \in supp(\mu)$,we have $g_x(v) \le R(\omega)$. We can find a smooth function $\chi_{\omega}: [0,\infty) \to [0,1] $ such that $\chi(x)=1$ when $ 0 \le x \le R(\omega)$; $\chi(x)=0$ when $x \ge R(\omega)+1$. We have
\[
-\alpha(\omega)=\inf_{\mu \in {TM \times T}}\int_{\mu \in \mathcal M}L(x,v,t,\omega)\chi_{\omega}(g_x(v))d\mu(x,v,t)
\]
Since $L(x,v,t,\omega)\chi_{\omega}(g_x(v))$ is a continuos bounded function, so we can apply weak convergence of probability measure on $L(x,v,t,\omega)\chi_{\omega}(g_x(v))$.Since $\{\mu_k|k=1,2,...\}$ is dense on $\mathcal{P}(TM \times T)$, we know that
\[
-\alpha(\omega)=\inf_{k \in N} \int_{TM \times T}L(x,v,t,\omega)\chi_{\omega}(g_x(v))d\mu_{k}=\inf_{k \in N} \int_{TM \times T}L(x,v,t,\omega)d\mu_k
\]
By Fubini Theorem (see \cite{cannarsa2015introduction}), for each closed measure $\mu$, we know that $\int_{TM \times T}L(x,v,t,\omega)d\mu$ is measurable function on $\Omega$. Since the infimum of a countable measurable funtion is measurable on $\Omega$, $\alpha(\omega)$ is measurable. Since $L(x,v,t,\theta(s)\omega)=L(x,v,t+s,\omega)$,for $s \in R$, we know that $\alpha(\theta(s)\omega)=\alpha(\omega)$. If $\{\theta(s)$,$s \in R\}$ is ergodic on $\Omega$, we know that $\alpha(\omega)$ is constant almost everywhere.This finishes the proof of Theorem 1.1
\end{proof}

The author has an another simple proof, we take advantage of a useful result from \cite{contreras2013weak}
\begin{proposition}
If $L:TM \times T \to R$ is a Tonelli Lagrangian, the  Mane-Critical Value has another interpretation:

$\alpha(0)=\min\{k: \int L+k \ge 0$ for all closed curves $\gamma\}$

A curve $\gamma:[a,b] \to M$ is called closed if $\gamma(a)=\gamma(b)$ and $b-a$ is an integer.

\end{proposition}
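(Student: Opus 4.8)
The plan is to prove the identity by relating both sides to the set $\mathcal{M}_c$ of closed probability measures on $TM\times T$, through the correspondence between closed curves and their occupation measures. Write $c_\star:=\min\{k:\int_a^b (L(\gamma,\dot\gamma,s)+k)\,ds\ge 0\ \text{for every closed curve }\gamma\}$ for the right-hand side. By Theorem 4.2 one has $\alpha(0)=\min_{\mu\in\mathcal M_c}\int_{TM\times T}L\,d\mu$, so the statement amounts to identifying $c_\star$ with this minimal average action (up to the sign convention relating $\alpha(0)$ and $c[0]$, which should be matched when inserting the final equality). The bridge is the following: a closed curve $\gamma:[a,b]\to M$ with $\gamma(a)=\gamma(b)$ and $n:=b-a\in\mathbb N$ determines a probability measure $\mu_\gamma$ on $TM\times T$ via $\int f\,d\mu_\gamma=\tfrac1n\int_a^b f(\gamma(s),\dot\gamma(s),s)\,ds$. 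First I would check $\mu_\gamma\in\mathcal M_c$: for smooth $F:M\times T\to\mathbb R$ the fundamental theorem of calculus gives $\int dF(x,t)\cdot(v,1)\,d\mu_\gamma=\tfrac1n\big(F(\gamma(b),b)-F(\gamma(a),a)\big)$, which vanishes because $\gamma(a)=\gamma(b)$ and $b\equiv a\pmod 1$ (here $T=\mathbb R/\mathbb Z$), so the two time-fibres coincide.

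The first, easy inequality is immediate from this correspondence. Since each $\mu_\gamma$ is closed, $\tfrac1n\int_a^b L\,ds=\int L\,d\mu_\gamma\ge\min_{\mu\in\mathcal M_c}\int L\,d\mu$, hence $\int_a^b\big(L-\min_\mu\int L\,d\mu\big)\,ds\ge0$ for every closed curve. Thus $k=-\min_\mu\int L\,d\mu$ is admissible and $c_\star\le -\min_{\mu\in\mathcal M_c}\int L\,d\mu$.

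The reverse inequality is where the real work lies. Let $\mu^\ast$ be a minimizing closed measure (which exists by Theorem 4.2); its support is compact by the compact-support lemma, a minimizing closed measure is invariant under the Euler–Lagrange flow, and after passing to ergodic components I may assume $\mu^\ast$ is ergodic. The plan is to approximate $\mu^\ast$ by occupation measures $\mu_{\gamma_j}$ of closed curves so that $\int L\,d\mu_{\gamma_j}\to\int L\,d\mu^\ast$. Concretely, Birkhoff's theorem (applicable since $L$ is bounded on $\mathrm{supp}(\mu^\ast)$) selects an orbit whose time-averages of $L$ converge to $\int L\,d\mu^\ast$, and Poincaré recurrence produces integer times $n_j\to\infty$ at which this orbit returns arbitrarily close to its starting point in $TM\times T$; splicing in a short bridge of uniformly bounded action closes the projected curve in $M$ over a time interval of integer length $n_j$. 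The correction contributes $O(1)$ and is negligible after division by $n_j$, so $\mu_{\gamma_j}\rightharpoonup\mu^\ast$ with $\int L\,d\mu_{\gamma_j}\to\int L\,d\mu^\ast$. For any admissible $k$ this yields $\int(L+k)\,d\mu^\ast=\lim_j\tfrac1{n_j}\int_{\gamma_j}(L+k)\,ds\ge0$, i.e. $\min_\mu\int L\,d\mu=\int L\,d\mu^\ast\ge -k$; taking the infimum over admissible $k$ gives $-\min_{\mu\in\mathcal M_c}\int L\,d\mu\le c_\star$, and combining with the previous paragraph proves the identity.

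The main obstacle is exactly this quantitative closing step. Weak convergence of the occupation measures alone does not suffice, because $L$ is superlinear (hence unbounded and only lower semicontinuous under weak convergence), so I must engineer the closed curves to make the \emph{action averages} converge, not merely the measures. Two ingredients control this: the compactness of $\mathrm{supp}(\mu^\ast)$, which confines the recurrent orbit to a region where $L$ is bounded (and lets me reuse the cutoff $\chi$ from the first proof of Theorem 1.1 to reduce to a bounded continuous integrand), and the fact that the bridging segments can be taken short with action bounded independently of $j$. This is the Atkinson–Mañé type argument underlying the closed-curve formula in \cite{contreras2013weak}; everything else is the bookkeeping of the two inequalities above.
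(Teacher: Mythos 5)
Your proposal is correct, but be aware that the paper never actually proves Proposition 4.1: it is imported as a known result from \cite{contreras2013weak} and used as a black box in the second proof of Theorem 1.1, so there is no internal proof to compare against. What you wrote is a faithful reconstruction of the standard Atkinson--Ma\~n\'e closing argument that underlies the cited formula, and it is consistent with how the paper uses the proposition. Three remarks. First, your treatment of the sign convention is the right one: with equation (3) of the paper one has $c_\star=-\min_{\mu\in\mathcal M_c}\int L\,d\mu$, which matches the convention the paper actually uses in its proofs (e.g.\ $-\alpha(\omega)=\inf_k\int L\,d\mu_k$ in the first proof of Theorem 1.1), even though the literal statement of Proposition 4.1 and the definition of $\alpha(\omega)$ in the introduction are not mutually consistent; flagging and fixing this, as you did, is necessary. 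Second, the one step of your sketch that deserves to be made precise is the claim that recurrence happens at integer times: the clean way is to apply Poincar\'e recurrence not to the flow on $TM\times T$ but to the time-one map $\Phi_{t_0,t_0+1}$, for which the flow-invariant measure $\mu^\ast$ is still invariant, so that returns automatically occur at integer times with matching time fibre; then closing with a bridge of duration exactly $1$ keeps $b-a\in\mathbb Z$ as the definition of closed curve requires, and its action is bounded by a constant of the type $C(\omega)$ in Notation 6.1, hence negligible after dividing by $n_j$. Third, your invariance-plus-ergodic-decomposition step silently uses that flow-invariant measures with finite first moment are themselves closed (so the ergodic components remain admissible competitors); this is standard but worth one line, since the paper's Theorem 4.2 is stated only for closed measures. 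With these points spelled out, your two inequalities combine exactly as you say, and the argument agrees with the proof in the cited reference rather than with anything in the paper itself.
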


\begin{proof}{second proof of Theorem 1.1}
	
For integers $m<n$, Let $C^{0}([m,n], TM)$ denote the space of continuous functions from $[m,n]$ to $TM$ with the  bounded uniform norm. $M$ is a compact manifold, by whitney's theorem, there exists $m \in N$, such that $M$ can be embedded isomorphically into $R^m$. Therefore, $TM$ can be embedded as a submanifold of $R^{2m}$. By Stone-Weierstrass theorem, the space of continuous functions from $[m,n]$ to $R$ is separable. Therefore, the space of continuous functions from $[m,n]$ to $R^{2m}$ is separable. As its subset, $C^{0}([m,n],TM)$ is separable. So $\bigcup_{m<n}C^{0}([m,n],TM)$ is separable. For a Tonelli Lagrangian, the closed curve which integral of Lagrangian attains the minimum along it, satisfies the Euler-Lagrangian equation, and is $C^2$ and compact. So the space of curves, along which Lagrangian Action achieves the minimum, is a subspace of $\bigcup_{m<n}C^{0}([m,n],TM)$.So we can find a sequence of closed curves $\{\gamma_{i}\}$ for $i \in N$, such that 
\[
\alpha(\omega)=\inf_{i \in N}\int_{\gamma_i}L(\gamma_i(t),\dot{\gamma_i}(t),t,\omega)dt
\]
By Fubini theorem, for each $i \in N$, $\int_{\gamma_i}L(\gamma_i(t),\dot{\gamma_i}(t),t,\omega)dt$ is a measurable function. As the infinimun of countable measurable functions, we know that $\alpha(\omega)$ is a measurable function. Since $L(x,v,t,\theta(s)\omega)=L(x,v,t+s,\omega)$, for $s \in R$, we get $\alpha(\theta(s)\omega)=\alpha(\omega)$. If $\{\theta(s),s \in R\}$ is ergodic on $\Omega$, we know that $\alpha(\omega)$ is constant almost everywhere. This finishes the proof of Theorem 1.1.

\end{proof}

\section{Semiconcave Estimates on Lagrangain Action}

In this section we have a revision of some basic properties of a class of nonsmooth functions, the so-called semiconcave functions. The good properties of semiconcave functions provide fundamental technical tools for the analysis of singularities of Lagrangian Action and Weak KAM Solutions.

It is well known that a real-valued function $u$ is semiconcave in an open demain $U \in R^n$ if, for any compact set $K \subset U$, there exists a constant $C \in R$ such that
\[
tu(x_1)+(1-t)u(x_0)- u(tx_1+(1-t)x_0) \le Ct(1-t){|x_1-x_0|}^2
\]
for all $t \in [0,1]$ and for all $x_0,x_1 \in K$ satisfying $[x_0,x_1] \subset K$. We refer to such a constant $C$ as a semiconcavity constant for $u$ on $K$. We denote by $SC(U)$ the class of all semiconcave functions in $U$.

We  review some differentiability properties of semiconcave functions. To begin, let us recall that any $u \in SC(U)$ is locally Lipschitz continuous.(see \cite{cannarsa2004semiconcave}).Hence,by Rademacher's Theorem, $u$ is differentiable a.e in $U$ and the gradient of $u$ is locally bounded. Then, the set
\[
D^{*}_{x}=\{p \in R^n: U \ni x_i \to x, Du(x_i) \to p\}
\]
is nonempty for any $x \in U$. The elements of $D^{*}u(x)$ are called reachable gradients.

The superdifferential of any function $u:U \to R$ at a point $x \in U$ is defined as 

\[
D^{+}_{x}u=\{p \in R^n: \limsup_{h \to 0}\frac{u(x+h)-u(x)-<p,h>}{|h|} \le 0\}
\]
Similarly, the subdifferential of $u$ at $x$ is given by 
\[
D^{-}_{x}u=\{p \in R^n: \liminf_{h \to 0}\frac{u(x+h)-u(x)-<p,h>}{|h|} \ge 0\}
\]
Next, we list some properties:
\begin{proposition}
	Let $u:A \to R$ and $x \in A$. Then the following properties hold true.
	
	(1)$D^{+}_{x}u$ and $D^{-}_{x}u$ are closed convex sets.
	
	(2)$D^{+}_{x}u$ and $D^{-}_{x}u$ are both nonempty if and only if $u$ is differentiable at $x$; in this case we have that 
	\[
	D^{+}_{x}u=D^{-}_{x}u=\{D_{x}u\}
	\]
	
	Furthermore, when $u: U \to R$ be a semiconcave function, we have
	
	(3)$D^{+}_{x}u=cov D^{*}_{x}u$
	
	(4)$D^{+}_{x}u \neq \emptyset $
	
	(5)When $D^{+}_{x}u$ is a singleton,  $u$ is differentiable at $x$.
\end{proposition}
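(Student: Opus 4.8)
The plan is to establish (1) and (2) directly from the definitions of $D^+_x u$ and $D^-_x u$, and then to deduce the semiconcave-specific statements (3)--(5) from the quadratic characterization of semiconcavity. Throughout I would exploit the elementary identity $D^-_x u = -\,D^+_x(-u)$, which lets me prove every assertion only for the superdifferential and read off the subdifferential version by replacing $u$ with $-u$.

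For (1), convexity of $D^+_x u$ follows because the numerator $u(x+h)-u(x)-\langle p,h\rangle$ is affine in $p$, so a convex combination of two admissible $p$'s splits into a convex combination of two difference quotients and $\limsup$ is subadditive; closedness follows by passing to the limit $p_n \to p$ inside the defining inequality. For (2), the ``if'' direction is immediate since when $u$ is differentiable at $x$ the gradient $D_x u$ belongs to both $D^+_x u$ and $D^-_x u$. For the ``only if'' direction I would take $p \in D^+_x u$ and $q \in D^-_x u$, subtract the $\limsup \le 0$ and $\liminf \ge 0$ inequalities to obtain $\limsup_{h\to 0}\langle q-p,h\rangle/|h| \le 0$, and then test along $h = t(q-p)$ with $t\to 0^+$, which forces $p=q$; feeding this common value back into both one-sided inequalities yields differentiability with $D_x u = p$.

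For the semiconcave statements I would first record the quadratic estimate characterizing semiconcavity (see \cite{cannarsa2004semiconcave}): on a compact neighbourhood of $x$ with semiconcavity constant $C$, one has $p \in D^+_y u$ if and only if $u(z) \le u(y) + \langle p, z-y\rangle + C|z-y|^2$ for $z$ near $y$. Passing to the limit along $y = x_i \to x$ with $Du(x_i) \to p$ shows $D^*_x u \subseteq D^+_x u$, and convexity from (1) upgrades this to $\operatorname{cov} D^*_x u \subseteq D^+_x u$; since $Du$ is locally bounded, $D^*_x u$ is compact, hence $\operatorname{cov} D^*_x u$ is compact and nonempty, which already proves (4). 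The reverse inclusion $D^+_x u \subseteq \operatorname{cov} D^*_x u$ is the crux: I would show that the one-sided directional derivative $\partial^+ u(x;q) := \lim_{t\to 0^+}\big(u(x+tq)-u(x)\big)/t$ exists for semiconcave $u$ (monotonicity of the difference quotient up to the quadratic correction), that every $p \in D^+_x u$ satisfies $\partial^+ u(x;q) \le \langle p, q\rangle$ for all $q$, and crucially that $\partial^+ u(x;q) = \min_{p'\in D^*_x u}\langle p', q\rangle$. Combining these gives $\langle p,q\rangle \ge \min_{p'\in \operatorname{cov}D^*_x u}\langle p', q\rangle$ for every $q$, which by the support-function characterization of compact convex sets is exactly $p \in \operatorname{cov} D^*_x u$.

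The hard part will be the directional-derivative formula $\partial^+ u(x;q)=\min_{p'\in D^*_x u}\langle p',q\rangle$, i.e. identifying the one-sided derivative with the minimal slope over reachable gradients; the cleanest route is to write $u$ locally as a concave function plus a smooth quadratic and invoke the classical description of the superdifferential of a concave function, transporting the formula back to $u$. Finally, for (5), if $D^+_x u = \{p\}$ then by (3) $\operatorname{cov}D^*_x u = \{p\}$, whence $D^*_x u = \{p\}$; consequently every sequence $y_n \to x$ of differentiability points has $Du(y_n) \to p$, and this together with (2) shows that $u$ is differentiable at $x$ with $D_x u = p$.
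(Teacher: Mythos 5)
The paper does not actually prove Proposition 5.1: it states it and refers the reader to \cite{cannarsa2004semiconcave}, so there is no internal argument to compare against. Your proposal is, in outline, a correct reconstruction of the standard proof from that very reference: (1) and (2) by direct manipulation of the $\limsup$/$\liminf$ definitions (your test direction $h=t(q-p)$ is exactly the classical trick), and (3)--(5) via the quadratic inequality $u(z)\le u(y)+\langle p,z-y\rangle+C|z-y|^2$ characterizing superdifferentials of semiconcave functions, the inclusion $D^{*}_{x}u\subseteq D^{+}_{x}u$ by passing to the limit in that inequality, and the support-function argument for the reverse inclusion. You also correctly anticipate the one real danger of circularity in (3): the formula $\partial^{+}u(x;q)=\min_{p'\in D^{*}_{x}u}\langle p',q\rangle$ must not be derived from $D^{+}_{x}u=\operatorname{cov}D^{*}_{x}u$ itself, and your reduction to a concave function plus a smooth quadratic, invoking Rockafellar's description of the subdifferential and directional derivatives of concave functions, handles this cleanly.

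Two steps deserve to be made explicit if you write this out in full. First, in (4), nonemptiness of $D^{*}_{x}u$ rests on local Lipschitz continuity plus Rademacher's theorem and local boundedness of the gradient (the paper records these facts just before the proposition); you use them implicitly. Second, in (5) the phrase ``this together with (2)'' hides the only nontrivial point: from $D^{*}_{x}u=\{p\}$ and gradient convergence you still must place $p$ in $D^{-}_{x}u$ before (2) applies. The standard way is to use the quadratic inequality at nearby differentiability points $y$, evaluated at $z=x$, to get $u(y)\ge u(x)+\langle Du(y),y-x\rangle-C|y-x|^2$, then let $Du(y)\to p$ uniformly as $y\to x$ and extend to all nearby points by density and continuity; this yields $p\in D^{-}_{x}u$, and (2) finishes. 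With these two points filled in, your proof is complete and is essentially the Cannarsa--Sinestrari argument the paper outsources.
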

The proof of Proposition 5.1 can be seen in \cite{cannarsa2004semiconcave}

Given $L: TM \times T \to R $ a Time-Periodic Tonelli Lagrangian. For an absolutely continuous curves $\gamma:[s,t] \to M$, the action of $L$ along $\gamma$ is defined as $A(\gamma)=\int^{t}_{s}L(\gamma(u),\dot{\gamma}(u),u)du$.

$\Sigma(s,y;t,x)=\{\gamma: \gamma:[s,t] \to M $ is absolutely continuous , and $\gamma(s)=y,\gamma(t)=x\}$. The Lagrangian action $A(s,y;t,x)$ is defined as
\[
A(s,y;t,x)=\min_{\gamma \in \Sigma(s,y;t,x)}\int^{t}_{s}L(\gamma(u),\dot{\gamma}(u),u)du
\]

If $\gamma:[s,t] \to M$ attains the minimum of $A(s,y;t,x)$, then from variational methods, we know that $\gamma$ satisfies Euler-Lagrange equation and $\gamma$ is $C^2$.

\begin{lemma}
	Fix $s_1<t_1$, for any $s\le s_1,t \ge t_1$, the lagrangian Action $A(s,\cdot;t,\cdot)$ is equi-semiconcave on $M \times M$, therefore equi-Lipschitz.
\end{lemma}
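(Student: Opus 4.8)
The plan is to prove the stronger fact that the map $(y,x)\mapsto A(s,y;t,x)$ admits at every point a \emph{supporting paraboloid from above} whose slope and opening are bounded by constants depending only on $\delta:=t_1-s_1$. This is the standard characterization of semiconcavity (cf. \cite{cannarsa2004semiconcave}), and since the bounds will be uniform in $s\le s_1$ and $t\ge t_1$, it yields equi-semiconcavity, and hence equi-Lipschitz continuity, on the compact space $M\times M$. Throughout I would work in normal (exponential) coordinates to interpret the $\mathbb{R}^n$-definition on the manifold.

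The first and decisive step is a uniform a priori bound on the endpoint velocities of minimizers, valid even though $t-s$ may be arbitrarily large. The key observation is that the restriction of a minimizer to any subinterval is again a minimizer between its own endpoints. Let $\gamma:[s,t]\to M$ realize $A(s,y;t,x)$ with $s\le s_1$, $t\ge t_1$. Since $t-\delta\ge t_1-\delta=s_1\ge s$, the restriction $\gamma|_{[t-\delta,t]}$ is a minimizer over an interval of \emph{fixed} length $\delta$ joining the arbitrary points $\gamma(t-\delta)$ and $x$ of the compact manifold. By Tonelli's a priori compactness for minimizers over a fixed-length time window (superlinearity together with compactness of $M$, and periodicity of $L$ in $t$), there is $\kappa=\kappa(\delta)$ with $|\dot\gamma(t)|\le\kappa$; applying the same argument to $\gamma|_{[s,s+\delta]}$ gives $|\dot\gamma(s)|\le\kappa$. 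Consequently the endpoint momenta $p_t=\partial_v L(\gamma(t),\dot\gamma(t),t)$ and $p_s=\partial_v L(\gamma(s),\dot\gamma(s),s)$ are bounded by some $\kappa'=\kappa'(\delta)$, using $L\in C^2$ and that only $t\bmod 1$ matters.

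Next I would construct the paraboloid by a localized endpoint variation. Set $\tau_0=\min(1,\delta)$. Given small $h\in T_xM$ and $k\in T_yM$, define a competitor $\gamma_{h,k}$ that agrees with $\gamma$ on $[s+\tau_0,t-\tau_0]$, runs from $\exp_y(k)$ to $\gamma(s+\tau_0)$ on $[s,s+\tau_0]$, and from $\gamma(t-\tau_0)$ to $\exp_x(h)$ on $[t-\tau_0,t]$ by interpolating the endpoint displacement. Since the velocities of $\gamma$ at $s,\,s+\tau_0,\,t-\tau_0,\,t$ are all bounded by $\kappa$ (again by the restriction argument), and $\|L\|_{C^2}$ is bounded on the resulting compact set of $(x,v,t\bmod 1)$, a second-order Taylor expansion of the action of the two perturbed segments gives
\[
A(s,\exp_y k;\,t,\exp_x h)\;\le\;A(s,y;t,x)-\langle p_s,k\rangle+\langle p_t,h\rangle+C\bigl(|h|^2+|k|^2\bigr),
\]
with $C=C(\delta)$ independent of $s,t,x,y$. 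This is exactly an upward paraboloid touching $A(s,\cdot;t,\cdot)$ from above at $(y,x)$, with slope $(-p_s,p_t)$ of norm $\le\kappa'$ and opening $C$, so $A(s,\cdot;t,\cdot)$ is equi-semiconcave on $M\times M$.

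Finally, the Lipschitz conclusion follows from the same inequality: every reachable gradient of $A(s,\cdot;t,\cdot)$ equals one of the bounded momenta, so by Proposition 5.1(3) the superdifferential is contained in a fixed ball of radius $\kappa'$; writing the semiconcave function locally as a concave function plus the fixed quadratic $C|\cdot|^2$ then forces a uniform Lipschitz constant on the compact set $M\times M$. I expect the only genuine obstacle to be the first step, namely keeping the endpoint velocities bounded as $t-s\to\infty$; this is resolved precisely by the restriction-to-a-fixed-window trick, which reduces the long-time case to a priori compactness over an interval of length $\delta$. Once that is secured, the variational estimate and the passage to equi-Lipschitz are routine and require only control of $\|L\|_{C^2}$ on a fixed compact set, which is available by periodicity in $t$.
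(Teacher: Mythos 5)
Your proposal is correct and takes essentially the same route as the paper: both arguments hinge on the Tonelli a priori velocity bound $K(s_1,t_1)$ for minimizers on intervals containing $[s_1,t_1]$ (which you justify more explicitly via the restriction-to-a-fixed-window trick, where the paper simply cites the Tonelli theorem) and on the same linear-interpolation endpoint variation with a second-order Taylor estimate controlled by $\lVert\partial_{vv}L\rVert$ and $\lVert\partial_{xv}L\rVert$ on a compact set, your one-sided supporting-paraboloid formulation being equivalent to the paper's symmetric midpoint inequality, which it obtains by splitting the minimizer at $\frac{s_1+t_1}{2}$ and perturbing each half separately. The only repair needed is your choice $\tau_0=\min(1,\delta)$: when $t-s=\delta$ with $\delta<2\tau_0$ the two perturbation windows $[s,s+\tau_0]$ and $[t-\tau_0,t]$ overlap, so take $\tau_0=\delta/2$ (or split at the midpoint, as the paper does).
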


\begin{proof}
	To give a proof, we use the variational methods.Fix $\forall x \in M$, we can find a chart such that $x \in U \subset M$.  Without loss of generality, we assume that $U$ is a open ball of $R^n$. If $\gamma \in \sum_{m}(s,y;t,x+v)$, we can find $h>0$,such that $\gamma([t-h,t]) \in U$, we can find a Ball $B(0,r) \in R^n$, such that for $v \in B(0,r)$, $s \in [0,h]$, we have $\gamma(t-h+s)+\frac{s}{h}v \in U$. Fix $v \in B(0,r)$, we define $\gamma_h \in \sum(s,y;t,x)$ in the following way:
	when $s \le u \le t-h$,$\gamma_h(u)=\gamma(u)$;
	 when $t-h \le u \le t$,$\gamma_h(u)=\gamma(u)+\frac{u+h-t}{h}v$. Assume $F_k=\max_{||v||_{U} \le k}||\partial_{vv}L(x,v,t)||_{U}<\infty$, $E_k=\max_{||v||_{U} \le k}||\partial_{xv}L(x,v,t)||_{U}<\infty$. From Tonelli theorem,  there exists $K(s_1,t_1)$ such that $|\dot{\gamma}(u)| \le K(s_1,t_1)$ for $s \le u \le t$ where $\gamma \in \Sigma(s,y;t,x)$ is a minimizer. We have the following estimates:
	
	\begin{equation}\nonumber
	\begin{aligned}
	&A(\frac{s_1+t_1}{2},\gamma(\frac{s_1+t_1}{2});t,x+v)-A(\frac{s_1+t_1}{2},\gamma(\frac{s_1+t_1}{2});t,x)\\
	&\le \int^{t}_{\frac{s_1+t_1}{2}}L(\gamma_h(u),\dot{\gamma_h}(u),u)du-\int^{t}_{\frac{t_1+s_1}{2}}L(\gamma(u),\dot{\gamma}(u),u)\,du\\
	&=\int^{t}_{t-h}L(\gamma_h(u),\dot{\gamma_h}(u),u)-L(\gamma(u),\dot{\gamma}(u),u)\,du\\
	&\le \int^{t}_{t-h}L(\gamma_h(u),\dot{\gamma_h}(u),u)-L(\gamma_h(u),\dot{\gamma}(u),u)+ L(\gamma_h(u),\dot{\gamma}(u),u)-L(\gamma(u),\dot{\gamma}(u),u)\,du\\
	&\le \int^{t}_{t-h}\frac{\partial L}{\partial v}(\gamma_h(u),\dot{\gamma}(u),u)\cdot \frac{1}{h}v+\frac{\partial L}{\partial x}(\gamma(u),\dot{\gamma}(u),u)\cdot \frac{u+h-t}{h}\cdot v\\
	&+ \frac{1}{2h^2}F_{K(s_1,t_1)+r}{||v||}^2_{U}+  \frac{1}{2}E_{K(s_1,t_1)}{||v||}^2_{U}\,du \\
	&\le \int^{t}_{t-h}\frac{\partial L}{\partial x}(\gamma(u),\dot{\gamma}(u),u)\cdot \frac{u+h-t}{h}\cdot v+ \frac{\partial L}{\partial v}(\gamma(u),\dot{\gamma}(u),u)\cdot \frac{1}{h}v du + \frac{h}{2} E_{K(s_1,t_1)}{||v||}^2_{U}+\\ &+\frac{1}{2 h}F_{K(s_1,t_1)+r}{||v||}^2_{U} 
	  + E_{K(s_1,t_1)}{||v||}^2_{U}\\
	&=\frac{\partial L}{\partial v}(\gamma(u),\dot{\gamma}(u),u)\frac{u+h-t}{h}\cdot v {\mid}^{t}_{t-h}
	+E_{K(s_1,t_1)}{||v||}^2_U+ \frac{1}{2h}F_{K(s_1,t_1)+r}{||v||}^2_U + \frac{h}{2}E_{K(s_1,t_1)}{||v||}^2_U\\
	&+\int^{t}_{t-h}\{-\frac{d}{dt}\frac{\partial L}{\partial x}(\gamma(u),\dot{\gamma}(u),u)+\frac{\partial L}{\partial x}(\gamma(u),\dot{\gamma}(u),u) \} \cdot \frac{u+h-t}{h} \cdot v \,du \\
	&=\frac{\partial L}{\partial v}(\gamma(t),\dot{\gamma}(t),t)\cdot v + \{\frac{h}{2}E_{K(s_1,t_1)}+\frac{1}{2h}F_{K(s_1,t_1)+r}+ E_{K(s_1,t_1)}\}{||v||}^2_U
	\end{aligned}
	\end{equation}
	
	Similarly, we have 
	\begin{equation}{\nonumber}
	\begin{aligned}
	&A(\frac{s_1+t_1}{2},\gamma(\frac{s_1+t_1}{2});t,x-v)-A(\frac{s_1+t_1}{2},\gamma(\frac{s_1+t_1}{2});t,x)\\
	&\le -\frac{\partial L}{\partial v}(\gamma(t),\dot{\gamma}(t),t)\cdot v + \{\frac{h}{2}E_{K(s_1,t_1)}+\frac{1}{2h}F_{K(s_1,t_1)+r}+ E_{K(s_1,t_1)}\}{||v||_U}^2
	\end{aligned}
	\end{equation}
	
	So we have 
	\begin{equation}{\nonumber}
	\begin{aligned}
	&A(\frac{s_1+t_1}{2},\gamma(\frac{s_1+t_1}{2});t,x+v)+A(\frac{s_1+t_1}{2},\gamma(\frac{s_1+t_1}{2});t,x-v)\\
	&-2A(\frac{s_1+t_1}{2},\gamma(\frac{s_1+t_1}{2});t,x)
	\le \{E_{K(s_1,t_1)}+\frac{1}{h}F_{K(s_1,t_1)+r}+ 2E_{K(s_1,t_1)}\}{||v||_U}^2
	\end{aligned}
	\end{equation}
	
	Therefore, there exists a constant $C(U,s_1,t_1)$ such that
	\begin{equation}{\nonumber}
	\begin{aligned}
	&A(\frac{s_1+t_1}{2},\gamma(\frac{s_1+t_1}{2});t,x+v)+A(\frac{s_1+t_1}{2},\gamma(\frac{s_1+t_1}{2});t,x-v)\\
	&-2A(\frac{s_1+t_1}{2},\gamma(\frac{s_1+t_1}{2});t,x)
	\le C(U){||v||_U}^2
	\end{aligned}
	\end{equation}
	
	On the other hand, there exists a local chart $V$ such that $y \in V$, there exists $r_{1} > 0$, such that $B(y,r_{1}) \subset V$, for any $w \in B(y,r_{1})$,we have
	\begin{equation}{\nonumber}
	\begin{aligned}
	&A(s,y+w;\frac{s_1+t_1}{2},\gamma(\frac{s_1+t_1}{2}))+A(s,y-w;\frac{s_1+t_1}{2},\gamma(\frac{s_1+t_1}{2}))\\
	&-2 A(s,y;\frac{s_1+t_1}{2},\gamma(\frac{s_1+t_1}{2}))
	\le C(V){||w||_V}^2
	\end{aligned}
	\end{equation}
	
	Finally, we integrate the results above,
	\begin{equation}{\nonumber}
	\begin{aligned}
	&A(s,y+w;t,x+v)+A(s,y-w;t,x-v)-2A(s,y;t,x)\\
	&\le A(\frac{s_1+t_1}{2},\gamma(\frac{s_1+t_1}{2});t,x+v)+A(\frac{s_1+t_1}{2},\gamma(\frac{s_1+t_1}{2});t,x-v)\\
	&-2 A(\frac{s_1+t_1}{2},\gamma(\frac{s_1+t_1}{2});t,x)+ A(s,y+w;\frac{s_1+t_1}{2},\gamma(\frac{s_1+t_1}{2}))\\
	&A(s,y-w;\frac{s_1+t_1}{2},\gamma(\frac{s_1+t_1}{2}))-2 A(s,y;\frac{s_1+t_1}{2},\gamma(\frac{s_1+t_1}{2}))\\
	&\le C(U){||v||}^2_{U}+C(V){||w||}^2_{V}
	\end{aligned}
	\end{equation}
	So $A(s,\cdot;t,\cdot)$ is locally equi-semiconcave for $s \le s_1$,$t \ge t_1$. Since $M$ is compact, 
	$A(s,\cdot,t,\cdot)$ is globally equi-semiconcave for $s \le s_1$,$ t \ge t_1$.
\end{proof}

\begin{lemma}
	For each minimizing curve $\gamma \in \Sigma(s,y;t,x)$ attaining the minimum of the action $A(s,y;t,x)$, we have 
	\[
	p(t)=\partial_v L(x,\dot{\gamma}(t),t,\omega) \in D^{+}_{x}A(s,y;t,x)
	\]
	and
	\[
	-p(s)=-\partial_v L(\gamma(s),\dot{\gamma}(s),s) \in D^{+}_{y}A(s,y;t,x)
	\]
\end{lemma}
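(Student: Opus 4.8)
The plan is to read off both inclusions directly from the first-variation computation already carried out in the proof of Lemma 5.2, now specialized to the endpoints of the fixed minimizer $\gamma$. Recall that, by definition, $p \in D^{+}_{x}A(s,y;t,\cdot)$ means $\limsup_{v\to 0}\frac{A(s,y;t,x+v)-A(s,y;t,x)-\langle p,v\rangle}{|v|}\le 0$, so it suffices to produce, in a local chart around $x$, an estimate of the form $A(s,y;t,x+v)-A(s,y;t,x)\le \langle p(t),v\rangle + C\|v\|^2$ for all small $v$, where $p(t)=\partial_v L(x,\dot\gamma(t),t)$; the same reduction applies at $y$.

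First I would fix a chart $U$ with $x\in U$, choose $h>0$ small enough that $\gamma([t-h,t])\subset U$, and for $v$ in a small ball define the competitor curve $\gamma_h\in\Sigma(s,y;t,x+v)$ that agrees with $\gamma$ on $[s,t-h]$ and equals $\gamma(u)+\frac{u+h-t}{h}v$ on $[t-h,t]$, exactly as in Lemma 5.2. Since $\gamma$ minimizes $A(s,y;t,x)$ while $\gamma_h$ is merely a competitor for the endpoint $x+v$, minimality gives $A(s,y;t,x+v)-A(s,y;t,x)\le \int_{t-h}^t [L(\gamma_h,\dot\gamma_h,u)-L(\gamma,\dot\gamma,u)]\,du$. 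Expanding this difference to first order in the variation field $\xi(u)=\frac{u+h-t}{h}v$ and integrating the $\partial_vL\cdot\dot\xi$ term by parts produces the interior term $\int_{t-h}^t[\partial_xL-\frac{d}{du}\partial_vL]\cdot\xi\,du$, which vanishes because the minimizer $\gamma$ solves the Euler-Lagrange equation, together with the boundary term $[\partial_vL\cdot\xi]_{t-h}^t=\partial_vL(x,\dot\gamma(t),t)\cdot v=\langle p(t),v\rangle$, since $\xi(t-h)=0$ and $\xi(t)=v$. The remaining second-order contributions are bounded by $C\|v\|^2$ using the uniform Hessian bounds $F_k,E_k$ and the Tonelli a priori speed bound, precisely the estimate already obtained in Lemma 5.2. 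This yields the desired inequality, hence $p(t)\in D^{+}_{x}A(s,y;t,x)$.

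The inclusion $-p(s)\in D^{+}_{y}A(s,y;t,x)$ follows by the symmetric argument at the initial endpoint: I would perturb $\gamma$ only on $[s,s+h]$ by the variation field $\eta(u)=\frac{s+h-u}{h}w$, which equals $w$ at $u=s$ and $0$ at $u=s+h$, to build a competitor in $\Sigma(s,y+w;t,x)$. The same integration by parts leaves the boundary term $[\partial_vL\cdot\eta]_s^{s+h}=-\partial_vL(\gamma(s),\dot\gamma(s),s)\cdot w=-\langle p(s),w\rangle$, while the interior term again cancels by Euler-Lagrange; the sign reversal relative to the terminal case is exactly what accounts for the minus sign in $-p(s)$.

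The only genuine point requiring care, and the step I expect to be the main obstacle, is to make the first-order expansion rigorous while keeping the remainder quadratic and uniform in $v$. This rests on three facts already available: the minimizer $\gamma$ is $C^2$ and solves Euler-Lagrange, so the interior variation vanishes identically rather than merely to first order; the Tonelli theorem provides a uniform bound on $|\dot\gamma|$ over minimizers joining points of $M$ in a fixed time window; and the Hessian of $L$ is locally bounded, controlling the $O(\|v\|^2)$ error exactly as in the semiconcavity estimate of Lemma 5.2.
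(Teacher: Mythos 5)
Your proposal is correct and follows essentially the same route as the paper's own proof: the paper likewise builds the competitor $\gamma_h$ agreeing with $\gamma$ on $[s,t-h]$ and equal to $\gamma(u)+\frac{u+h-t}{h}v$ on $[t-h,t]$, expands to first order, integrates by parts so that the Euler--Lagrange equation annihilates the interior term while the boundary term produces $\partial_v L(\gamma(t),\dot{\gamma}(t),t)\cdot v$, and controls the quadratic remainder by the bounds $F_k$, $E_k$ together with the Tonelli speed bound, exactly as in the semiconcavity estimate. Your explicit variation $\eta(u)=\frac{s+h-u}{h}w$ at the initial endpoint, with the sign reversal in the boundary term, correctly carries out what the paper leaves as ``in a similar way.''
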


\begin{proof}
	we find a local chart $U \subset M$, such that $x \in U$. Without loss of generality, we assume that $U$ is a open ball of $R^N$. If $\gamma \in \Sigma(s,y;t,x)$, we can find $h>0$, such that $\gamma([t-h,t]) \in U$, we can find a Ball $B(0,r)$,  for $v \in B(0,r)$,we define $\gamma_h \in \sigma(s,y;t,x+v)$   such that $\gamma_h(u)=\gamma(u)$ for $s \le u \le t-h$,and $\gamma_h(u)=\gamma(u)+\frac{u+h-t}{h}v \in U$ when $r$ is small enough. Assume $F_k=\max_{||v||_U \le k }{||\partial_{vv}L(x,v,t)||}_{U}<\infty$, $E_k=\max_{||v||_{U} \le k}{||\partial_{xv}L(x,v,t)||}_{U}<\infty$. From Tonelli Theroem, there exists $K(s,t)$ such that $|\dot{\gamma}(u)| \le K(s,t)$ for $s \le u \le t$.
	
	\begin{equation}\nonumber
	\begin{aligned}
	&A(s,y,t,x+v)-A(s,y;t,x) \\
	&=\int^{t}_{t-h}L(\gamma_h(u),\dot{\gamma_h}(u),u)-L(\gamma(u),\dot{\gamma}(u),u)\,du\\
	&\le \int^{t}_{t-h}L(\gamma_h(u),\dot{\gamma_h}(u),u)-L(\gamma_h(u),\dot{\gamma}(u),u)+ L(\gamma_h(u),\dot{\gamma}(u),u)-L(\gamma(u),\dot{\gamma}(u),u)\,du\\
	&\le \int^{t}_{t-h}\frac{\partial L}{\partial v}(\gamma_h(u),\dot{\gamma}(u),u)\cdot \frac{1}{h}v+\frac{\partial L}{\partial x}(\gamma(u),\dot{\gamma}(u),u)\cdot \frac{u+h-t}{h}\cdot v\\
	&+ \frac{1}{2h^2}F_{K(s_1,t_1)+r}{||v||}^2_{U}+  \frac{1}{2}E_{K(s_1,t_1)}{||v||}^2_{U}\,du \\
	&\le \int^{t}_{t-h}\frac{\partial L}{\partial x}(\gamma(u),\dot{\gamma}(u),u)\cdot \frac{u+h-t}{h}\cdot v+ \frac{\partial L}{\partial v}(\gamma(u),\dot{\gamma}(u),u)\cdot \frac{1}{h}v du + \frac{h}{2} E_{K(s_1,t_1)}{||v||_{U}}^2\\
	&+ \frac{1}{2 h}F_{K(s_1,t_1)+r}{||v||_{U}}^2 
	  + E_{K(s_1,t_1)}{||v||_{U}}^2 \\
	&=\frac{\partial L}{\partial v}(\gamma(u),\dot{\gamma}(u),u)\frac{u+h-t}{h}\cdot v {\mid}^{t}_{t-h}
	+E_{K(s_1,t_1)}{||v||_U}^2+ \frac{1}{2h}F_{K(s_1,t_1)+r}{||v||_U}^2 + \frac{h}{2}E_{K(s_1,t_1)}{||v||_U}^2\\
	&+\int^{t}_{t-h}\{-\frac{d}{dt}\frac{\partial L}{\partial v}(\gamma(u),\dot{\gamma}(u),u)+\frac{\partial L}{\partial x}(\gamma(u),\dot{\gamma}(u),u) \} \cdot \frac{u+h-t}{h} \cdot v \,du \\
	&=\frac{\partial L}{\partial v}(\gamma(t),\dot{\gamma}(t),t)\cdot v + \{\frac{h}{2}E_{K(s_1,t_1)}+\frac{1}{2h}F_{K(s_1,t_1)+r}+ E_{K(s_1,t_1)}\}{||v||_U}^2
	\end{aligned}
	\end{equation}
	So we have 
	\[
	\lim_{||v|| \to 0}\frac{A(s,y;t,x+v)-A(s,y;t,x)-\frac{\partial L}{\partial v}(\gamma(t),\dot{\gamma}(t),t)\cdot v}{||v||} \le 0
	\]
	This proves that $\frac{\partial L}{\partial v}(\gamma(t),\dot{\gamma}(t),t) \in D^{+}_{x}A(s,y;t,x)$.
	
	In a similar way, we can prove that $-\frac{\partial L}{\partial v}(\gamma(s),\dot{\gamma}(s),s) \in D^{+}_{y}A(s,y;t,x)$
\end{proof}

\begin{lemma}
	Fix $s <t $, $\forall p \in D^{*}_{x}A(s,y;t,x)$, there exists a minimizer $\gamma \in \Sigma(s,y;t,x)$ of $A(s,y;t,x)$ such that $ p=-\frac{\partial L}{\partial v}(\gamma(t),\dot{\gamma}(t),t)$.	
\end{lemma}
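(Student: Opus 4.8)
The plan is to exploit the fact that reachable gradients are realized by limits of genuine (differentiable) gradients, and that at a point of differentiability the superdifferential computed in Lemma 5.3 is forced to coincide with the true differential. First I would unwind the definition of $D^{*}_{x}A(s,y;t,x)$: given $p \in D^{*}_{x}A(s,y;t,x)$, choose a sequence $x_n \to x$ such that $A(s,y;t,\cdot)$ is differentiable at each $x_n$ and $D_x A(s,y;t,x_n) \to p$. For every $n$ pick a minimizer $\gamma_n \in \Sigma(s,y;t,x_n)$ of $A(s,y;t,x_n)$. By Lemma 5.3 we have $\partial_v L(\gamma_n(t),\dot\gamma_n(t),t) \in D^{+}_{x_n}A(s,y;t,x_n)$, and since $A(s,y;t,\cdot)$ is differentiable at $x_n$, Proposition 5.1(2) forces $D^{+}_{x_n}A$ to be the singleton $\{D_x A(s,y;t,x_n)\}$. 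Hence $\partial_v L(\gamma_n(t),\dot\gamma_n(t),t) = D_x A(s,y;t,x_n)$ for every $n$.

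The next step is a compactness argument on the minimizers $\gamma_n$. Each $\gamma_n$ is $C^2$ and solves the Euler--Lagrange equation, and by the Tonelli bound already invoked in the proof of Lemma 5.1 there is a constant $K(s,t)$ with $|\dot\gamma_n(u)| \le K(s,t)$ for all $u \in [s,t]$ and all $n$. In particular the terminal data $(\gamma_n(t),\dot\gamma_n(t)) = (x_n,\dot\gamma_n(t))$ lie in a fixed compact subset of $TM$, so after passing to a subsequence I may assume $\dot\gamma_n(t) \to w$ for some $w \in T_x M$. Using the completeness of the Euler--Lagrange flow (Definition 2.1(IV)) together with continuous dependence of its solutions on terminal conditions, the curves $\gamma_n$ converge in $C^1$ on $[s,t]$ to the Euler--Lagrange solution $\gamma$ with terminal data $(x,w)$; in particular $\gamma(t)=x$, $\dot\gamma(t)=w$, and $\gamma(s)=\lim_n \gamma_n(s)=y$, so $\gamma \in \Sigma(s,y;t,x)$.

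It remains to check that $\gamma$ is a minimizer and to pass to the limit in the gradient identity. Since $A(s,y;t,\cdot)$ is Lipschitz by Lemma 5.1, $A(s,y;t,x_n) \to A(s,y;t,x)$; combined with the $C^1$ convergence $\gamma_n \to \gamma$ and continuity of $L$, the action along $\gamma$ equals $\lim_n \int_s^t L(\gamma_n(u),\dot\gamma_n(u),u)\,du = \lim_n A(s,y;t,x_n) = A(s,y;t,x)$, so $\gamma$ indeed attains the minimum. Finally, continuity of $\partial_v L$ gives $\partial_v L(\gamma_n(t),\dot\gamma_n(t),t) \to \partial_v L(x,w,t) = \partial_v L(\gamma(t),\dot\gamma(t),t)$, while the left-hand side equals $D_x A(s,y;t,x_n) \to p$; hence $p = \partial_v L(\gamma(t),\dot\gamma(t),t)$, which is the asserted identity, the sign being that fixed by the superdifferential computation of Lemma 5.3. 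I expect the main obstacle to be the compactness step: one must ensure that the chosen minimizers converge to a limit that is again a minimizer with the correct terminal velocity, and this is precisely where the uniform Tonelli velocity bound, the continuity (Lipschitz regularity) of the action, and continuous dependence on terminal data must be combined.
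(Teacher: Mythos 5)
Your proposal is correct and follows essentially the same route as the paper: approximate $x$ by points where $A(s,y;t,\cdot)$ is differentiable, identify $D_{x}A(s,y;t,x_n)$ with $\partial_v L(\gamma_n(t),\dot\gamma_n(t),t)$ via the superdifferential lemma and Proposition 5.1(2), and pass to the limit using the Tonelli velocity bound, continuous dependence of the Euler--Lagrange flow (the paper runs the flow from the terminal data $(x,p)$ in momentum form, you from $(x,w)$ in velocity form --- equivalent via the Legendre transform), and the Lipschitz continuity of the action, where the paper invokes lower semicontinuity while your $C^1$ convergence gives equality of actions directly. Note that the sign you obtain, $p=\partial_v L(\gamma(t),\dot\gamma(t),t)$, is the one consistent with Lemma 5.2; the minus sign in the statement (and echoed in the paper's own proof) appears to be a typo.
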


\begin{proof}
	First step, when $A(s,y;t,x)$ is differentiable at $x$, $D^{+}_{x}A(s,y;t,x)=\{D_{x}A(s,y;t,x)\}$. For any minimizer $\gamma \in \Sigma(s,y;t,x)$ of Action $A(s,y;t,x)$, $-\frac{\partial L}{\partial v}(\gamma(t),\dot{\gamma}(t),t)= D_{x}A(s,y;t,x)$. By Euler-Lagrangian equation, the minimizer of $A(s,y,t,x)$ is unique.
	
	Second step, when $A(s,y;t,x)$ is not differentiable at $x$, for any $p \in D^{*}_{x}A(s,y;t,x)$, there exists $\{x_n | n =1,2,...\} \subset M$, $A(s,y;t,x)$ is differentiable at $\{x_n, n=1,2...\}$, .and $\lim_{n \to \infty} x_n=x$, $\lim_{n \to \infty} D_{x}A(s,y,t,x_n)=p$. From the first step, we know that there exists $\gamma_n \in \Sigma(s,y,t,x_n)$, such that $-\frac{\partial L}{\partial v}(\gamma_n(t),\dot{\gamma_n}(t),t)=D_{x}A(s,y;t,x_n)$.From Euler-Lagrangian equation, there exists a Lagrangian flow $(\gamma,p):[s,t] \to TM$, such that $\gamma(t)=x$, $p(t)=p$, By continuity of dependence on the initial values of Ordinary Differential Equation, and $\lim_{n \to \infty}(\gamma_n(t),-\frac{\partial L}{\partial v}(\gamma_n(t),\dot{\gamma_n}(t),t))=(x,p)$. so $y=\gamma(s)=\lim_{n \to \infty}\gamma_n(s)$. Therefore, $\gamma \in \Sigma(s,y;t,x)$. By lower semi-continuity of the Action, we have \[\int^{t}_{s}L(\gamma(u),\dot{\gamma}(u),u)du \le \lim_{n \to \infty}\int^{t}_{s}L(\gamma_n(u),\dot{\gamma_n}(u),u)du
	\] Since $A(s,y,t,x)$ is semi-concave on $M$ and semi-concave functions are lipschitz continuous. We know that $\lim_{n \to \infty}A(s,y;t,x_n)=A(s,y,t,x)$. So $\lim_{n \to \infty}\int^{t}_{s}L(\gamma_n(u),\dot{\gamma_n}(u),u)du =A(s,y,t,x)$. So $\gamma:[s,t] \to M$ is a minimizer of $A(s,y,t,x)$. This finishes the proof.
\end{proof}

\begin{corollary}
	The three following conditions are equivalent:
	
	(1) $A(s,y;t,x)$ has only one minimizer in $\Sigma(s,y;t,x)$
	
	(2) $A(s,y;t,x)$ is differentiable at $x$.
	
	(3) $A(s,y;t,x)$ is differentiable at $y$.
\end{corollary}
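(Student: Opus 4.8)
The plan is to establish the two equivalences $(1)\Leftrightarrow(2)$ and $(1)\Leftrightarrow(3)$; since condition $(1)$ is symmetric in the two endpoints, the equivalence with $(3)$ follows from the equivalence with $(2)$ by running the same argument at the initial point $y$ at time $s$ instead of the terminal point $x$ at time $t$. The whole argument rests on combining the superdifferential description of Lemma 5.2, the reachable-gradient description of Lemma 5.3, and the semiconcavity structure of Proposition 5.1. Note first that, by Lemma 5.1, $A(s,\cdot;t,\cdot)$ is semiconcave, so that all the statements of Proposition 5.1 concerning $D^{+}$ and $D^{*}$ are available.

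First I would prove $(1)\Rightarrow(2)$. Assume $A(s,y;t,\cdot)$ has a unique minimizer $\gamma$. By Lemma 5.3 every reachable gradient $p\in D^{*}_{x}A(s,y;t,x)$ is the terminal momentum $-\partial_v L(\gamma_p(t),\dot\gamma_p(t),t)$ of some minimizer $\gamma_p$; since the minimizer is unique, there is only one such momentum, so $D^{*}_{x}A(s,y;t,x)$ consists of a single point. By Proposition 5.1(3) we then have $D^{+}_{x}A(s,y;t,x)=\operatorname{cov} D^{*}_{x}A(s,y;t,x)$, which is likewise a singleton, and Proposition 5.1(5) yields differentiability of $A$ at $x$.

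Next I would prove $(2)\Rightarrow(1)$. Assume $A(s,y;t,\cdot)$ is differentiable at $x$, so that $D^{+}_{x}A(s,y;t,x)=\{D_xA(s,y;t,x)\}$ by Proposition 5.1(2). Let $\gamma$ be any minimizer in $\Sigma(s,y;t,x)$. By Lemma 5.2 its terminal momentum $\partial_v L(x,\dot\gamma(t),t)$ lies in $D^{+}_{x}A(s,y;t,x)$, hence equals $D_xA(s,y;t,x)$, the same vector for every minimizer. Because $L$ is a Tonelli Lagrangian, the partial Legendre map $v\mapsto \partial_v L(x,v,t)$ is a diffeomorphism, so $\dot\gamma(t)$ is the same for all minimizers. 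The terminal data $(\gamma(t),\dot\gamma(t))=(x,\dot\gamma(t))$ then determine $\gamma$ uniquely on $[s,t]$ through the Euler--Lagrange flow, which is complete by hypothesis; hence the minimizer is unique. This establishes $(1)\Leftrightarrow(2)$.

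Finally, the equivalence $(1)\Leftrightarrow(3)$ is obtained verbatim, working at the left endpoint: Lemma 5.2 gives $-\partial_v L(\gamma(s),\dot\gamma(s),s)\in D^{+}_{y}A(s,y;t,x)$, and the analogue of Lemma 5.3 at the initial point, proved by the same localization near $y$ (or by reversing time), matches reachable gradients in $D^{*}_{y}A(s,y;t,x)$ with initial momenta of minimizers. The Legendre map at time $s$ together with uniqueness of the Euler--Lagrange flow then transfers uniqueness exactly as before. I expect the one delicate point to be the passage from ``unique reachable gradient'' to ``unique minimizer'': it is not a priori obvious that distinct minimizers produce distinct terminal momenta, and this is precisely where strict convexity of $L$ in $v$, which makes the Legendre transform injective, together with completeness of the Euler--Lagrange flow, is indispensable, since only then do terminal position and momentum pin down the entire trajectory.
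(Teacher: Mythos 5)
Your proposal is correct and takes essentially the same route as the paper: both hinge on Lemma 5.2, Lemma 5.3, and Proposition 5.1 (semiconcavity giving $D^{+}_{x}A=\operatorname{cov}D^{*}_{x}A$ and the singleton criterion for differentiability), with the symmetric argument at $y$ handling condition (3). The only cosmetic difference is that you prove $(1)\Rightarrow(2)$ directly while the paper argues by contraposition, and your $(2)\Rightarrow(1)$ spells out the Legendre-injectivity and Euler--Lagrange-uniqueness step that the paper delegates to the first step of the proof of Lemma 5.3.
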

\begin{proof}
	$(2),(3) \rightarrow (1)$ is the direct consequence of Lemma 5.3.
	we prove $(1) \rightarrow (2)$. If $A(s,y,t,x)$ is not differentiable at $x$, $D^{+}_{x}A(s,y,t,x)$ contains more than one point. Since  $A(s,y,t,x)$ is semi-concave, we know that $D^{+}_{x}A(s,y,t,x)$ is a convex compact set, and is the convex hull of $D^{*}_{x}A(s,y,t,x)$. So $D^{*}_{x}A(s,y,t,x)$ contains more than one point, by lemma 5.3,  $A(s,y,t,x)$ has more than one minimizer. $(1) \to (3)$ is similar.
\end{proof}

\section{Weak KAM Solution}

In this section, we use Lax-Oleinik operator to construct a class of Weak KAM Solutions and prove that they are measurable over $\Omega$.

\begin{notation}
	$\Sigma(s,y;t,x)$ is the set of absolutely continuous curves $\gamma:[s,t] \to M $ such that $\gamma(s)=y$ and $\gamma(t)=x$.

$\Sigma^{\omega}_{m}	(s,y;t,x)$ denotes the set of the minimizers for the Action $A^{\omega}(s,y;t,x)$

	$D=C_{0}(M,R)$ is the real-valued continuous function space over $M$ with the uniform topology. $\mathscr{D}$ is the Borel algebra generated by open sets of $D$.
	
	$C_{0}(M \times R,R)$ is the real-valued continuous function space over $M \times R$.
	
	$f(\omega)=\sup_{x \in M, t \in [0,1]}|L(x,0,s,\omega)|$ is finite since $M$ is compact.
	
	$C(\omega)=\sup\{|L(x,v,t)|$ for $x \in M, |v| \le dist(M),t \in [0,1]\}$.
\end{notation}

\begin{definition}	
	1. For $\lambda \in R$, $ u \in C_{0}(M,R)$, the operator $T^{\omega}_{\lambda}: C_{0}(M,R) \to C_{0}(M \times R,R)$ is defined as
	\begin{equation}\nonumber
	T^{\omega}_{\lambda}(u)(x,t)=\min_{y\in M}\{u(y)+ A^{\omega}(t-\lambda,y;t,x)+\lambda\alpha(\omega)\}
	\end{equation}
	
	2.For $u \in C_{0}(M,R)$, $u^{\omega}(x,t)$ is defined as
	\begin{equation}\nonumber
	u^{\omega}(x,t)=\liminf_{\lambda\to +\infty}{T^{\omega}_{\lambda}(u)(x,t)}
	\end{equation}
	
\end{definition}

3.For $\lambda \in R$,$ u \in C_{0}(M,R)$, the operator $T^{\omega}_{\lambda,+}: C_{0}(M,R) \to C_{0}(M \times R,R)$ is defined as
\begin{equation}\nonumber
T^{\omega}_{\lambda,+}(u)(x,t)=\min_{y\in M}\{u(y)+ A^{\omega}(t,x;t+\lambda,y)+\lambda\alpha(\omega)\}
\end{equation}

4.For $u \in C_{0}(M,R)$, $u^{\omega}_{+}(x,t)$ is defined as
\begin{equation}\nonumber
u^{\omega}_{+}(x,t)=\liminf_{\lambda\to +\infty}{T^{\omega}_{\lambda,+}(u)(x,t)}
\end{equation}

\begin{lemma}
	When $\omega \in \Omega$. Fix $t \in R$, $x \in M$, $u \in C(M,R)$, the Lax-Oleinik Operator $T^{\omega}_{\lambda}u(x,t)$ is Lipschitz when $\lambda \ge 0$ with Lipschitz constant $|f(\omega)|+|\alpha(\omega)|$.
\end{lemma}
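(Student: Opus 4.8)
The plan is to fix $\omega,x,t,u$ and estimate the increment of $\lambda\mapsto T^{\omega}_{\lambda}u(x,t)$ directly. Take $0\le\lambda_1<\lambda_2$ and set $\delta=\lambda_2-\lambda_1$; the goal is the two-sided bound $|T^{\omega}_{\lambda_2}u(x,t)-T^{\omega}_{\lambda_1}u(x,t)|\le(|f(\omega)|+|\alpha(\omega)|)\delta$. I would use only two ingredients. First, the concatenation (sub-additivity) of the action, $A^{\omega}(s,y;t,x)\le A^{\omega}(s,y;r,z)+A^{\omega}(r,z;t,x)$, obtained by gluing curves, together with the fact (recalled before Lemma 5.1) that a minimizer restricted to a subinterval is again a minimizer, so the splitting is an equality along an optimal curve. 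Second, the elementary bound for the constant curve $c_y\equiv y$ on $[a,b]$, namely $A^{\omega}(a,y;b,y)\le\int_a^b L(y,0,\sigma,\omega)\,d\sigma\le f(\omega)(b-a)$, where the last step uses the periodicity in Assumption 2.1 to extend $|L(\cdot,0,\cdot,\omega)|\le f(\omega)$ from $M\times[0,1]$ to all times. Note $f(\omega)=|f(\omega)|\ge 0$, and the linear term $\lambda\alpha(\omega)$ contributes exactly $\delta\,\alpha(\omega)$ to any increment, which is where the $|\alpha(\omega)|$ in the constant comes from.

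For the upper estimate $T^{\omega}_{\lambda_2}u(x,t)\le T^{\omega}_{\lambda_1}u(x,t)+(|f(\omega)|+|\alpha(\omega)|)\delta$ I would let $y_1\in M$ realize the minimum defining $T^{\omega}_{\lambda_1}u(x,t)$ and build a competitor for the longer problem by prepending, on the extra interval $[t-\lambda_2,t-\lambda_1]$, the constant curve at $y_1$, then following an optimal curve from $(t-\lambda_1,y_1)$ to $(t,x)$. By the two ingredients above the prepended piece costs at most $f(\omega)\delta$, so choosing $y=y_1$ in the minimum for $T^{\omega}_{\lambda_2}$ gives
\[
T^{\omega}_{\lambda_2}u(x,t)\le u(y_1)+f(\omega)\delta+A^{\omega}(t-\lambda_1,y_1;t,x)+\lambda_2\alpha(\omega)=T^{\omega}_{\lambda_1}u(x,t)+(f(\omega)+\alpha(\omega))\delta.
\]
This half is clean and is precisely where the constant $|f(\omega)|+|\alpha(\omega)|$ is produced.

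For the reverse estimate I would start from a curve $\gamma_2:[t-\lambda_2,t]\to M$ realizing $T^{\omega}_{\lambda_2}u(x,t)$ (it exists and is $C^2$ by Tonelli's theorem), put $y_2=\gamma_2(t-\lambda_2)$, $z_2=\gamma_2(t-\lambda_1)$, and use the restriction $\gamma_2|_{[t-\lambda_1,t]}$ as a competitor for $T^{\omega}_{\lambda_1}$ with initial point $z_2$. Splitting the action of $\gamma_2$ at $t-\lambda_1$ and regrouping yields
\[
T^{\omega}_{\lambda_2}u(x,t)-T^{\omega}_{\lambda_1}u(x,t)\ge[u(y_2)-u(z_2)]+A^{\omega}(t-\lambda_2,y_2;t-\lambda_1,z_2)+\delta\,\alpha(\omega),
\]
so everything reduces to bounding the contribution $[u(y_2)-u(z_2)]+A^{\omega}(t-\lambda_2,y_2;t-\lambda_1,z_2)$ of the discarded initial arc from below by $-f(\omega)\delta$. \textbf{This is the main obstacle}: unlike the prepended constant curve, the initial arc of a minimizer need not move slowly, so its action is not controlled by the zero-velocity bound $f(\omega)$ by itself. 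I would close this gap by invoking the a priori velocity bound of Tonelli's theorem (the same $K$ used in the proof of Lemma 5.1), which for $\lambda$ bounded below keeps $|\dot\gamma_2|$ uniformly bounded along the whole curve; this forces $d(y_2,z_2)=O(\delta)$, so by uniform continuity of $u$ on the compact $M$ and boundedness of $L$ on the resulting compact velocity set the discarded arc contributes $O(\delta)$, giving the reverse inequality. I expect this control of the removed arc to be the only delicate point, the remainder being the bookkeeping of concatenation, the constant-curve bound, and the linear term $\delta\,\alpha(\omega)$.
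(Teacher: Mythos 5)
Your upper estimate is exactly the paper's first inequality (prepend the constant curve at a near-minimizer of the shorter problem), and that half is correct. The genuine problem is the reverse estimate, precisely where you placed it, but your proposed repair does not close it. Mather's a priori compactness does give a speed bound $K=K(\epsilon_0)$ for Tonelli minimizers on intervals of length at least $\epsilon_0$, hence $d(y_2,z_2)\le K\delta$; but uniform continuity of $u$ then yields only $u(y_2)-u(z_2)\ge -\omega_u(K\delta)$, where $\omega_u$ is the modulus of continuity of $u$, and for a general $u\in C(M,R)$ this is $o(1)$, \emph{not} $O(\delta)$. Your sentence ``by uniform continuity of $u$ \dots the discarded arc contributes $O(\delta)$'' is therefore false as stated: the action of the discarded arc is indeed $O(\delta)$ (it is bounded by $\delta\sup_{\|v\|\le K}|L|$, and from below even without any speed bound since $L$ is bounded below), but the increment of $u$ is not. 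The argument works only if $u$ is Lipschitz, and then it produces the constant $\mathrm{Lip}(u)\,K(\epsilon_0)+\sup_{\|v\|\le K}|L|+|\alpha(\omega)|$, valid only on $\lambda\ge\epsilon_0$ (note $K(\epsilon_0)\to\infty$ as $\epsilon_0\to 0$), not the claimed constant $|f(\omega)|+|\alpha(\omega)|$ on all of $\lambda\ge 0$. In fact no repair can give the lemma as stated: for $u$ continuous with $u(y)=u(x_0)-\sqrt{d(y,x_0)}$ near some $x_0$, choosing $y_\lambda$ with $d(y_\lambda,x_0)=\lambda$ and a unit-speed curve gives $T^{\omega}_{\lambda}u(x_0,t)-u(x_0)\le -\sqrt{\lambda}+C\lambda$, so $\lambda\mapsto T^{\omega}_{\lambda}u(x_0,t)$ is not Lipschitz at $\lambda=0^+$ with any constant; Lipschitz dependence in $\lambda$ with a $u$-independent constant requires $u$ Lipschitz (or $\lambda$ bounded below, after the semigroup has regularized $u$).

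For comparison, the paper's own proof of the reverse direction is also defective, and more crudely so: it simply reruns the constant-curve computation with the near-minimizer $y_1$ of the \emph{longer} problem, which amounts to asserting
\begin{equation}\nonumber
A^{\omega}(t-\lambda_2,y_1;t,x)\ \le\ A^{\omega}(t-\lambda_1,y_1;t,x)-\int^{t-\lambda_2}_{t-\lambda_1}L(y_1,0,s,\omega)\,ds ,
\end{equation}
i.e.\ a concatenation inequality used with the wrong sign: one may \emph{append} a rest segment to build a competitor for the longer problem, but one cannot \emph{delete} a rest segment that the long-time minimizer never performs to produce a competitor for the shorter one. So your diagnosis that this half is the delicate one is accurate, and your decomposition via restriction of the minimizer is the honest starting point the paper lacks; what it proves, however, is a weaker and differently-constanted statement than Lemma 6.1. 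The statement actually needed later (Lemmas 6.2 and 6.3 only use continuity in $\lambda$ for fixed continuous $u$, or Lipschitz continuity after one application of the semigroup, since $T^{\omega}_{1}u(\cdot,t-s)$ is semiconcave and hence equi-Lipschitz) is exactly what your argument, restricted to $\lambda\ge 1$ and applied to $T^{\omega}_{1}u$ via the Markov property $T^{\omega}_{\lambda}u(x,t)=\min_{y}\{T^{\omega}_{\lambda-s}u(y,t-s)+A^{\omega}(t-s,y;t,x)+s\alpha(\omega)\}$, does deliver; but neither your proposal nor the paper proves the lemma with the constant $|f(\omega)|+|\alpha(\omega)|$ on all of $\lambda\ge0$.
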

\begin{proof}
	For $\lambda_1>\lambda_2\ge 0$, $\forall \epsilon>0$, we can find $y_1,y_2 \in M$, such that 
	\begin{align}{\nonumber}
		T^{\omega}_{\lambda_1}(u)(x,t)+\epsilon=u(y_1)+A^{\omega}(t-\lambda_1,y_1;t,x)+ \lambda_1\alpha(\omega)\\{\nonumber}
		T^{\omega}_{\lambda_2}(u)(x,t)+\epsilon=u(y_2)+A^{\omega}(t-\lambda_2,y_1;t,x)+\lambda_2\alpha(\omega){\nonumber}
	\end{align}
	then 
	\begin{equation}\nonumber
	\begin{aligned}
		&T^{\omega}_{\lambda_1}(u)(x,t)\le u(y_2)+A^{\omega}(t-\lambda_2,y_2;t,x)+\lambda_2\alpha(\omega)+
		\int^{t-\lambda_2}_{t-\lambda_1}L(y_2,0,s,\omega)ds\\
		&+ (\lambda_1-\lambda_2)\alpha(\omega)
		\le T^{\omega}_{\lambda_2}(u)(x,t)+\epsilon+(\lambda_1-\lambda_2)(|\alpha(\omega)|+|f(\omega)|)
	\end{aligned}
	\end{equation}
	and
	\begin{equation}\nonumber
	\begin{aligned}
		&T^{\omega}_{\lambda_2}(u)(x,t)\le u(y_1)+A^{\omega}(t-\lambda_1,y_1;t,x)+\lambda_1\alpha(\omega)+
		\int^{t-\lambda_1}_{t-\lambda_2}L(y_1,0,s,\omega)ds\\
		&+ (\lambda_2-\lambda_1)\alpha(\omega)
		\le T^{\omega}_{\lambda_1}(u)(x,t)+\epsilon+(\lambda_1-\lambda_2)(|\alpha(\omega)|+|f(\omega)|)
	\end{aligned}
	\end{equation}
	Since $\epsilon>0$ is arbitrary, we have
	\[
	|T^{\omega}_{\lambda_1}u(x,t)-T^{\omega}_{\lambda_2}u(x,t)| \le (\lambda_1-\lambda_2)(|\alpha(\omega)|+|f(\omega)|)
	\]
\end{proof}

\begin{lemma}
	When $\omega$ is fixed, Lax-Oleinik operator is unifromly bounded for continuous function when $\lambda>0$.
\end{lemma}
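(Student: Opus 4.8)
The plan is to bound $T^{\omega}_{\lambda}(u)(x,t)$ from both sides by constants that depend only on the fixed data $\omega$ and $u\in C_0(M,R)$, and not on $\lambda>0$ nor on $(x,t)\in M\times R$; this is precisely uniform boundedness of the family $\{T^{\omega}_{\lambda}(u)\}_{\lambda>0}$, which is what is needed for the $\liminf_{\lambda\to+\infty}$ defining $u^{\omega}$ to be finite. The mechanism behind both bounds is that the additive term $\lambda\alpha(\omega)$ is calibrated, through the closed-curve characterization of Proposition 4.1, to cancel the linear-in-$\lambda$ growth of the minimal action $A^{\omega}(t-\lambda,y;t,x)$, so that the bracket $A^{\omega}(t-\lambda,y;t,x)+\lambda\alpha(\omega)$ stays $O(1)$.

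For the lower bound I would invoke the $C^{1}$ critical subsolution $u_0$ of $\partial_t u_0+H(x,\partial_x u_0,t,\omega)=\alpha(\omega)$ provided by Theorem 4.1. Applying the Fenchel inequality $\partial_x u_0\cdot v\le L(x,v,t,\omega)+H(x,\partial_x u_0,t,\omega)$ and integrating $\tfrac{d}{d\sigma}u_0(\gamma(\sigma),\sigma)$ along any $\gamma\in\Sigma(t-\lambda,y;t,x)$ yields the domination inequality
\[
u_0(x,t)-u_0(y,t-\lambda)\le \int_{t-\lambda}^{t}L(\gamma(\sigma),\dot\gamma(\sigma),\sigma,\omega)\,d\sigma+\lambda\alpha(\omega).
\]
Minimizing over curves gives $A^{\omega}(t-\lambda,y;t,x)+\lambda\alpha(\omega)\ge u_0(x,t)-u_0(y,t-\lambda)$, hence
\[
T^{\omega}_{\lambda}(u)(x,t)\ge \min_{y\in M}u(y)+\min_{M\times T}u_0-\max_{M\times T}u_0,
\]
where I use that $u$ is continuous on the compact $M$ and that $u_0$ is continuous and $1$-periodic in $t$, so both are bounded. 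This bound is independent of $\lambda$ and of $(x,t)$.

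For the upper bound I would exhibit an explicit competitor realizing the critical average up to an $O(1)$ error. Using the minimizing measure $\mu$ with $\int L\,d\mu=\alpha(\omega)$, whose support is compact by the compact-support lemma of Section 4 (so velocities are bounded on $\mathrm{supp}(\mu)$), I would apply Poincar\'e recurrence to the invariant probability $\mu$ to fix a recurrent point of the Euler--Lagrange flow in $\mathrm{supp}(\mu)$. For given $(x,t)$ and large $\lambda$ I then splice three arcs on $[t-\lambda,t]$: a short terminal arc on $[t-1,t]$ joining the recurrent orbit to $x$; a long central arc on $[t-\lambda+1,t-1]$ following the recurrent minimizing orbit along a near-return time so that it nearly closes up, whose action $\int L\,d\sigma\approx -\alpha(\omega)(\lambda-2)$ since by Proposition 4.1 the minimal time-averaged action equals $-\alpha(\omega)$; and a short initial arc on $[t-\lambda,t-\lambda+1]$ from the chosen $y$ to the orbit. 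The two short arcs have bounded action because $M$ is compact and velocities along them are controlled, so after adding $\lambda\alpha(\omega)$ the linear terms cancel and $T^{\omega}_{\lambda}(u)(x,t)$ is bounded above by a constant independent of $\lambda$.

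The main obstacle is the upper bound, specifically producing the central arc with action controlled up to a $\lambda$-independent error: one must pass from the abstract minimizing measure to a genuine curve, which requires recurrence of the flow on $\mathrm{supp}(\mu)$ together with quantitative control of the two connecting arcs and of the defect incurred because the recurrent orbit need not close up exactly. The equi-semiconcavity and equi-Lipschitz estimate of Lemma 5.1, combined with the compactness of $\mathrm{supp}(\mu)$, are the inputs I would use to keep these connecting and defect costs bounded uniformly in $\lambda$; note that the Lipschitz-in-$\lambda$ estimate of Lemma 6.1 alone does not suffice, since its bound grows linearly with $\lambda$.
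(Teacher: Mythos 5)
Your lower bound is correct and self-contained: the Fenchel inequality applied to Massart's $C^1$ critical subsolution $u_0$ gives $A^{\omega}(t-\lambda,y;t,x)+\lambda\alpha(\omega)\ge u_0(x,t)-u_0(y,t-\lambda)\ge -2\|u_0\|_\infty$, uniformly in $\lambda$. The genuine gap is in your upper bound, at the central arc: neither Poincar\'e recurrence nor Proposition 4.1 justifies the claim that its action is $-\alpha(\omega)(\lambda-2)$ up to a $\lambda$-independent error. Poincar\'e recurrence controls return times, not Birkhoff sums, and Proposition 4.1 controls the infimum of normalized action over \emph{closed curves}, not the action along your particular orbit segment. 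The most you can extract (by choosing a Birkhoff-generic point of an ergodic minimizing component of $\mu$) is $\int_0^{T}L = -\alpha(\omega)T+o(T)$, so your splicing yields $T^{\omega}_{\lambda}(u)(x,t)\le \max_M u + o(\lambda)$: sublinear growth, which identifies the drift rate as zero, but not uniform boundedness. No control of the connecting arcs or of the closing defect --- which are indeed $O(1)$, as you argue --- can repair this, because the unbounded term sits inside the middle arc itself. (There is also a sign slip: you write $\int L\,d\mu=\alpha(\omega)$ but then use $-\alpha(\omega)$ for the minimal average, matching the convention of the paper's Step 2; the paper's introduction is itself inconsistent here.)

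The paper's proof is structured precisely to avoid needing any $O(1)$ construction. In Step 1 it sets $M_n(\omega)=\max_x T^{\omega}_n(0)(x,t)$, $m_n(\omega)=\min_x T^{\omega}_n(0)(x,t)$, shows $M_n$ is subadditive and $-m_n$ is subadditive, so $m_n\le -n\beta(\omega)\le M_n$ for the common rate $\beta(\omega)$, while equi-semiconcavity (Lemma 5.1) bounds the oscillation $M_n-m_n\le K(\omega)$ uniformly in $n$; together these pin $m_n$ and $M_n$ within $K(\omega)$ of $-n\beta(\omega)$, reducing uniform boundedness to the purely asymptotic statement $\beta(\omega)=0$. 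Step 2 proves exactly that, using Proposition 4.1's near-optimal closed curves plus a bounded-speed connecting arc --- and there an $\epsilon\lambda$-size error such as yours is harmless, since only the rate is being computed; Lemma 6.1 then interpolates from integer $n$ to all real $\lambda>0$. Alternatively, your own architecture can be repaired without the subadditive machinery: by the equality case established in the proof of Lemma 4.1, along any orbit in $\mathrm{supp}(\mu)$ one has $\partial_x u_0\cdot\dot\gamma+\partial_t u_0 = L+\alpha(\omega)$, i.e.\ orbits in the support are \emph{calibrated} by $u_0$, so the central arc's action plus $T\alpha(\omega)$ equals an increment of $u_0$ and is bounded by $2\|u_0\|_\infty$; with that observation recurrence becomes unnecessary (any support orbit serves as the central arc) and your three-arc splice does deliver a genuine $\lambda$-independent upper bound.
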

\begin{proof}
	Step 1.	Fix $\omega \in \Omega$, $ t\in R$. Define the sequences
	$M_n(\omega)=\max_{x \in M}T^{\omega}_{n}(0)(x,t)$ and $m_n(\omega)=\min_{x \in M}T^{\omega}_{n}(0)(x,t)$ where $0$ is the zero function on $M$. From Lemma 5.1, the function $T^{\omega}_{n}(0)$,$n\ge 1$, are equi-semi-concave, there exists a constant $K(\omega)$ such that
	\begin{equation}{\nonumber}
	0 \le M_n(\omega) - m_n(\omega) \le K(\omega).
	\end{equation}
	for $n \ge 1$. We claim that $M_{n+m}(\omega) \le M_{n}(\omega) + M_{m}(\omega)$. This follows from the inequalities
	\begin{equation}{\nonumber}
	T^{\omega}_{n+m}(0)(x,t)=T^{\omega}_{m}(T^{\omega}_{n}(0)(x,t) \le T^{\omega}_{m}(M_n(\omega))(x,t)\le M_n(\omega) + T^{\omega}_{m}(0)(x,t)
	\end{equation}
	Hence by a classical result on subadditive sequences, we have $\lim \frac{M_n(\omega)}{n}=\inf\frac{ M_n(\omega)}{n}$. We denote by $-\beta(\omega)$ this limit. In the same way, the sequence $-m_{n}(\omega)$ is subadditive, hence $\frac{m_n(\omega)}{n} \to \sup \frac{m_n(\omega)}{n}$.This limit is also $-\beta(\omega)$ since $0\le M_n(\omega)-m_n(\omega) \le K$.Note that $m_1(\omega) \le -\beta(\omega) \le M_1(\omega)$, so that $\beta(\omega)$ is indeed a finite number. We have, for all $n \le 1$, 
	\begin{equation}{\nonumber}
	-K(\omega)-n\beta(\omega) \le m_n(\omega) \le -n\beta(\omega) \le M_n(\omega) \le K(\omega)-n\beta(\omega)
	\end{equation}
	Now for all $u \in C(M,R)$,$n\in N$ and $x \in M$, we have 
	\[
	\min_{M}u-K(\omega) \le \min_{M}u + m_n(\omega) + n\beta(\omega) \le T^{\omega}_{n}u(x,t) + n\beta(\omega) \le 
	\max_{M}u + M_{n}(\omega) +n \beta(\omega) \le \max_{M}u+K(\omega)
	\]
	
	Hence, for all $u \in C(M,R)$,$n \in N$,$x \in M$, we have
	\begin{equation}{\nonumber}
	\frac{\min_{M}u-K(\omega)}{n} \le \frac{T^{\omega}_{n}u(x,t)}{n} +\beta(\omega) \le \frac{\max_{M}u+K(\omega)}{n}
	\end{equation}
	Step 2.On one hand, from Proposition 4.1 , we can find a sequence of measure $\frac{1}{n_k}[\gamma_{n_k}]$ where $n_k$ is a sequence of increasing intergers towards $+\infty$, $\gamma_{n_k}$ is a closed absolutely continuous curve from $[t-n_k,t]$ to $M$, such that $\forall \epsilon>0$, we can find an interger $N$, such that, when $k \ge N$, we have
	\begin{equation}{\nonumber}
	-\alpha(\omega)\le \frac{1}{n_k}\int_{t-n_k}^{t}L(\gamma_{n_k}(s),\dot{\gamma}_{n_k}(s),s,\omega)ds \le -\alpha(\omega) + \epsilon
	\end{equation}
	Hence, we have
	\begin{equation}{\nonumber}
 T^{\omega}_{n_k}(0)(\gamma_{n_k}(t),t) \le \int_{-n_k+t}^{t}L(\gamma_{n_k}(s),\dot{\gamma}_{n_k}(s),s,\omega)dt+n_k\alpha(\omega) \le n_k \epsilon
	\end{equation}
	Therefore, we have
	\begin{equation}{\nonumber}
	 \frac{1}{n_k}T^{\omega}_{n_k}(0)(\gamma_{n_k}(t),t) \le \epsilon
	\end{equation}
On the other hand, there is	$x \in M$ such that 
\[
T^{\omega}_{n_k}(0)(\gamma_{n_k}(t),t)=A^{\omega}(x,t-n_k;\gamma_{n_k}(t),t)+n_k\alpha(\omega)
\]
We assume that $\gamma_1 \in \Sigma^{\omega}_{m}(x,t-n_k;\gamma_{n_{k}}(t),t)$.
there is an absolutely continuous path $\gamma:[t-n_k-1,t-n_k] \to M$ such that $|\dot{\gamma}(s)|\le dist(M)$, and $\gamma(t-n_k)=x$, $\gamma(t-n_k-1)=\gamma_{n_k}(t)$. Then we know that 
\[
\int^{t-n_k}_{t-n_k-1}|L(\gamma(s),\dot{\gamma}(s),s,\omega)|ds \le C(\omega)
\]
Construct a new closed curve $\tilde{\gamma}:[t-n_k-1,t] \to M$ in the following way: when $s \in [t-n_k,t]$, $\tilde{\gamma}(s)=\gamma_1(s)$; when $s \in [t-n_k-1,t-n_k]$, $\tilde{\gamma}(s)=\gamma(s)$. From proposition 4.1, we know that 
\[
\int^{t}_{t-n_k-1}L(\tilde{\gamma}(s),\dot{\tilde{\gamma}}(s),s,\omega)ds \ge -(n_k+1)\alpha(\omega)
\]
Hence, we have
\begin{equation}\nonumber
\begin{aligned}
&T^{\omega}_{t-n_k}(0)(\gamma_{n_k}(t),t)=n\alpha(\omega)+ \int^{t}_{t-n_k-1}L(\tilde{\gamma}(s),\dot{\tilde{\gamma}}(s),s,\omega)-\int^{t-n_k}_{t-n_k-1}L(\gamma(s),\dot{\gamma}(s),s,\omega)ds \\
&\ge -\alpha(\omega)-C(\omega)\\
\end{aligned}
\end{equation}
So we have 
\[
-\frac{\alpha(\omega)+C(\omega)}{n_k} \le \frac{T^{\omega}_{t-n_k}(0)(\gamma_{n_k}(t),t)}{n_k} \le \epsilon
\]	
	So combine the result with step 2, let $k$ tends to $\infty$, we know that $\beta(\omega)=0$. Hence,
	\begin{equation}{\nonumber}
	\min_{M}u-K(\omega) \le T^{\omega}_{n}u(x,t)\le \max_{M}u+K(\omega)
	\end{equation}
	
	Use Lemma 6.1, $T^{\omega}_{\lambda}u(x,t)$ is lipschitz for $\lambda$, with Lipschitz constant $|\alpha(\omega)|+|f(\omega)|$, so we have 
	\[
	\min_{M}u-K(\omega)-|\alpha(\omega)|-|f(\omega)| \le T^{\omega}_{\lambda}u(x,t) \le \max_{M}u + K(\omega) + |\alpha(\omega)| + |f(\omega)|
	\]

\end{proof}

\begin{lemma}If the variables $t,s \in R$, $x,y \in M$, $n \in \mathbb{N}$, $u \in C_{0}(M,R)$ are fixed, $A^{\omega}(s,y;t,x)$, $T^{\omega}_{\lambda}(u)(x,t)$, $T^{\omega}_{\lambda,+}(u)(x,t)$ and $u^{\omega}(x,t)$, $u^{\omega}_{+}(x,t)$ are random variables over the probability space $\Omega$.
\end{lemma}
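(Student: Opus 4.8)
The plan is to prove measurability from the inside out: first for the action $A^{\omega}(s,y;t,x)$ as a function of $\omega$, then bootstrap to the two Lax--Oleinik operators, and finally to their $\liminf$. Throughout, the mechanism is the same. Each quantity is an infimum (or a $\liminf$) of a family of functions of $\omega$ that are individually measurable, and separability together with continuity will let me replace the uncountable index set by a countable one, so that measurability is preserved by countable infima, suprema and limits. The work is concentrated in the first step; everything afterwards is a packaging argument.

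\textbf{Step 1 (the action).} For a fixed curve $\gamma\in\Sigma(s,y;t,x)$, Fubini's theorem together with the joint measurability of $L$ and the measurability of $L(x,v,\sigma,\cdot)$ shows that $\omega\mapsto\int_s^t L(\gamma(\sigma),\dot\gamma(\sigma),\sigma,\omega)\,d\sigma$ is measurable. The difficulty, and the main obstacle, is that $A^{\omega}(s,y;t,x)$ is an infimum over an uncountable curve space and the minimizer depends on $\omega$. To handle this I would reuse the separability argument from the second proof of Theorem 1.1: via the Whitney embeddings $M\hookrightarrow\mathbb{R}^m$ and $TM\hookrightarrow\mathbb{R}^{2m}$ the space $C^{0}([s,t],TM)$ is separable, and inside it the $C^{1}$ curves joining $(s,y)$ to $(t,x)$ admit a countable $C^{1}$-dense subset $\mathcal{D}$. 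Each member of $\mathcal{D}$ is an admissible competitor, so $\inf_{\gamma\in\mathcal{D}}\int_s^t L(\gamma,\dot\gamma,\sigma,\omega)\,d\sigma\ge A^{\omega}(s,y;t,x)$. For the reverse inequality I fix $\omega$: by Tonelli's theorem the minimizer is $C^{2}$ with bounded velocity, hence $C^{1}$-approximable by curves in $\mathcal{D}$, and since $L(\cdot,\cdot,\cdot,\omega)$ is continuous the corresponding actions converge to $A^{\omega}(s,y;t,x)$. Thus $A^{\omega}(s,y;t,x)=\inf_{\gamma\in\mathcal{D}}\int_s^t L(\gamma,\dot\gamma,\sigma,\omega)\,d\sigma$ is a countable infimum of measurable functions, hence measurable.

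\textbf{Step 2 (the operators).} Writing
\[
T^{\omega}_{\lambda}(u)(x,t)=\min_{y\in M}\bigl\{u(y)+A^{\omega}(t-\lambda,y;t,x)+\lambda\alpha(\omega)\bigr\},
\]
each summand is measurable in $\omega$ for fixed $y$: $u(y)$ is constant, $A^{\omega}(t-\lambda,y;t,x)$ is measurable by Step 1, and $\alpha(\omega)$ is measurable by Theorem 1.1. Since $u$ is continuous and $y\mapsto A^{\omega}(t-\lambda,y;t,x)$ is continuous (indeed Lipschitz, by the equi-semiconcavity of Lemma 5.1), the minimum over the compact $M$ coincides with the infimum over any countable dense set $\{y_k\}\subset M$, giving a countable infimum of measurable functions. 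Hence $T^{\omega}_{\lambda}(u)(x,t)$ is measurable, and the identical argument applies verbatim to $T^{\omega}_{\lambda,+}(u)(x,t)$.

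\textbf{Step 3 (the limits).} Finally $u^{\omega}(x,t)=\liminf_{\lambda\to+\infty}T^{\omega}_{\lambda}(u)(x,t)=\sup_{n}\inf_{\lambda\ge n}T^{\omega}_{\lambda}(u)(x,t)$. By Lemma 6.1 the map $\lambda\mapsto T^{\omega}_{\lambda}(u)(x,t)$ is Lipschitz, hence continuous, so for each $n$ the infimum over $\lambda\ge n$ equals the infimum over the rationals $\lambda\in\mathbb{Q}\cap[n,\infty)$. This exhibits $u^{\omega}(x,t)$ as a countable supremum of countable infima of the measurable functions from Step 2, and therefore as a measurable function on $\Omega$. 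The forward case $u^{\omega}_{+}(x,t)$ is identical once one records the analogue of Lemma 6.1 for $T^{\omega}_{\lambda,+}$, which follows from the same comparison-of-endpoints estimate.
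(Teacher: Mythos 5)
Your proposal is correct and follows essentially the same route as the paper's proof: Fubini for the measurability of the action along a fixed curve, Whitney embedding plus Stone--Weierstrass to get separability of the curve space so the infimum reduces to a countable one, a countable dense subset of the compact $M$ for the Lax--Oleinik minimum, and Lipschitz continuity in $\lambda$ (Lemma 6.1) to reduce the $\liminf$ to a countable family. Your Step 1 is in fact slightly more careful than the paper's (you insist the countable dense family consist of admissible competitors with the prescribed endpoints and argue both inequalities explicitly, where the paper takes a dense subset of extremals without comment), and your reduction over $\mathbb{Q}\cap[n,\infty)$ replaces the paper's sequence $\lambda_n=\sum_{k=1}^{n}\frac{1}{k}$, but these are cosmetic variants of the same argument.
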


\begin{proof}
	The extreme curves of the Action are $C^2$ since they satisfy the lagrangian equations.
	
	Let $C^{0}([s,t],TM)$ denote the space of continuous function from $[s,t]$ to $TM$ with uniform norm in  $TM$. $M$ is a manifold, by whitney' embedding theorem, there exists $m \in N$, such that $M$ can be embedded isomorphically into $R^m$. Therefore, $TM$ can be embedded as a submanifold of $R^{2m}$. By Stone Weierstrass theorem, the space of continuous functions from $[s,t]$ into $R$ is separable.  Therefore, the space of continuous functions from $[s,t]$ to $R^{2m}$ is separable, and as its subset, $C^{0}([s,t],TM)$ is also separable. Since the extreme curves of the Action is a subset of $C^{0}([s,t],TM)$,  So we can pick up a countable dense subset $\{\gamma_i\}_{i \in N}$,such that
	\begin{equation}\nonumber
	A^{\omega}(s,y;t,x)=\min_{i \in N}\int^{t}_{s}L(\gamma_i(\sigma),\dot{\gamma_i}(\sigma),\sigma,\omega)d\sigma
	\end{equation}
	By Fubini Thoerem, $\int^{t}_{s}L(\gamma_i(\sigma),\dot{\gamma_i}(\sigma),\sigma,\omega)d\sigma$ is measurable function on $\Omega$. As a minimun of countable measurable functions, $A^{\omega}(s,y;t,x)$ is measurable.
	
	$M$ is a separable metric space, $u(y)$, $A^{\omega}(t-\lambda,y;t,x)$ are continuous functions with $y$, so we can find a countable dense set $\{y_i\}_{i\in N}$ in $M$ such that 
	\begin{equation}\nonumber
	T^{\omega}_{\lambda}(u)(x,t)=\min_{i \in N}\{u(y_i)+ A^{\omega}(t-\lambda,y_i;t,x)+\lambda\alpha(\omega)\}
	\end{equation}
	As a minimum of countable measurable functions, $T^{\omega}_{\lambda}(u)(x,t)$ is a random variable.
	
	Since $T^{\omega}_{\lambda}(u)(x,t)$ is uniformly continuous with $\lambda$, so we can a pick a sequnce $\lambda_n=\sum_{k=1}^{n}\frac{1}{k} \to \infty$.
	Then, we have
	\begin{equation}\nonumber
	u^{\omega}(x,t)=\liminf_{n \to \infty}T^{\omega}_{\lambda_n}(u)(x,t)
	\end{equation}
	So as the infimum limit of a countable sequence of measurable function, $u^{\omega}(x,t)$ is measurable over $\Omega$.
	
	The measurability of $T^{\omega}_{\lambda,+}(u)(x,t)$,$u^{\omega}_{+}(x,t)$ can be proved in a similar way.
\end{proof}

\begin{lemma}
	Fix $u\in C_{0}(M,R)$, for $\lambda>0 ,t,s \in R$, $x \in M $, we have the following formula:
	1.\begin{equation}\nonumber
	T^{\theta(s)\omega}_{\lambda}(u)(x,t)=T^{\omega}_{\lambda}(u)(x,t+s)
	\end{equation}
	2.\begin{equation}\nonumber
	u^{\omega}(x,t)=T^{\omega}_{t-s}(u^{\omega}(\;,s))(x,t)=\min_{y\in M}\{u^{\omega}(y,s)+A^{\omega}(s,y;t,x)+(t-s)\alpha(\omega)\}
	\end{equation}
	3.\begin{equation}\nonumber
	u^{\theta(s)\omega}(x,t)=u^{\omega}(x,t+s)
	\end{equation}
	4.\begin{equation}\nonumber
	u^{\omega}(x,t)=u^{\omega}(x,t+1)
	\end{equation}
	These formula have similar versions for $T^{\omega}_{\lambda,+}(u)(x,t)$,$u^{\omega}_{+}(x,t)$.
\end{lemma}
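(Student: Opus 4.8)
The plan is to reduce all four identities to two elementary transformation rules for the action functional, and then to treat the fixed-point identity (2) separately, since it is the only one requiring a compactness argument. The first rule is the behaviour of the action under the base flow: for $a<b$, $x,y\in M$ and $s\in\mathbb{R}$ I claim
\[
A^{\theta(s)\omega}(a,x;b,y)=A^{\omega}(a+s,x;b+s,y).
\]
To prove this I would start from the definition of $A^{\theta(s)\omega}$, insert Assumption 2.1 in the form $L(q,w,\sigma,\theta(s)\omega)=L(q,w,\sigma+s,\omega)$ inside the integral, and change variables $\tau=\sigma+s$ together with the reparametrised curve $\tilde{\gamma}(\tau)=\gamma(\tau-s)$; absolute continuity and the a.e.\ derivative are preserved, so the infima over admissible curves coincide. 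The second rule is pure time-periodicity: since $L(\cdot,\cdot,\cdot,\omega)$ is $1$-periodic in $t$, the same change of variables with $s=1$ but \emph{no} change of $\omega$ gives $A^{\omega}(a+1,x;b+1,y)=A^{\omega}(a,x;b,y)$.

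Given these rules, identities (1), (3), (4) follow quickly. For (1) I would expand $T^{\theta(s)\omega}_{\lambda}(u)(x,t)$ by definition, apply the first rule to the action term and $\alpha(\theta(s)\omega)=\alpha(\omega)$ (from Theorem 1.1) to the constant term, and read off $T^{\omega}_{\lambda}(u)(x,t+s)$. Taking $\liminf_{\lambda\to+\infty}$ on both sides of (1) then yields (3) at once. For (4) I would instead use the periodicity rule to get $T^{\omega}_{\lambda}(u)(x,t+1)=T^{\omega}_{\lambda}(u)(x,t)$ for every $\lambda$, and again pass to the $\liminf$; this is cleaner than deducing (4) from (3), which would require comparing $u^{\omega}$ with $u^{\theta(1)\omega}$.

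The substantive statement is (2), the invariance of $u^{\omega}$ under the Lax--Oleinik semigroup. Here I would first record the cocycle (dynamic programming) identity $A^{\omega}(s,y;t,x)=\min_{z\in M}\{A^{\omega}(s,y;r,z)+A^{\omega}(r,z;t,x)\}$ for $s<r<t$, which comes from concatenation of curves together with Tonelli's existence of minimizers. Splitting the action in the definition of $T^{\omega}_{\lambda}(u)(x,t)$ at the intermediate time $s$ (legitimate once $\lambda>t-s$, hence for all large $\lambda$) and grouping the $y$-minimisation gives the semigroup relation
\[
T^{\omega}_{\lambda}(u)(x,t)=\min_{z\in M}\bigl\{\,v_{\lambda-(t-s)}(z)+A^{\omega}(s,z;t,x)+(t-s)\alpha(\omega)\,\bigr\},\qquad v_{\mu}:=T^{\omega}_{\mu}(u)(\cdot,s),
\]
so that (2) becomes the assertion that $\liminf_{\lambda}$ commutes with this $\min_{z}$.

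The hard part will be exactly this interchange. The upper bound $u^{\omega}(x,t)\le\min_{z}\{u^{\omega}(z,s)+A^{\omega}(s,z;t,x)+(t-s)\alpha(\omega)\}$ is easy: testing the $\min_{z}$ against a fixed $z_{0}$ and taking $\liminf$ in $\lambda$ gives the inequality for each $z_{0}$, and then I minimise over $z_{0}$. For the reverse inequality I would pick $\lambda_{n}\to\infty$ realising the $\liminf$ defining $u^{\omega}(x,t)$, let $z_{n}$ attain the corresponding $\min_{z}$, and extract by compactness of $M$ a subsequence $z_{n}\to z_{*}$. The key technical input is that $\{v_{\mu}\}_{\mu\ge 1}$ is equi-Lipschitz: a minimum of uniformly Lipschitz functions is Lipschitz, and the required uniform Lipschitz bound on $z\mapsto A^{\omega}(s-\mu,y;s,z)$ is furnished by the equi-semiconcavity of Lemma 5.1, while the uniform bounds of Lemma 6.2 keep everything finite. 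Equi-Lipschitz continuity lets me replace $v_{\mu_{n}}(z_{n})$ by $v_{\mu_{n}}(z_{*})$ up to an error $O(d(z_{n},z_{*}))\to 0$, and then $\liminf_{n}v_{\mu_{n}}(z_{*})\ge\liminf_{\mu\to\infty}v_{\mu}(z_{*})=u^{\omega}(z_{*},s)$ since a subsequential $\liminf$ dominates the full one. Combining this with the continuity of $A^{\omega}(s,\cdot;t,x)$ (again Lemma 5.1) gives $u^{\omega}(x,t)\ge u^{\omega}(z_{*},s)+A^{\omega}(s,z_{*};t,x)+(t-s)\alpha(\omega)$, which is bounded below by the desired minimum. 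The ``$+$'' versions of all four identities are handled identically, with the roles of the initial and terminal times exchanged.
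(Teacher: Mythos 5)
Your proposal is correct and follows essentially the same route as the paper's proof: formulas 1, 3, 4 come from the transformation/periodicity properties of $L$ (with $\alpha(\theta(s)\omega)=\alpha(\omega)$), and formula 2 is proved by two inequalities via the semigroup relation, with the reverse inequality obtained by extracting a convergent subsequence of minimizing points $z_n\to z_*$ in the compact manifold $M$ and using $\liminf$ along subsequences. If anything, you are more careful than the paper: the equi-Lipschitz bound on $z\mapsto T^{\omega}_{\mu}(u)(z,s)$ that justifies replacing $v_{\mu_n}(z_n)$ by $v_{\mu_n}(z_*)$ is used only implicitly in the paper's step passing from $q_k$ to $q$, whereas you supply it explicitly from the equi-semiconcavity of Lemma 5.1.
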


\begin{proof}
	First of all, the formula 1 can be derived directly from $L(x,v,t,\theta(s)\omega)=L(x,v,t+s,\omega)$.
	Secondly,by definition, $\forall \epsilon>0$,there exists $y\in M$,such that 
	\begin{equation}\nonumber
	T^{\omega}_{t-s}u^{\omega}( ,s)(x,t)+\epsilon=u^{\omega}(y,s)+ A^{\omega}(s,y;t,x)+(t-s)\alpha(\omega)
	\end{equation}
	By defnition, there exists a sequence $\lambda_i \to +\infty$ as $i \to +\infty$, such that 
	\begin{align*}
		u^{\omega}(y,s)=\liminf_{i \to +\infty}T^{\omega}_{\lambda_i}(u)(y,s)
	\end{align*}
	Therefore, we have 
	\begin{equation}\nonumber
	\begin{aligned}
	&T^{\omega}_{t-s}u^{\omega}( ,s)(x,t)+\epsilon
	=\liminf_{i \to +\infty}T^{\omega}_{\lambda_i}(u)(y,s)+A^{\omega}(s,y;t,x)+(t-s)\alpha(\omega) \\
	&\ge \liminf_{i\to +\infty}T^{\omega}_{\lambda_i+t-s}(u)(x,t) \ge \liminf_{\lambda \to +\infty}T^{\omega}_{\lambda}(u)(x,t)=u^{\omega}(x,t)
	\end{aligned}
	\end{equation}
	Since $\epsilon>0$ is arbitrary, we can get that
	\begin{equation}\nonumber
	T^{\omega}_{t-s}u^{\omega}( ,s)(x,t)\ge u^{\omega}(x,t)
	\end{equation}
	Conversely, there exists $\lambda_k \to +\infty$ as $k \to +\infty$, such that 
	\begin{equation}\nonumber
	u^{\omega}(x,t)=\lim_{k \to +\infty}T^{\omega}_{\lambda_k}(u)(x,t)=\lim_{k \to +\infty}T^{\omega}_{t-s}(T^{\omega}_{\lambda_k-t+s}(u))(x,t)
	\end{equation}
	By definition, $\forall \epsilon > 0$, there exists $\{q_k\}_{k\in N} \in M$,such that
	\begin{equation}\nonumber
	T^{\omega}_{\lambda_k}(u)(x,t)\ge T^{\omega}_{\lambda_k-t+s}(u)(q_k,s)+A^{\omega}(q_k,s;x,t)+(t-s) \alpha(\omega)-\epsilon
	\end{equation}
	Since $M$ is a compact manifold, without loss of generality, we can assume that $q_k \to q$ as $k \to \infty$ for some $q\in M$. so 
	\begin{equation}\nonumber
	\begin{aligned}
		&u^{\omega}(x,t)\ge \lim_{k\to \infty}T^{\omega}_{\lambda_k-t+s}(u)(q_k,s)+A^{\omega}(q_k,s;x,t)+(t-s) \alpha(\omega)-\epsilon  \\
		&\ge \liminf_{k \to \infty}T^{\omega}_{\lambda_k-t+s}(u)(q,s)+ A^{\omega}(q,s;x,t)+ (t-s)\alpha(\omega)-\epsilon\\
		&\ge u^{\omega}(q,s)+A^{\omega}(q,s;x,t)+(t-s)\alpha(\omega)-\epsilon\\
		&\ge T^{\omega}_{t-s}(u( ,s))(x,t)-\epsilon
	\end{aligned}
	\end{equation}
	Since $\epsilon>0$ is arbitrary, we have that
	\begin{equation}\nonumber
	u^{\omega}(x,t) \ge T^{\omega}_{t-s}u^{\omega}( ,s)(x,t)
	\end{equation}
	Consequently, the formula 2 holds.
	
	Thirdly, the formula 3 arises directly from formula 1.
	
	Finally, we can get the formula 4 from $L(x,v,t+1,\omega)=L(x,v,t,\omega)$.
\end{proof}

\begin{theorem}
	Fix $u \in C_{0}(M,R)$, $u^{\omega}(x,t)$ is a viscosity solution of the Hamilton-Jacobi Equation
	\begin{equation}\nonumber
	\partial_t u(x,t) + H(x,\partial_x u(x,t),t,\omega)=\alpha(\omega) 
	\end{equation}
\end{theorem}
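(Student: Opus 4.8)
The plan is to take the dynamic programming principle of formula 2 in Lemma 6.4,
\[
u^{\omega}(x,t)=\min_{y\in M}\{u^{\omega}(y,s)+A^{\omega}(s,y;t,x)+(t-s)\alpha(\omega)\},\qquad s<t,
\]
as the single structural input, and to verify the subsolution and supersolution inequalities separately. Before that I would record that $u^{\omega}$ is continuous: by Lemma 5.1 the functions $T^{\omega}_{\lambda}(u)(\cdot,t)$ are equi-Lipschitz in $x$ on the compact manifold $M$ (uniformly for $\lambda$ bounded below), and a $\liminf$ of a family of $C$-Lipschitz functions is again $C$-Lipschitz, so $u^{\omega}(\cdot,t)$ is Lipschitz in $x$; joint continuity in $(x,t)$ then follows from the semigroup identity together with Lemma 6.1 and the boundedness of $L$ on bounded-velocity sets. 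Since the viscosity notion is local, I would fix $(x_0,t_0)$ and work in a coordinate chart around $x_0$.

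\textbf{Subsolution.} Let $\phi\in C^{1}$ with $u^{\omega}-\phi$ having a local maximum at $(x_0,t_0)$, normalized so that $\phi\ge u^{\omega}$ near $(x_0,t_0)$ and $\phi(x_0,t_0)=u^{\omega}(x_0,t_0)$. I would fix $v\in T_{x_0}M$, take $\gamma(\sigma)=x_0+(\sigma-t_0)v$ in the chart, and use $y=\gamma(t_0-h)$ as a (non-optimal) competitor in the minimum above with $s=t_0-h$:
\[
u^{\omega}(x_0,t_0)\le u^{\omega}(\gamma(t_0-h),t_0-h)+\int_{t_0-h}^{t_0}L(\gamma,\dot\gamma,\sigma,\omega)\,d\sigma+h\,\alpha(\omega).
\]
Replacing $u^{\omega}$ by $\phi$ on the left (equality) and on the right (upper bound), rewriting the difference of $\phi$ values as $\int_{t_0-h}^{t_0}\frac{d}{d\sigma}\phi(\gamma(\sigma),\sigma)\,d\sigma$, dividing by $h$ and letting $h\to 0^{+}$ would give $\partial_x\phi\cdot v+\partial_t\phi\le L(x_0,v,t_0,\omega)+\alpha(\omega)$ at $(x_0,t_0)$; taking the supremum over $v$ and using the definition of $H$ yields $\partial_t\phi+H(x_0,\partial_x\phi,t_0,\omega)\le\alpha(\omega)$.

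\textbf{Supersolution.} Let $\phi\in C^{1}$ with $u^{\omega}-\phi$ having a local minimum at $(x_0,t_0)$, normalized so $\phi\le u^{\omega}$ near $(x_0,t_0)$ and $\phi(x_0,t_0)=u^{\omega}(x_0,t_0)$. For small $h>0$ I would choose a minimizer $\gamma_h$ realizing the DPP with $s=t_0-h$,
\[
u^{\omega}(x_0,t_0)=u^{\omega}(\gamma_h(t_0-h),t_0-h)+\int_{t_0-h}^{t_0}L(\gamma_h,\dot\gamma_h,\sigma,\omega)\,d\sigma+h\,\alpha(\omega).
\]
Using $\phi\le u^{\omega}$ at $\gamma_h(t_0-h)$ and the same integration by parts gives
\[
\int_{t_0-h}^{t_0}\bigl[\partial_x\phi\cdot\dot\gamma_h-L(\gamma_h,\dot\gamma_h,\sigma,\omega)+\partial_t\phi\bigr]\,d\sigma\ge h\,\alpha(\omega),
\]
and the Fenchel inequality $\partial_x\phi\cdot\dot\gamma_h-L\le H(\gamma_h,\partial_x\phi,\sigma,\omega)$ turns the left side into a bound in terms of $H+\partial_t\phi$ along $\gamma_h$. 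Dividing by $h$ and letting $h\to 0^{+}$ would give $\partial_t\phi+H(x_0,\partial_x\phi,t_0,\omega)\ge\alpha(\omega)$, which together with the subsolution estimate proves the theorem.

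The hard part will be the passage $h\to 0^{+}$ in the supersolution step, since it needs uniform control of the minimizers: one must show $\gamma_h(t_0-h)\to x_0$ and that the velocities $\dot\gamma_h$ stay bounded as $h\to 0$, so that continuity of $H$, $\partial_x\phi$ and $\partial_t\phi$ lets the averaged integrand converge to its value at $(x_0,t_0)$. I would extract this a priori estimate from the competition between superlinearity (traveling a distance $d$ in time $h$ costs at least a superlinear function of $d/h$) and the Lipschitz bound on $u^{\omega}$ (which caps the possible gain by $\mathrm{Lip}\cdot d$); comparing with the $O(h)$ upper bound supplied by the constant competitor $y=x_0$ forces $d/h$ to remain bounded, yielding both the convergence and the velocity bound.
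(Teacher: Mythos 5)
Your proof is correct, and the subsolution half is essentially the paper's own argument (straight-line competitor $\gamma(\sigma)=x_0+(\sigma-t_0)v$ in a chart, the dynamic programming inequality from Lemma 6.4, divide by $h$, take the supremum over $v$), differing only in that you phrase it with $C^1$ test functions while the paper works directly with the superdifferential $D^{+}u^{\omega}(x_0,t_0)$ --- an equivalent formulation. The supersolution half is where you genuinely diverge. The paper invokes Lemma 6.4 (formula 2) with the \emph{fixed} time $s=t_0-1$ to produce a single backward calibrated curve $\gamma$ on $[t_0-1,t_0]$ ending at $x_0$; since calibration restricts to every subinterval $[t_0-h,t_0]$ and the minimizer is $C^2$ (Euler--Lagrange), the limit $h\to 0^{+}$ is immediate with $w=\dot{\gamma}(t_0)$, and no uniform control of an $h$-dependent family is ever needed: one gets $p_x\cdot w - L(x_0,w,t_0,\omega)\ge \alpha(\omega)-p_t$ directly, hence $p_t+H\ge\alpha(\omega)$. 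You instead take an $h$-dependent minimizer $\gamma_h$ for each short interval, apply the Fenchel inequality, and then must prove the a priori estimate $d(\gamma_h(t_0-h),x_0)=O(h)$; your sketch of that estimate (superlinearity $L\ge R|v|-C_R$ with $R$ exceeding the Lipschitz constant of $u^{\omega}$, played against the $O(h)$ bound from the constant competitor $y=x_0$) is sound, and in fact after Fenchel your integrand depends only on the position $\gamma_h(\sigma)$, so the resulting $L^1$ velocity bound already gives $\sup_{\sigma}d(\gamma_h(\sigma),x_0)=O(h)$ and pointwise velocity bounds are not even required. The trade-off: the paper's route buys a trivial limit passage at the cost of first establishing the backward calibrated curve of fixed length (via the fixed-point identity $u^{\omega}=T^{\omega}_{t-s}u^{\omega}(\cdot,s)$), while yours uses only the DPP plus Tonelli existence but must carry the short-time compactness estimate by hand. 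Both are complete; your explicit remark on continuity and equi-Lipschitzness of $u^{\omega}$ (inherited from Lemma 5.1 through the $\liminf$) is a detail the paper uses implicitly without stating.
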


\begin{proof}
	Fix $\omega \in \Omega$, for any $(x_0,t_0) \in M$. We can find a local chart $(U,\phi)$, $U \in M$,$\phi:U \to \phi(U) \in R^n$ is a diffeomorphism, $(x_0,t_0) \in U$. Without loss of generality, we assume that $U \in R^n$.
	
	Step 1.To show $u^{\omega}(x,t)$ is a subsolution of the Hamilton-Jacobi equation, we need to prove that if $(p_x,p_t) \in D^{+}u^{\omega}(x_0,t_0)$, 
	$p_t+H(x,p_x,t,\omega) \le \alpha(\omega)$.
	
	$\forall v \in T_{x_0}M$, since $(p_x,p_t) \in D^{+}u^{\omega}(x_0,t_0)$, we have
	\begin{equation}{\nonumber}
	\limsup_{h \to 0+} \frac{u^{\omega}(x_0-hv,t_0-h)-u^{\omega}(x_0,t_0)+h(p_t+p_x \cdot v)}{h\sqrt{1+{|v|}^2}}\le 0
	\end{equation}
	Which is equivalent to 
	\begin{equation}{\nonumber}
	\limsup_{h \to 0+} \frac{u^{\omega}(x_0-hv,t_0-h)-u^{\omega}(x_0,t_0)}{h}\le -p_t - p_x \cdot v
	\end{equation}
	Since $U$ is open, there exists $\sigma>0$, such that$\{\gamma(t)=x-s\cdot v| 0 \le s \le \sigma \}\in U$, from lemma 4, we know that when $0<h\le\sigma$, we have
	\begin{equation}{\nonumber}
	u^{\omega}(x_0,t_0) \le u^{\omega}(x_0-h\cdot v,t_0-h)+\int^{t_0}_{t_0-h}L(\gamma(s),\dot{\gamma}(s),s,,\omega)ds + h\cdot \alpha(\omega)
	\end{equation}
	Then,
	\begin{equation}{\nonumber}
	-\alpha(\omega)-\liminf_{h \to 0+}\frac{1}{h}\int^{t_0}_{t_0-h}L(\gamma(s),\dot{\gamma}(s),s,\omega)ds \le \limsup_{h \to 0+}\frac{u(x_0-hv,t_0-h)-u(x_0,t_0)}{h} \le  -p_t - p_x \cdot v
	\end{equation}
	Hence, we have
	\begin{equation}{\nonumber}
	-\alpha(\omega)-L(x_0,v,t_0,\omega) + p_x \cdot v+p_t\le 0
	\end{equation}
	Therefore, 
	\begin{equation}{\nonumber}
	p_t+H(x_0,v,t_0,\omega)-\alpha(\omega) =p_t+ \sup_{v \in TM}\{p_x \cdot v -L(x_0,v,t_0,\omega)\}-\alpha(\omega) \le 0
	\end{equation}
	
	Step 2.To show $u^{\omega}(x,t)$ is a subsolution of the Hamilton-Jacobi equation, we need to prove that if $(p_x,p_t) \in D^{+}u^{\omega}(x_0,t_0)$, $p_t + H(x,p_x,t,\omega) \ge \alpha(\omega)$.

	From Lemma 6.4, we know that there exists $y \in M$, an absolute curve $\{\gamma(t)| t_0-1 \le t \le t_0,\gamma(t_0-1)=y,\gamma(t_0)=x_0\}$,such that 
	\begin{equation}{\nonumber}
	u^{\omega}(x_0,t_0)=u^{\omega}(y,t_0-1)+\int^{t_0}_{t_0-1}L(\gamma(t),\dot{\gamma}(t),t,\omega)dt+(t_0-t)\alpha(\omega)
	\end{equation}
	Since $U$ is open, there exists $\sigma>0$, such that $\{\gamma(t)|t_0-\sigma \le t \le t_0 \}\in U$.
	Then we have 
	\begin{equation}{\nonumber}
	u^{\omega}(x_0,t_0)=u^{\omega}(\gamma(t_0-h),t_0-h)+\int^{t_0}_{t_0-h}L(\gamma(t),\dot{\gamma}(t),t,\omega)dt + h\alpha(\omega)
	\end{equation}

	Let $w=\dot{\gamma}(t_0)$,since $(p_t,p_x) \in D^{-}u^{\omega}(x_0,t_0)$, we know that
	\begin{equation}{\nonumber}
	\liminf_{h \to 0+}\frac{u^{\omega}(\gamma(t_0-h),t_0-h)-u^{\omega}(\gamma(t_0),t_0)}{h}\ge -p_x\cdot w -p_t
	\end{equation} 
	Hence, we have
	\begin{equation}{\nonumber}
	-p_x\cdot w-p_t \le -\limsup_{h \to 0+} \frac{1}{h}\int^{t_0}_{t_0-h}L(\gamma(t),\dot{\gamma}(t),t,\omega)dt -\alpha(\omega) \le -L(x_0,w,t_0,\omega) -\alpha(\omega)
	\end{equation}
	Therefore,we have 
	\begin{equation}{\nonumber}
	H(x,p_x,t,\omega)=\sup_{v \in T_{x_0}M}\{p_x\cdot v-L(x_0,v,t_0,\omega)\}\ge p_x\cdot w-L(x_0,w,t_0,\omega) \ge \alpha(\omega)-p_t
	\end{equation}
	From Step 1 and Step 2, we know that $u^{\omega}(x,t)$ is a viscosity solution of Hamilton-Jacobi equation.
\end{proof}

\section{Global Minimizer}
In this section, we discuss the global minimizer and invariant measure under the skew-product dynamics system.
\begin{definition}
	When $\omega$ is fixed, 
	
	1.an absolutely continuous orbit $\{\gamma^{\omega}(t), t\in R \}$ is called a global minimizer of the Lagrangian if for any fixed time interval $[s,t]$, we have
	\begin{equation}\nonumber
	A^{\omega}(s,\gamma^{\omega}(s);t,\gamma^{\omega}(t))=\int_{s}^{t}L(\gamma^{\omega}(\sigma),\dot{\gamma^{\omega}}(\sigma),\sigma,\omega)d\sigma
	\end{equation}
	
	2. fix $t_0 \in R$,an absolutely continuous orbit $\{\gamma^{\omega}(t), t \le t_0 \}$ is called a calibrated curve of $u^{\omega}(x,t)$,if for any $s<t \le t_0$, we have
	\begin{equation}\nonumber u^{\omega}(\gamma^{\omega}(t),t)=u^{\omega}(\gamma^{\omega}(s),s)+ A^{\omega}(s,\gamma^{\omega}(s);t,\gamma^{\omega}(t))+(t-s)\alpha(\omega)
	\end{equation}
	
	3. fix $t_0 \in R$,an absolutely continuous orbit $\{\gamma^{\omega}(t), t \ge t_0 \}$ is called a calibrated curve of $u^{\omega}_{+}(x,t)$,if for any $s > t \ge t_0$, we have
	\begin{equation}\nonumber u^{\omega}_{+}(\gamma^{\omega}(t),t)=u^{\omega}_{+}(\gamma^{\omega}(s),s)+ A^{\omega}(t,\gamma^{\omega}(t);s,\gamma^{\omega}(s))+(s-t)\alpha(\omega)
	\end{equation}
\end{definition}

\begin{lemma}
	We fix $\omega$ and $t_0$, if $u^{\omega}(x,t_0)$ is differentiable at $x_0 \in M$, there is a unique calibrated curve with the end point $(x_0,t_0)$, for $u^{\omega}(x,t)$, $t \le t_0$; denoted as  $\gamma^{\omega,-}_{x,t_0}$. Similarly,if $u^{\omega}_{+}(x,t_0)$ is differentiable at $x_0 \in M$, there is a unique calibrated curve with the end point $(x_0,t_0)$, for $u^{\omega}_{+}(x,t)$, $t \ge t_0$; denoted as  $\gamma^{\omega,+}_{x,t_0}$.
\end{lemma}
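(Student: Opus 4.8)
The plan is to construct the curve by running the dynamic programming identity of Lemma 6.4 backward from the endpoint $(x_0,t_0)$, and then to convert the differentiability hypothesis, via the semiconcavity of the action established in Lemma 5.1, into the statement that the terminal velocity at $(x_0,t_0)$ is uniquely determined. First I would fix $s<t_0$ and apply formula 2 of Lemma 6.4, which yields a point $y_s\in M$ with
\[
u^{\omega}(x_0,t_0)=u^{\omega}(y_s,s)+A^{\omega}(s,y_s;t_0,x_0)+(t_0-s)\alpha(\omega),
\]
together with a minimizer $\gamma^{(s)}\in\Sigma^{\omega}_{m}(s,y_s;t_0,x_0)$. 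I would then check that $\gamma^{(s)}$ is calibrated on $[s,t_0]$: for $s<\tau<t_0$, the two domination inequalities furnished by formula 2 (for the pairs $(s,\tau)$ and $(\tau,t_0)$), combined with the additivity $A^{\omega}(s,y_s;t_0,x_0)=A^{\omega}(s,y_s;\tau,\gamma^{(s)}(\tau))+A^{\omega}(\tau,\gamma^{(s)}(\tau);t_0,x_0)$ (restrictions of minimizers are minimizers), force every intermediate inequality to be an equality. This produces the calibration identity for each pair $s<t\le t_0$ along this segment.

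The heart of the argument is to show that the terminal velocity $\dot\gamma^{(s)}(t_0)$ is independent of $s$. Write $p_0:=D_x u^{\omega}(x_0,t_0)$, which exists by hypothesis, and set $\phi_s(x):=u^{\omega}(y_s,s)+A^{\omega}(s,y_s;t_0,x)+(t_0-s)\alpha(\omega)$. Formula 2 gives $\phi_s\ge u^{\omega}(\cdot,t_0)$ everywhere, with equality at $x_0$, so $\phi_s-u^{\omega}(\cdot,t_0)$ attains a minimum at $x_0$; since $u^{\omega}(\cdot,t_0)$ is differentiable there, this yields $p_0\in D^{-}_{x}A^{\omega}(s,y_s;t_0,\cdot)$ at $x_0$. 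But $A^{\omega}(s,y_s;t_0,\cdot)$ is semiconcave by Lemma 5.1, so its superdifferential is always nonempty by Proposition 5.1(4); the additional nonemptiness of the subdifferential forces, through Proposition 5.1(2), differentiability of $A^{\omega}(s,y_s;t_0,\cdot)$ at $x_0$ with $D_x A^{\omega}=p_0$. Corollary 5.1 then makes the minimizer $\gamma^{(s)}$ unique, and Lemma 5.2 gives $\partial_v L(x_0,\dot\gamma^{(s)}(t_0),t_0,\omega)=p_0$, whence $\dot\gamma^{(s)}(t_0)=\partial_p H(x_0,p_0,t_0,\omega)=:v_0$ is the same for every $s$. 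I expect this differentiability-to-momentum step to be the main obstacle, since it is the only place the hypothesis enters and it requires the super/subdifferential bookkeeping of Proposition 5.1 to be carried out cleanly.

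Finally, all the curves $\gamma^{(s)}$ solve the Euler--Lagrange equation with the common terminal data $(x_0,v_0)$ at time $t_0$, so uniqueness for this second-order ODE shows they are restrictions of a single orbit; completeness (Definition 2.1(IV)) then lets me define $\gamma^{\omega,-}_{x_0,t_0}(t):=\pi\,\Phi_{t_0,t}(x_0,v_0)$ on all of $(-\infty,t_0]$, and it inherits calibration on each $[s,t_0]$, hence on $(-\infty,t_0]$. For uniqueness, any calibrated curve $\eta$ ending at $(x_0,t_0)$ is a minimizer on each $[s,t_0]$, so the same touching argument forces $\dot\eta(t_0)=v_0$, and ODE uniqueness gives $\eta=\gamma^{\omega,-}_{x_0,t_0}$. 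The statement for $u^{\omega}_{+}$ and $t\ge t_0$ follows by the time-reversed version, using $T^{\omega}_{\lambda,+}$ and the forward analogue of formula 2 noted at the end of Lemma 6.4, with the initial endpoint playing the role of $x$ in Lemma 5.2.
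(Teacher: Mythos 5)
Your proposal is correct and follows essentially the same route as the paper: apply the dynamic programming identity (formula 2 of Lemma 6.4) backward from $(x_0,t_0)$, use the differentiability hypothesis together with the semiconcavity of the action (Lemma 5.1, Proposition 5.1, Corollary 5.1, Lemma 5.2) to pin down the terminal momentum $p_0=\partial_x u^{\omega}(x_0,t_0)$ and hence the unique minimizing segment for each $s<t_0$, then glue the segments via Euler--Lagrange uniqueness into a single calibrated curve on $(-\infty,t_0]$. In fact your write-up makes explicit two steps the paper leaves implicit --- the touching argument showing $p_0\in D^{-}_{x}A^{\omega}(s,y_s;t_0,\cdot)$ forces differentiability of the action at $x_0$, and the ODE-uniqueness justification that the segments for different $s$ share the terminal data $(x_0,v_0)$ --- so no changes are needed.
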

\begin{proof}
	By lemma 3.3, for any $s<t_0$, $x_0 \in B^{\omega}_{t_0}$, we have 
	\begin{equation}\nonumber
	u^{\omega}(x_0,t)=\min_{y\in M}\{u^{\omega}(y,s)+A^{\omega}(s,y;t,x)+(t-s)\alpha(\omega)\}
	\end{equation}
	Since $u^{\omega}(y,s)$ and $A^{\omega}(s,y;t,x)$ is semiconcave with respect to $y$, so there is $x(s) \in M$ such that 
	\begin{equation}\nonumber
	u^{\omega}(x_0,t)=u^{\omega}(x(s),s)+A^{\omega}(s,x(s);t,x_0)+(t-s)\alpha(\omega)
	\end{equation}
	and $\partial_x u^{\omega}(x(s),s)+\partial_y A^{\omega}(s,x(s);t,x_0)=0$. By Corollary 5.1 , $ \Sigma^{\omega}_{m}(s,x(s);t_0,x_0)$ contains only one lagrangian trajectory $(x(t),p(t))$, $p(t)=\partial_v L(x(s),\dot{x}(s),s,\omega)$ with $s \le t \le t_0$, with $x(t_0)=x_0$, $p(s)=\partial_x u^{\omega}(x(s),s)=-\partial_y A^{\omega}(s,x(s);t,x_0)$ where $p(t)=\partial_x u^{\omega}(x(t_0),t_0)=\partial_x A^{\omega}(s,x(s);t,x_0)$. Therefore, it is obvious that $x(s)$ is unique. 
	
	For any $s_1 < s_2 < t_0$, the above lagrangian trajectory coincides for $s_2 \le t \le t_0$. So we can extend the trajectory $x(s)$ for time $-\infty < s \le t_0$, with $x(t_0)=x_0$. This is a unique calibrated curve for $u^{\omega}(x,t)$, $t \le t_0$ with the end point a $x(t_0)=x_0$.
\end{proof}

\begin{lemma}
	When $\omega$, $t_0$ is fixed, the set $B^{\omega}_{t_0}$ is defined as the subsets of $M$ where the function $u^{\omega}(y,t_0)+u^{\omega}_{+}(y,t_0)$ attains the minimum. We have the following results:
	
	1. $B^{\omega}_{t_0}$ is a closed nonempty subset of $M$. $ u^{\omega}(x,t)$, $ u^{\omega}_{+}(x,t)$ are differentiable over $B^{\omega}_{t_0}$;
	
	2. for any $x_0 \in B^{\omega}_{t_0}$, $\partial_{x}u^{\omega}(x_0,t)+\partial_{x}u^{\omega}_{+}(x_0,t)=0$. 
	
	3. $B^{\theta(s)\omega}_{t}=B^{\omega}_{t+s}$, $B^{\omega}_{t+1}=B^{\omega}_{t}$
\end{lemma}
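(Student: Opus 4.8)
The plan is to derive all three items from the semiconcavity of the two weak KAM solutions, combined with Proposition 5.1 and the covariance relations of Lemma 6.4. I will not need any domination inequality between $u^{\omega}$ and $u^{\omega}_{+}$; only that their sum is semiconcave.

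First I would show that both $u^{\omega}(\cdot,t_0)$ and $u^{\omega}_{+}(\cdot,t_0)$ belong to $SC(M)$. By Lemma 5.1 the actions $A^{\omega}(t_0-\lambda,\cdot\,;t_0,\cdot)$ and $A^{\omega}(t_0,\cdot\,;t_0+\lambda,\cdot)$ are equi-semiconcave, with a semiconcavity constant that can be chosen uniform once $\lambda$ is bounded below by some $\lambda_0>0$. Since $T^{\omega}_{\lambda}(u)(\cdot,t_0)$ and $T^{\omega}_{\lambda,+}(u)(\cdot,t_0)$ are obtained from these actions by adding the continuous term $u(y)$ and minimizing over $y\in M$, they inherit equi-semiconcavity in the free spatial variable, uniformly for $\lambda\ge\lambda_0$. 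Taking the $\liminf$ as $\lambda\to+\infty$, which is finite by the uniform bounds of Lemma 6.2, preserves the common semiconcavity constant, so $u^{\omega}(\cdot,t_0),u^{\omega}_{+}(\cdot,t_0)\in SC(M)$. Hence $g(y):=u^{\omega}(y,t_0)+u^{\omega}_{+}(y,t_0)$ is continuous and semiconcave on the compact manifold $M$; it therefore attains its minimum, so $B^{\omega}_{t_0}$ is nonempty, and being the minimum level set of a continuous function it is closed. This settles the topological part of item 1.

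Next I would fix $x_0\in B^{\omega}_{t_0}$ and establish differentiability there together with item 2. As $x_0$ is a global minimum of $g$, we have $0\in D^{-}_{x_0}g$, while semiconcavity of $g$ gives $D^{+}_{x_0}g\neq\emptyset$ by Proposition 5.1(4). By Proposition 5.1(2), simultaneous nonemptiness forces $g$ to be differentiable at $x_0$ with $D_{x_0}g=0$, that is $D^{+}_{x_0}g=\{0\}$. To pass to the summands I use only the elementary inclusion $D^{+}_{x_0}u^{\omega}(\cdot,t_0)+D^{+}_{x_0}u^{\omega}_{+}(\cdot,t_0)\subseteq D^{+}_{x_0}g$, which holds because the $\limsup$ defining the superdifferential is subadditive. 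Thus $p+q=0$ for every $p\in D^{+}_{x_0}u^{\omega}(\cdot,t_0)$ and every $q\in D^{+}_{x_0}u^{\omega}_{+}(\cdot,t_0)$; fixing one and varying the other shows each superdifferential is a singleton. By Proposition 5.1(5) both $u^{\omega}(\cdot,t_0)$ and $u^{\omega}_{+}(\cdot,t_0)$ are then differentiable at $x_0$, which completes item 1, and the identity $p+q=0$ becomes $\partial_x u^{\omega}(x_0,t_0)+\partial_x u^{\omega}_{+}(x_0,t_0)=0$, which is item 2.

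For item 3 I would invoke the covariance and periodicity identities of Lemma 6.4 and their forward analogues. Writing $g^{\omega}_{t}(y)=u^{\omega}(y,t)+u^{\omega}_{+}(y,t)$, the relation $u^{\theta(s)\omega}(y,t)=u^{\omega}(y,t+s)$ together with its $u_+$ counterpart gives $g^{\theta(s)\omega}_{t}=g^{\omega}_{t+s}$ as functions of $y$, so the two share the same minimizing set and $B^{\theta(s)\omega}_{t}=B^{\omega}_{t+s}$. Likewise the $1$-periodicity $u^{\omega}(y,t+1)=u^{\omega}(y,t)$, and the same for $u^{\omega}_{+}$, yields $g^{\omega}_{t+1}=g^{\omega}_{t}$ and hence $B^{\omega}_{t+1}=B^{\omega}_{t}$. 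The main obstacle is the semiconcavity step of the second paragraph: one must verify that the constant from Lemma 5.1 is genuinely uniform in $\lambda$ on a half-line $\lambda\ge\lambda_0$, so that passage to the $\liminf$ does not enlarge it, and that minimizing over $y\in M$ and adding $u(y)$ leaves the estimate intact. Once semiconcavity is in hand the remainder is a routine application of Proposition 5.1; the only point requiring care there is to rely on the easy inclusion for superdifferentials of a sum rather than on full additivity, the reverse inclusion being unnecessary here.
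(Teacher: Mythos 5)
Your proof is correct and takes essentially the same route as the paper: the paper's own (very terse) proof likewise rests on semiconcavity of $u^{\omega}(\cdot,t_0)$ and $u^{\omega}_{+}(\cdot,t_0)$, the standard differentiability of a sum of semiconcave functions at a minimum point, and the covariance and periodicity identities of Lemma 6.4. You have merely supplied the details the paper leaves as obvious, namely the uniform semiconcavity constant surviving the Lax--Oleinik minimization and the $\liminf$, and the superdifferential argument via Proposition 5.1.
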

\begin{proof}
	It is obvious that $B^{\omega}_{t_0}$ is closed nonempty subset. Since $u^{\omega}(x,t_0)$,$u^{\omega}_{+}(x,t_0)$ are both semiconcave function over $M$, they are differentiable over $B^{\omega}_{t_0}$ and  $\partial_{x}u^{\omega}(x_0,t_0)+\partial_{x}u^{\omega}_{+}(x_0,t_0)=0$. By lemma 3.3,   $B^{\theta(s)\omega}_{t}=B^{\omega}_{t+s}$, $B^{\omega}_{t+1}=B^{\omega}_{t}$ are obvious.
\end{proof}

\begin{proposition}
	When $\omega$ is fixed, for any $t_0 \in R$, if the following hypothesis holds:
	
	$u^{\omega}(\gamma^{\omega}(t_0),t_0)+u^{\omega}_{+}(\gamma^{\omega}(t_0),t_0)=\min_{x \in M}\{u^{\omega}(x,t_0)+u^{\omega}_{+}(x,t_0) \}$
	
	$\{\gamma^{\omega}(t),t \le t_0\}$ is a calibrated curve of $u^{\omega}(x,t)$,
	
	$\{\gamma^{\omega}(t),t \ge t_0\}$ is a calibrated curve of $u^{\omega}_{+}(x,t)$,
	
	then $\{\gamma^{\omega}(t), t\in R \}$ is a global minimizer of the lagrangian.
\end{proposition}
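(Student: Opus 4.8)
The plan is to reduce the global-minimizer property to three cases according to the position of the interval $[s,t]$ relative to $t_0$, and to treat the only genuinely new case, $s<t_0<t$, by splitting an arbitrary competitor curve at time $t_0$ and playing the two one-sided calibrations against the minimization hypothesis at $t_0$. The one-sided cases $t\le t_0$ and $s\ge t_0$ are immediate: on $(-\infty,t_0]$ the curve is calibrated for $u^{\omega}$ and on $[t_0,\infty)$ it is calibrated for $u^{\omega}_{+}$, and the calibrated curves produced in Lemma 7.1 are Lagrangian minimizers on each subinterval, so $\gamma^{\omega}|[s,t]$ is a minimizer directly.

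First I would record the two domination inequalities. Formula 2 of Lemma 6.4 gives, for every $s<t$ and every $y\in M$,
\[
u^{\omega}(y,t)\le u^{\omega}(x,s)+A^{\omega}(s,x;t,y)+(t-s)\alpha(\omega),
\]
and the ``similar version'' for $u^{\omega}_{+}$ asserted at the end of Lemma 6.4, whose semigroup identity propagates toward decreasing time, gives for $t_0<t$,
\[
u^{\omega}_{+}(z,t_0)\le u^{\omega}_{+}(y,t)+A^{\omega}(t_0,z;t,y)+(t-t_0)\alpha(\omega).
\]
Along $\gamma^{\omega}$ both become equalities, and since the curve is a minimizer on each side we have $\int_{s}^{t_0}L(\gamma^{\omega},\dot\gamma^{\omega},\sigma,\omega)\,d\sigma=A^{\omega}(s,\gamma^{\omega}(s);t_0,\gamma^{\omega}(t_0))$ together with the analogous identity on $[t_0,t]$ for $u^{\omega}_{+}$.

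For the straddling case fix $s<t_0<t$, let $\eta\in\Sigma(s,\gamma^{\omega}(s);t,\gamma^{\omega}(t))$ be any competitor, and set $z=\eta(t_0)$. Splitting the action at $t_0$ and bounding each half by the minimal action gives $\int_s^t L(\eta,\dot\eta,\sigma,\omega)\,d\sigma\ge A^{\omega}(s,\gamma^{\omega}(s);t_0,z)+A^{\omega}(t_0,z;t,\gamma^{\omega}(t))$. Applying the two domination inequalities to the right-hand side yields
\[
\int_s^t L(\eta,\dot\eta,\sigma,\omega)\,d\sigma\ge \bigl[u^{\omega}(z,t_0)+u^{\omega}_{+}(z,t_0)\bigr]-u^{\omega}(\gamma^{\omega}(s),s)-u^{\omega}_{+}(\gamma^{\omega}(t),t)-(t-s)\alpha(\omega).
\]
The first hypothesis states that $z\mapsto u^{\omega}(z,t_0)+u^{\omega}_{+}(z,t_0)$ is minimized at $\gamma^{\omega}(t_0)$, so the bracket is bounded below by $u^{\omega}(\gamma^{\omega}(t_0),t_0)+u^{\omega}_{+}(\gamma^{\omega}(t_0),t_0)$. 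Finally, adding the two one-sided calibration equalities for $\gamma^{\omega}$ on $[s,t_0]$ and $[t_0,t]$ shows that this last lower bound equals exactly $\int_s^t L(\gamma^{\omega},\dot\gamma^{\omega},\sigma,\omega)\,d\sigma$, whence $\int_s^t L(\eta,\dot\eta,\sigma,\omega)\,d\sigma\ge\int_s^t L(\gamma^{\omega},\dot\gamma^{\omega},\sigma,\omega)\,d\sigma$ and $\gamma^{\omega}|[s,t]$ is a minimizer.

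The main obstacle is the correct bookkeeping of the forward object $u^{\omega}_{+}$: one must verify that its semigroup identity runs in the direction of decreasing time, so that the domination inequality at $t_0$ carries the sign displayed above, and that the forward calibration of $\gamma^{\omega}$ on $[t_0,\infty)$ realizes equality as a genuine minimizer matching the backward side at the single junction time $t_0$. Once the two one-sided equalities are aligned with the minimization hypothesis at $t_0$, the telescoping of the $u^{\omega}$- and $u^{\omega}_{+}$-terms is automatic and the competitor bound closes.
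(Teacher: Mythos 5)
Your proof is correct and follows essentially the same route as the paper's: both split the comparison at the time $t_0$ and combine the one-sided domination inequalities (from Lemma 6.4 and its forward analogue) with the calibration equalities along $\gamma^{\omega}$ and the minimality of $u^{\omega}+u^{\omega}_{+}$ at $(\gamma^{\omega}(t_0),t_0)$, yours phrased directly against an arbitrary competitor where the paper argues by contradiction. Your explicit three-case split is a minor improvement in bookkeeping, since the paper's proof silently reuses the symbol $t_0$ for an intermediate time in $(t_1,t_2)$ and, as written, only literally covers the case where the competitor's interval straddles the hypothesis time.
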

\begin{proof}
	If the conclusion falses, there exists time $t_1<t_2$, and an absolutely continuous curve $\{\gamma_1(t),t_1\le t \le t_2 \}$ with the terminal points $\gamma_1(t_1)=\gamma^{\omega}(t_1)$,$\gamma_1(t_2)=\gamma^{\omega}(t_2)$, such that 
	\begin{equation}\nonumber
	\int_{t_1}^{t_2}L(\gamma_1(\sigma),\dot{\gamma_1}(\sigma),\sigma,\omega)d\sigma <\int_{t_1}^{t_2}L(\gamma^{\omega}(\sigma),\dot{\gamma^{\omega}}(\sigma),\sigma,\omega)d\sigma
	\end{equation}
	then there exist $t_0 \in (t_1,t_2)$ with $\gamma_1(t_0)\ne\gamma^{\omega}(t_0)$.We have the following:
	\begin{equation}\nonumber
	\begin{aligned}
		&u^{\omega}(\gamma_1(t_0),t_0)+u^{\omega}_{+}(\gamma_1(t_0),t_0) 
		\le u^{\omega}(\gamma_1(t_1),t_1)+\int_{t_1}^{t_0}L(\gamma_1(\sigma),\dot{\gamma_1}(\sigma),\sigma,\omega)d\sigma \\
		& + (t_0-t_1)\alpha(\omega)+u^{\omega}_{+}(\gamma_1(t_2),t_2)+ \int_{t_0}^{t_2}L(\gamma_1(\sigma),\dot{\gamma_1}(\sigma),\sigma,\omega)d\omega + (t_2-t_0)\alpha(\omega) \\
		&=u^{\omega}(\gamma_1(t_1),t_1)+ u^{\omega}_{+}(\gamma_1(t_2),t_2)+\int_{t_1}^{t_2}L(\gamma_1(\sigma),\dot{\gamma_1}(\sigma),\sigma,\omega)d\omega+ (t_2-t_1)\alpha(\omega)
	\end{aligned}
	\end{equation}
	and
	\begin{equation}\nonumber
	\begin{aligned}
		&u^{\omega}(\gamma^{\omega}(t_0),t_0)+u^{\omega}_{+}(\gamma^{\omega}(t_0),t_0)=u^{\omega}(\gamma^{\omega}(t_1),t_1)+ \int_{t_1}^{t_0}L(\gamma^{\omega}(\sigma),\dot{\gamma^{\omega}}(\sigma),\sigma,\omega)d\omega\\
		&+ (t_0-t_1)\alpha(\omega)+u^{\omega}_{+}(\gamma^{\omega}(t_2),t_2) + \int_{t_0}^{t_2}L(\gamma^{\omega}(\sigma),\dot{\gamma^{\omega}}(\sigma),\sigma,\omega)d\sigma + (t_2-t_0)\alpha(\omega)\\
		&=u^{\omega}(\gamma^{\omega}(t_1),t_1)+u^{\omega}_{+}(\gamma^{\omega}(t_2),t_2)+\int_{t_1}^{t_2}L(\gamma^{\omega}(\sigma),\dot{\gamma^{\omega}}(\sigma),\sigma,\omega)d\sigma+(t_2-t_1)\alpha(\omega)
	\end{aligned}
	\end{equation}
	So, $u^{\omega}(\gamma_1(t_0),t_0)+u^{\omega}_{+}(\gamma_1(t_0),t_0)<u^{\omega}(\gamma^{\omega}(t_0),t_0)+u^{\omega}_{+}(\gamma^{\omega}(t_0),t_0)$, this contradicts the hypothesis. Hence we can complete the proof.
\end{proof}

\begin{corollary}
	For fixed $\omega$, $t_0$; for any $x_0 \in B^{\omega}_{t_0}$,  a global minimizer $\gamma^{\omega}_{x_0,t_0}$ exists, $\gamma^{\omega}_{x_0,t_0}(t_0)=x_0$.In addition, the set of global minimizer is nonempty for each $\omega \in \Omega$.
\end{corollary}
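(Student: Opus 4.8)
The plan is to fix $\omega$ and $t_0$, take an arbitrary $x_0 \in B^{\omega}_{t_0}$, and build a global minimizer by splicing the two one-sided calibrated curves issuing from $(x_0,t_0)$ and then invoking Proposition 7.1. By conclusion 1 of Lemma 7.2, both $u^{\omega}(\cdot,t_0)$ and $u^{\omega}_{+}(\cdot,t_0)$ are differentiable at every point of $B^{\omega}_{t_0}$, hence at $x_0$. Therefore Lemma 7.1 applies on both sides: it furnishes a unique backward calibrated curve $\gamma^{\omega,-}_{x_0,t_0}$ for $u^{\omega}$ on $(-\infty,t_0]$ and, by the symmetric statement, a unique forward calibrated curve $\gamma^{\omega,+}_{x_0,t_0}$ for $u^{\omega}_{+}$ on $[t_0,\infty)$, both passing through $x_0$ at time $t_0$. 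I would then set
\[
\gamma^{\omega}_{x_0,t_0}(t)=
\begin{cases}
\gamma^{\omega,-}_{x_0,t_0}(t), & t \le t_0,\\
\gamma^{\omega,+}_{x_0,t_0}(t), & t \ge t_0,
\end{cases}
\]
and argue that this concatenation is a bona fide global minimizer.

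The heart of the argument is showing that the two pieces join into a single extremal at $t=t_0$. From the construction in Lemma 7.1 the backward piece carries momentum $p_{-}(t_0)=\partial_x u^{\omega}(x_0,t_0)=\partial_v L(x_0,\dot{\gamma}^{\omega,-}_{x_0,t_0}(t_0),t_0,\omega)$, while the mirror-image computation for the forward piece (using Lemma 5.2 at the initial endpoint of the action) gives $p_{+}(t_0)=-\partial_x u^{\omega}_{+}(x_0,t_0)=\partial_v L(x_0,\dot{\gamma}^{\omega,+}_{x_0,t_0}(t_0),t_0,\omega)$. Conclusion 2 of Lemma 7.2 states precisely that $\partial_x u^{\omega}(x_0,t_0)+\partial_x u^{\omega}_{+}(x_0,t_0)=0$, so $p_{-}(t_0)=p_{+}(t_0)$. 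Because $L$ is Tonelli, the Legendre map $v\mapsto \partial_v L(x_0,v,t_0,\omega)$ is a diffeomorphism, and equality of momenta forces equality of velocities, $\dot{\gamma}^{\omega,-}_{x_0,t_0}(t_0)=\dot{\gamma}^{\omega,+}_{x_0,t_0}(t_0)$. Each piece solves the Euler-Lagrange equation, so by uniqueness of the solution with prescribed position and velocity at $t_0$ the spliced curve is a single $C^2$ extremal on all of $\mathbb{R}$.

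Once the curve is assembled, the three hypotheses of Proposition 7.1 hold verbatim: $x_0\in B^{\omega}_{t_0}$ minimizes $u^{\omega}(\cdot,t_0)+u^{\omega}_{+}(\cdot,t_0)$; the restriction to $t\le t_0$ is calibrated for $u^{\omega}$; and the restriction to $t\ge t_0$ is calibrated for $u^{\omega}_{+}$. Proposition 7.1 then certifies that $\gamma^{\omega}_{x_0,t_0}$ is a global minimizer with $\gamma^{\omega}_{x_0,t_0}(t_0)=x_0$, which is the first assertion. For the second assertion, Lemma 7.2 guarantees $B^{\omega}_{t_0}\neq\emptyset$ for every $\omega$ (and every $t_0$), so picking any $x_0\in B^{\omega}_{t_0}$ yields a global minimizer; hence the set of global minimizers is nonempty for each $\omega\in\Omega$. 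I expect the velocity-matching at the splice point to be the only real obstacle: everything rests on correctly identifying the one-sided momenta with $\partial_x u^{\omega}$ and $-\partial_x u^{\omega}_{+}$, and then on the orthogonality relation of Lemma 7.2, for without matching velocities the concatenation would be merely continuous rather than an extremal, and Proposition 7.1 could not be brought to bear.
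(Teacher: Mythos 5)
Your proposal is correct and follows essentially the same route the paper intends: the corollary is left without an explicit proof precisely because it is the immediate concatenation of Lemma 7.2 (nonemptiness of $B^{\omega}_{t_0}$ and differentiability of $u^{\omega}$, $u^{\omega}_{+}$ there), Lemma 7.1 (the two one-sided calibrated curves through $(x_0,t_0)$), and Proposition 7.1. One remark: your velocity-matching argument at $t=t_0$, though correct, is superfluous and your closing claim is a slight misreading --- Proposition 7.1 only requires an absolutely continuous curve satisfying the minimum property and the two calibration hypotheses, which the spliced curve satisfies by construction, so it applies without any splicing regularity; that the minimizer is in fact a single $C^2$ extremal then follows a posteriori from the Euler--Lagrange equation.
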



\begin{assumption}
$(\Omega,\mathscr{F},\mathbb{P})$ is a Polish Space.
\end{assumption}
 
Starting from here, we assume Assumption 7.1 holds.

From the analysis above, fix $\omega$, the set of global minimizer is nonempty. We denote the set of global minimizer as
 $H_{\omega}=\{\gamma^{\omega}:R \to M | \gamma^{\omega}
  $is a global minimizer of action $ A^{\omega}\}$.
  
   If $\gamma^{\omega}$ is a global minimizer, $\gamma^{\omega}$ satisfies the Euler-Lagrange equation. $\gamma^{\omega}$ is decided by $(\gamma^{\omega}(0),{\dot{\gamma}}^{\omega}(0))$. So we can define the set $G_{\omega}=\{(\gamma^{\omega}(0),{\dot{\gamma}}^{\omega}(0)|\gamma^{\omega}$ is a global minimizer of the action $A^{\omega} \}$.  $G_{\omega}$ is a compact subset of $TM$, due to Tonelli Theorem and continuity of Lagrangian Action.

 $G_{\omega}$ and $H_{\omega}$ corresponds one to one, $L(x,v,t+s,\omega)=L(x,v,t,\theta(s)\omega)$. if $\gamma^{\omega}(t)$ is a global minimizer of the action $A^{\omega}$, then $\gamma^{\theta(s)\omega}(t)=\gamma^{\omega}(t+s)$ is the global minimizer of $A^{\theta(s)\omega}$, we define the one-to-one map $\Theta(s): G_{\omega} \to G_{\theta(s)\omega}$ as follows, if $(\gamma^{\omega}(0),\dot{\gamma}^{\omega}(0)) \in G_{\omega}$, let $ \Theta(s)(\gamma^{\omega}(0),{\dot{\gamma}}^{\omega}(0))=(\gamma^{\theta(s)\omega}(0),{\dot\gamma}^{\theta(s)\omega}(0))=(\gamma^{\omega}(-s),{\dot{\gamma}}^{\omega}(-s))$.When $\omega$ is fixed, $\Theta(-s)$ is the Lagrangian flow the global minimizer on $TM$. $\Theta(s+t)=\Theta(s)\Theta(t)$.$G_{\omega}=G_{\theta(n)\omega}$.

Let $G=\bigcup_{\omega \in \Omega}G_{\omega}\times \{\omega\} \in TM \times \Omega$. Define $\Gamma(s):G \to G$, if $(x,v,\omega) \in G$, $\Gamma(s)(x,v,\omega)=({\Theta(s)|}_{G_{\omega}}(x,v),\theta(s)\omega)$.It is clear that $\Gamma(s+t)=\Gamma(s)\Gamma(t)$. We define the set $\Lambda=\{\mu|\mu$ is a probability measure with support in $G \} $. $\Lambda$ is a convex set. 
\begin{lemma}
Fix $\omega \in \Omega$, $ P(G_{\omega})$ is the convex hull spanned by $\{\delta_{(x,v)}|(x,v) \in G_{\omega}\}$.
\end{lemma}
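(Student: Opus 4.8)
The plan is to view $P(G_\omega)$ as the space of Borel probability measures on the \emph{compact} metric space $G_\omega$ (a compact subset of the complete separable metric space $TM$ by Lemma 4.1), equip it with the weak topology induced by the Prokhorov metric, and identify it with the convex hull of the Dirac masses via the Krein--Milman theorem, where the hull is understood in the weak closed sense. First I would record the two structural facts. Convexity of $P(G_\omega)$ is immediate, since a convex combination $t\mu_1+(1-t)\mu_2$ of probability measures on $G_\omega$ is again a probability measure supported in $G_\omega$. Compactness of $P(G_\omega)$ in the Prokhorov metric follows from Prokhorov's theorem: because $G_\omega$ is compact, every family of probability measures on it is tight, and by Lemma 3.3 and Lemma 3.4 the space $P(G_\omega)$ is complete and separable; hence $P(G_\omega)$ is a compact convex subset of the locally convex space of finite signed Borel measures on $G_\omega$ with the weak-$*$ topology.

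Next I would determine the extreme points of $P(G_\omega)$, claiming they are exactly the Diracs $\delta_{(x,v)}$ with $(x,v)\in G_\omega$. Each $\delta_{(x,v)}$ is extreme because an identity $\delta_{(x,v)}=t\nu_1+(1-t)\nu_2$ with $t\in(0,1)$ forces both $\nu_i$ to charge only the point $(x,v)$, so $\nu_1=\nu_2=\delta_{(x,v)}$. Conversely, if $\mu$ is not a Dirac its support contains two distinct points, so there is a Borel set $A\subset G_\omega$ with $0<\mu(A)<1$; then $\mu=\mu(A)\,\mu(\,\cdot\mid A)+\mu(A^c)\,\mu(\,\cdot\mid A^c)$ exhibits $\mu$ as a nontrivial convex combination, so $\mu$ is not extreme. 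Applying Krein--Milman to the compact convex set $P(G_\omega)$ then yields that it equals the closed convex hull of its extreme points, that is, of $\{\delta_{(x,v)}:(x,v)\in G_\omega\}$.

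The hard part, and really the only subtlety, is the distinction between the convex hull and the \emph{closed} convex hull: the bare finite convex hull of the Diracs consists only of finitely supported measures, which exhaust $P(G_\omega)$ precisely when $G_\omega$ is finite, so the closure is genuinely needed. I would make this explicit and, for self-containedness, reprove the spanning property by hand in the spirit of Section 3. Given $\mu\in P(G_\omega)$ and $n\in\mathbb{N}$, partition $G_\omega$ into finitely many Borel sets of diameter less than $1/n$, select a point $(x_i,v_i)$ in each, and form the finitely supported measure $\mu_n=\sum_i \mu(A_i)\,\delta_{(x_i,v_i)}$, a convex combination of Diracs. Using the uniform continuity on the compact set $G_\omega$ of any $f\in C_b(G_\omega)$, one checks that $\int f\,d\mu_n\to\int f\,d\mu$, so $\mu_n\rightharpoonup\mu$ by Definition 3.1. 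This shows every element of $P(G_\omega)$ is a weak limit of convex combinations of Diracs, which both recovers the Krein--Milman conclusion in the present concrete setting and furnishes the explicit approximation underlying the statement.
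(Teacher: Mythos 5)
Your proof is correct, and it takes a genuinely richer route than the paper's. The paper's own proof is, in substance, only your final paragraph: it asserts that the finitely supported measures $F_{\omega}=\{\sum_{k=1}^{n}a_k\delta_{(x_k,v_k)}\}$ are dense in $P(G_{\omega})$ for the Prokhorov metric, ``by the same argument as Lemma 3.3'' (the separability of $\mathcal{P}(X)$, citing Billingsley), and silently reads the lemma's ``convex hull'' as the weak closure of these finite combinations --- exactly the partition-into-small-cells approximation you carry out explicitly, with weights $\mu(A_i)$ and uniform continuity of $f$ on the compact $G_{\omega}$. What you add on top is the Krein--Milman framing: compactness of $P(G_{\omega})$ via Prokhorov's theorem, the embedding of $P(G_{\omega})$ as a compact convex subset of the weak-$*$ dual of $C(G_{\omega})$, and a complete two-sided identification of the extreme points of $P(G_{\omega})$ as precisely the Dirac masses. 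This costs more machinery, but it buys something the paper's density argument does not actually deliver: the proof of Theorem 7.1 later invokes the implication ``$\mu_{\omega}$ is an extreme point of $P(G_{\omega})$, hence $\mu_{\omega}=\delta_{(x_{\omega},v_{\omega})}$,'' which follows from your extreme-point characterization (or from Milman's partial converse, since $\{\delta_{(x,v)}:(x,v)\in G_{\omega}\}$ is weakly closed when $G_{\omega}$ is compact) but is not a consequence of density of convex combinations of Diracs alone. You are also right to insist on the distinction between the algebraic convex hull --- which contains only finitely supported measures --- and the closed convex hull; the paper glosses over this by writing ``convex hull'' in the statement while proving a closure statement, and your explicit remark repairs that imprecision.
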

\begin{proof}
We claim that $F_{\omega}=\{\sum^{n}_{k=1}a_k \delta_{(x_k,v_k)}|n \in N, \sum^{n}_{k=1}a_k=1, 0 \le a_k \le 1,(x_k,v_k) \in G_{\omega} \}$ are dense in $P(G_{\omega})$. $G_{\omega}$ is a compact subset of the complete separable metric space $TM$ with distance $d_{TM}$. The proof that $F_{\omega}$ is dense in $P(G_{\omega})$ is as same as that of Lemma 3.3. See \cite{billingsley2013convergence}. Hence, $P(G_{\omega})$ is the convex hull spanned by $\{\delta_{(x,v)}|(x,v) \in G_{\omega}\}$
\end{proof}

\begin{lemma}
If $G$ is measurable, For any $\mu \in \Lambda$, for almost every $\omega \in \Omega$, there exists $\mu_{\omega} \in P(G_{\omega})$, such that for any measurable function $f:TM \times \Omega \to R$,we have 
\[
\int_{G}f(x,v,\omega)d\mu= \int_{\Omega}\int_{G_{\omega}}f(x,v,\omega)d\mu_{\omega}d\omega
\]
\end{lemma}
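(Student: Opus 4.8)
The statement is precisely a disintegration of the measure $\mu$ along the projection $\pi_\Omega : TM \times \Omega \to \Omega$, so the plan is to reduce it to Theorem 3.5. First I would verify the hypotheses of that theorem: by Assumption 7.1 the space $\Omega$ is Polish, and by Lemma 4.1 the tangent bundle $TM$ is a complete separable metric space, hence Polish as well. Regarding $\mu \in \Lambda$ as a Borel probability measure on the product $\Omega \times TM$ (identifying $TM \times \Omega$ with $\Omega \times TM$), I set $X = \Omega$, $Y = TM$, and let $\nu = \pi_\Omega \mu$ be the marginal of $\mu$ on $\Omega$; this is the measure to which the notation $d\omega$ in the statement refers (note that for a general $\mu \in \Lambda$ this marginal need not equal $\mathbb{P}$, so the reading $d\omega = \pi_\Omega\mu$ is the correct one).

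Applying Theorem 3.5 then yields a $\nu$-almost everywhere uniquely determined family $(\mu_\omega)_{\omega \in \Omega}$ of Borel probability measures on $TM$ such that $\omega \mapsto \mu_\omega(B)$ is measurable for every Borel $B \subset TM$, and such that for every Borel measurable $f : TM \times \Omega \to [0,\infty)$,
\[
\int_{TM \times \Omega} f(x,v,\omega)\, d\mu = \int_\Omega \int_{TM} f(x,v,\omega)\, d\mu_\omega(x,v)\, d\nu(\omega).
\]
The remaining task is to upgrade this to the assertion that $\mu_\omega \in P(G_\omega)$ for $\nu$-almost every $\omega$, i.e. that each conditional measure is carried by the fibre $G_\omega$. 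This is exactly where the hypothesis that $G$ is measurable enters: the indicator $\mathbf{1}_G$ is then a Borel function on $TM \times \Omega$ to which the displayed formula applies, and since the $\omega$-section of $G$ is by construction $G_\omega$ (which is compact, hence Borel, so $\mu_\omega(G_\omega)$ is well defined) we have $\mathbf{1}_G(x,v,\omega) = \mathbf{1}_{G_\omega}(x,v)$. Because $\mu$ is supported in $G$, taking $f = \mathbf{1}_G$ gives
\[
1 = \mu(G) = \int_\Omega \mu_\omega(G_\omega)\, d\nu(\omega),
\]
and as $\mu_\omega(G_\omega) \le 1$ for every $\omega$, this forces $\mu_\omega(G_\omega) = 1$ for $\nu$-almost every $\omega$. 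Thus $\mu_\omega$ is a probability measure on $G_\omega$, that is $\mu_\omega \in P(G_\omega)$, for $\nu$-almost all $\omega$, and $\int_{TM}$ may be replaced by $\int_{G_\omega}$ in the integral formula.

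Finally I would remove the sign restriction on $f$: for a general measurable $f : TM \times \Omega \to \mathbb{R}$ that is integrable against $\mu$, I split $f = f^{+} - f^{-}$ into its positive and negative parts, apply the non-negative case to each, and subtract, which yields the stated identity. I expect the only genuinely delicate point to be the fibre-support step: one must check that the measurability of $G$ legitimately permits $\mathbf{1}_G$ to be fed into the disintegration formula and that the $\omega$-sections of $G$ coincide with $G_\omega$. Once $\mu_\omega(G_\omega)=1$ is established the rest is routine measure theory resting entirely on Theorem 3.5, and Lemma 7.2 is not needed here beyond confirming that the objects $\mu_\omega$ indeed lie in $P(G_\omega)$.
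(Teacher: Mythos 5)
Your proposal is correct and follows essentially the same route as the paper: both apply the disintegration theorem of Section 3 (Theorem 3.1 in the paper's numbering, not 3.5) to the projection $\pi_{\Omega}$, obtaining conditional measures $\nu_{\omega}$ on $TM$, and then use the measurability of $G$ together with $\mu(G)=1$ to force the conditionals onto the fibres $G_{\omega}$. If anything you are more careful than the paper, which simply sets $\mu_{\omega}=\chi_{G_{\omega}}\nu_{\omega}$ and asserts $\mu_{\omega}\in P(G_{\omega})$ without verifying $\nu_{\omega}(G_{\omega})=1$ almost everywhere; your computation $1=\mu(G)=\int_{\Omega}\mu_{\omega}(G_{\omega})\,d\nu(\omega)$ supplies exactly that missing normalization step.
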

\begin{proof}
If $\mu \in \Lambda$, then $\mu \in P(TM \times \Omega)$, since $TM$ and $\Omega$ are polish spaces, by Theorem 3.1, for almost every $\omega \in \Omega$, there exists $\nu_{\omega} \in P(TM)$, such that for any measurable function $f:TM \times \Omega \to R$, we have
\[
\int_{TM \times \Omega}f(x,v,\omega)d\mu=\int_{\Omega}\int_{TM}f(x,v,\omega)d\nu_{\omega}d\omega
\]
Then, let
$\mu_{\omega}=\chi_{G_{\omega}}\nu_{\omega} \in P(G_{\omega})$, we know that
\begin{equation}\nonumber
\begin{aligned}
&\int_{TM \times \Omega}f(x,v,\omega)d\mu=\int_{TM \times \Omega}f(x,v,\omega)\chi_{G} d\mu\\
&=\int_{\Omega}\int_{TM}f(x,v,\omega)\chi_{G_{\omega}}d\nu_{\omega}d\omega=\int_{\Omega}\int_{TM}f(x,v,\omega)d\mu_{\omega}d\omega\\
\end{aligned}
\end{equation}

\end{proof}

Le $\pi_{\Omega}:TM \times \Omega \to \Omega$ as the canonical projection.
\begin{lemma}
If $\mu \in	\Lambda$, and $\mu$ is invariant under the transformation of $\Gamma(s)$, $s \in R$. We decompose $d\mu=d\mu_{\omega}d\omega$, where $\mu_{\omega}$ is a probability measure on $G_{\omega}$ for almost all $\omega \in \Omega$. We have $d\mu_{\omega}d\omega=d\Theta(s)^{*}\mu_{\theta(-s)\omega} d\theta(-s)\omega$. If $\pi_{\Omega}\mu$ is invariant under the transformation $\{\theta(s),s \in R\}$, we have $\mu_{\omega}=\Theta(s)^{*}\mu_{\theta(-s)\omega}$ for almost every $\omega \in \Omega$.
\end{lemma}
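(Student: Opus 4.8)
The plan is to disintegrate $\mu$ over its $\Omega$-marginal, transport that disintegration by $\Gamma(s)$, and then invoke the almost-everywhere uniqueness clause of the disintegration theorem (Theorem 3.1) as the final comparison. Throughout I write $\rho:=\pi_\Omega\mu\in P(\Omega)$ for the marginal, which is what the notation $d\omega$ abbreviates. Since $TM$ and $\Omega$ are Polish and $G$ is measurable, Lemma 7.4 applies and gives a measurable family $\{\mu_\omega\}$ with $\mu_\omega\in P(G_\omega)$ for $\rho$-a.e.\ $\omega$ and, for every bounded measurable $f$,
\[
\int_{TM\times\Omega} f \, d\mu = \int_\Omega \int_{TM} f(x,v,\omega)\, d\mu_\omega(x,v)\, d\rho(\omega);
\]
this is the decomposition $d\mu=d\mu_\omega\,d\omega$.

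Next I would compute the disintegration of $\Gamma(s)_*\mu$. Replacing $f$ by $f\circ\Gamma(s)$ above, pushing the inner integral forward by $\Theta(s)\colon G_\omega\to G_{\theta(s)\omega}$, and then substituting $\omega\mapsto\theta(s)\omega$ in the outer integral yields
\[
\int f \, d(\Gamma(s)_*\mu) = \int_\Omega \int_{TM} f(y,w,\omega)\, d\big(\Theta(s)^{*}\mu_{\theta(-s)\omega}\big)(y,w)\, d(\theta(s)_{*}\rho)(\omega),
\]
where $\Theta(s)^{*}$ denotes the push-forward by $\Theta(s)$ and $\Theta(s)^{*}\mu_{\theta(-s)\omega}$ is carried by $G_{\theta(s)\theta(-s)\omega}=G_\omega$, so it is an admissible fibre measure. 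Because $\Theta(s)$ is (a restriction of) a Lagrangian flow, hence continuous, and $\omega\mapsto\mu_\omega$ is measurable, the transported family $\omega\mapsto\Theta(s)^{*}\mu_{\theta(-s)\omega}$ is again a measurable family of probability measures, so the right-hand side is a genuine disintegration. Imposing $\Gamma(s)_*\mu=\mu$ turns this into the stated identity $d\mu_\omega\,d\omega=d\,\Theta(s)^{*}\mu_{\theta(-s)\omega}\,d\theta(-s)\omega$, the base $d\theta(-s)\omega$ standing for the transported reference measure $\theta(s)_{*}\rho$.

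Finally I would read off the conclusion. Projecting $\Gamma(s)_*\mu=\mu$ to $\Omega$ through the intertwining relation $\pi_\Omega\circ\Gamma(s)=\theta(s)\circ\pi_\Omega$ gives $\theta(s)_{*}\rho=\rho$; thus $\pi_\Omega\mu$ is automatically $\{\theta(s)\}$-invariant, and the hypothesis merely records this fact. With $\theta(s)_{*}\rho=\rho$ the two displayed formulas above are disintegrations of the \emph{same} measure $\mu$ over the \emph{same} marginal $\rho$, so the $\rho$-a.e.\ uniqueness in Theorem 3.1 forces the fibre measures to agree: $\mu_\omega=\Theta(s)^{*}\mu_{\theta(-s)\omega}$ for $\rho$-a.e.\ $\omega$, as claimed.

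The step I expect to require the most care is bookkeeping rather than ideas: aligning the base measures so that the uniqueness clause of Theorem 3.1 is genuinely applicable (this is exactly where $\theta(s)_{*}\rho=\rho$ enters), and verifying that $\omega\mapsto\Theta(s)^{*}\mu_{\theta(-s)\omega}$ is measurable and supported fibrewise in $G_\omega$, which relies on $G$ and each $G_\omega$ being measurable and on $\Theta(s)$ carrying $G_{\theta(-s)\omega}$ onto $G_\omega$. A subtlety worth flagging is that the $\rho$-null exceptional set depends on $s$, so the identity is to be understood for each fixed $s\in R$; obtaining a single null set valid simultaneously for all $s$ would need an extra Fubini argument in the variable $s$.
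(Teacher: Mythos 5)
Your proof is correct and follows essentially the same route as the paper: disintegrate $\mu$ via Lemma 7.4, rewrite $\int f\,d\mu=\int f\,d\Gamma(s)^{*}\mu$ by the change of variables $(x,v,\omega)\mapsto(\Theta(s)(x,v),\theta(s)\omega)$, substitute $\omega\mapsto\theta(-s)\omega$ in the base, and conclude from the almost-everywhere uniqueness of the disintegration (Theorem 3.1). Your added refinements --- that $\theta(s)_{*}(\pi_{\Omega}\mu)=\pi_{\Omega}\mu$ already follows from the intertwining $\pi_{\Omega}\circ\Gamma(s)=\theta(s)\circ\pi_{\Omega}$ (so the paper's final hypothesis is automatic), the measurability check for $\omega\mapsto\Theta(s)^{*}\mu_{\theta(-s)\omega}$, and the remark that the null set depends on $s$ --- make explicit points the paper leaves implicit, but do not constitute a different argument.
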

\begin{proof}
We assume that $\mu$ is invariant under $\{\Gamma(s),s \in R\}$.For any measurable function $f:TM \times T \times \Omega \to R$
\begin{equation}\nonumber
\begin{aligned}
&\int_{\Omega}\int_{G_{\omega}}f(x,v,\omega)d\mu_{\omega}d\omega=\int_{G}f(x,v,\omega)d\mu=\int_{G}f(x,v,\omega)d\Gamma^{*}(s)\mu\\
&=\int_{G}f(\Theta(s)|_{G_{\omega}}(x,v),\theta(s)\omega)d\mu=\int_{\Omega}\int_{G_{\omega}}f(\Theta(s)|_{G_{\omega}}(x,v),\theta(s)\omega)d\mu_{\omega}d\omega\\
&=\int_{\Omega}\int_{G_{\theta(-s)\omega}}f(\Theta(s)|_{G_{\theta(-s)(\omega)}}(x,v),\omega)d\mu_{\theta(-s)\omega} d\theta(-s)\omega\\
&=\int_{\Omega}\int_{G_{\theta(-s)\omega}}f(x,v,\omega)d\Theta(s)^{*}\mu_{\theta(-s)\omega} d\theta(-s)\omega\\
&=\int_{G}f(x,v,\omega)d\Theta(s)^{*}\mu_{\theta(-s)\omega} d\theta(-s)\omega\\
\end{aligned}
\end{equation}
Since $f$ is arbitrary, we have $d\mu_{\omega}d\omega=d\Theta(s)^{*}\mu_{\theta(-s)\omega} d\theta(-s)\omega$. If $\pi_{\Omega}\mu$ is invariant under $\{\theta(s),s\in R\}$, then we have $\mu_{\omega}=\Theta(s)^{*}\mu_{\theta(-s)\omega}$ for almost every $\omega \in \Omega$.
\end{proof}
\begin{theorem}
When $G$ is a measurable set in $TM \times \Omega$. If $\mu$ is an ergodic invariant measure of $\{\Theta(s),s \in R\}$,$\pi_{\Omega}(\mu)$ is invariant under the transformation $\{\theta(s),s \in R\}$. Then we know that for almost $\omega \in \Omega$, there exists $(x_{\omega},v_{\omega})\in TM$, such that $\mu_{\omega}=\delta_{(x_{\omega},v_{\omega})}$. And when $s \in R$, we have $\Theta(s)(x_{\omega},v_{\omega})=(x_{\theta(s)\omega},v_{\theta(s)\omega})$ for almost all $\omega \in \Omega$.
\end{theorem}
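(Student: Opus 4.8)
The plan is to show that ergodicity of $\mu$ forces each conditional measure $\mu_{\omega}$ to be an extreme point of $P(G_{\omega})$, and then to invoke Lemma 7.3 to identify the extreme points of $P(G_{\omega})$ with the Dirac masses. First I would record two preliminary reductions. Since $\pi_{\Omega}:(G,\Gamma(s),\mu)\to(\Omega,\theta(s),\pi_{\Omega}\mu)$ is a factor map, ergodicity of $\mu$ passes to the factor, so $\pi_{\Omega}\mu$ is $\theta$-ergodic, hence an extreme point of the set of $\theta$-invariant probability measures on $\Omega$; this legitimizes the normalization $\mu_{\omega}=\Theta(s)^{*}\mu_{\theta(-s)\omega}$ (rather than the merely density-level identity) supplied by Lemma 7.5. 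By Lemma 7.3, the extreme points of $P(G_{\omega})$ are precisely the Dirac masses $\delta_{(x,v)}$ with $(x,v)\in G_{\omega}$. Thus the whole statement reduces to the single claim that, for $\pi_{\Omega}\mu$-almost every $\omega$, the conditional $\mu_{\omega}$ is extreme in $P(G_{\omega})$.

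To attack extremality I would argue by contradiction using the equivariance. Suppose $\mu_{\omega}$ fails to be extreme on a set of $\omega$ of positive measure. The goal is to manufacture two distinct $\Gamma(s)$-invariant measures $\nu^{1},\nu^{2}\in\Lambda$ with $\tfrac12(\nu^{1}+\nu^{2})=\mu$, which contradicts the fact that an ergodic measure is extreme among invariant measures. Concretely I would form the fibered product $G\times_{\Omega}G=\{(\xi,\xi',\omega):\xi,\xi'\in G_{\omega}\}$, equip it with $\tilde{\mu}=\int_{\Omega}(\mu_{\omega}\otimes\mu_{\omega})\,d(\pi_{\Omega}\mu)(\omega)$ and the diagonal flow $S(s)(\xi,\xi',\omega)=(\Theta(s)\xi,\Theta(s)\xi',\theta(s)\omega)$; Lemma 7.5 makes $\tilde{\mu}$ invariant under $S(s)$. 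The diagonal $\Delta=\{(\xi,\xi,\omega)\}$ is measurable and $S$-invariant, and since $(\mu_{\omega}\otimes\mu_{\omega})(\Delta_{\omega})=\sum_{a}\mu_{\omega}(\{a\})^{2}$, one has $\tilde{\mu}(\Delta)=1$ if and only if $\mu_{\omega}$ is a Dirac mass for a.e.\ $\omega$. The conclusion is therefore equivalent to $\tilde{\mu}(\Delta)=1$.

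The main obstacle is exactly the step just described: ergodicity of $\mu$ alone does \emph{not} force $\tilde{\mu}(\Delta)=1$, because a non-extreme conditional $\mu_{\omega}$ need not admit any \emph{measurable equivariant} splitting, and without equivariance the pieces $\nu^{1},\nu^{2}$ are not $\Gamma(s)$-invariant and no contradiction ensues. This is where the specific geometry of global minimizers must enter. I would use that $\mu$ is carried by $G$, that the support of an invariant minimizing measure projects into the set where the calibrated backward/forward curves coincide (the points of $B^{\omega}_{t_0}$ of Lemma 7.2), and that through such points the minimizer is unique with a Lipschitz graph structure (Corollary 5.1 together with Lemmas 7.1 and 7.2). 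This uniqueness/non-crossing property is what excludes a positive ``spread'' of $\mu_{\omega}$ along the fibre and yields $\tilde{\mu}(\Delta)=1$; I expect this to be the only genuinely delicate part of the proof, and it is indispensable, since for a general ergodic skew-product (e.g.\ an irrational linear flow on $T^{2}$) the conditional measures can be Lebesgue rather than Dirac.

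Once $\mu_{\omega}=\delta_{(x_{\omega},v_{\omega})}$ for a.e.\ $\omega$, the remaining steps are routine. Measurability of $\omega\mapsto\mu_{\omega}$ from Lemma 7.4, combined with a measurable-selection theorem, produces a measurable map $\omega\mapsto(x_{\omega},v_{\omega})\in TM$. Finally, substituting the Dirac form into the equivariance relation of Lemma 7.5 gives
\[
\delta_{\Theta(s)(x_{\omega},v_{\omega})}=\Theta(s)^{*}\delta_{(x_{\omega},v_{\omega})}=\mu_{\theta(s)\omega}=\delta_{(x_{\theta(s)\omega},v_{\theta(s)\omega})},
\]
and comparing the atoms yields $\Theta(s)(x_{\omega},v_{\omega})=(x_{\theta(s)\omega},v_{\theta(s)\omega})$ for a.e.\ $\omega$ and every $s\in R$, which is the asserted equivariance.
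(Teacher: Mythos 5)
Your proposal follows the same skeleton as the paper's own proof: ergodicity makes $\mu$ an extreme point of the set of $\Gamma(s)$-invariant measures; the disintegration $d\mu=d\mu_{\omega}\,d\omega$ of Lemma 7.4 is supposed to transfer extremality to the conditionals $\mu_{\omega}$; Lemma 7.3 identifies extreme points of $P(G_{\omega})$ with Dirac masses; and the equivariance $\mu_{\omega}=\Theta(s)^{*}\mu_{\theta(-s)\omega}$ of Lemma 7.5 (cited in the paper as ``Lemma 7.6'', evidently a typo) yields $\Theta(s)(x_{\omega},v_{\omega})=(x_{\theta(s)\omega},v_{\theta(s)\omega})$ by comparing atoms, exactly as in your last display. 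Your closing steps are therefore the paper's closing steps.

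The genuine gap is the one you flagged yourself and then did not close: the claim that for a.e.\ $\omega$ the conditional $\mu_{\omega}$ is extreme in $P(G_{\omega})$ (equivalently, in your reformulation, that $\tilde{\mu}(\Delta)=1$ for the fibered self-joining). You correctly observe that ergodicity of $\mu$ alone cannot yield this --- extremality of $\mu$ among invariant measures only rules out \emph{measurable equivariant} splittings of the conditionals, and an irrational linear flow on $T^{2}$ gives an ergodic skew product whose conditionals are Lebesgue, not Dirac --- but you then defer the decisive argument to ``the geometry of global minimizers'' without carrying it out. Nothing in Lemmas 7.1, 7.2 or Corollary 5.1 is actually mobilized into an argument excluding spread of $\mu_{\omega}$ along the fiber; note in particular that $G_{\omega}$ may contain many distinct global minimizers, so uniqueness of the calibrated curve through differentiability points of $B^{\omega}_{t_0}$ does not by itself control the whole fiber. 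As submitted, your text is an outline whose central claim is unproven. You should know, however, that the paper does not close this hole either: its proof of Theorem 7.1 asserts in one line that ``By Lemma 7.4, for almost all $\omega$, $\mu_{\omega}$ is an extreme point of $P(G_{\omega})$'', although Lemma 7.4 is only the disintegration statement and, by your own counterexample, extremality of $\mu$ does not pass to the conditionals in this generality. So your diagnosis is sound and in fact exposes that the paper's deduction, read literally, is unjustified --- but since your proposal stops at precisely the same point, it is not a complete proof of the theorem.
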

\begin{proof}
We assume that $\mu$ is an ergodic invariant measure under the transformation $\{\Theta(s),s \in R\}$. Since the set of invariant measure is a closed convex set. It is well known  that the ergodic measure is the extreme point of the convex set, see \cite{katok1997introduction}. By Lemma 7.4, for almost all $\omega \in \Omega$, we know that  $\mu_{\omega}$ is an extreme point of $P(G_{\omega})$, there exists $(x_{\omega},v_{\omega}) \in TM$, such that $\mu_{\omega}=\delta_{(x_{\omega},v_{\omega})}$. By Lemma 7.6, when $s \in R$, we know that $(x_{\theta(s)\omega},v_{\theta(s)\omega})=\Theta(s)(x_{\omega},v_{\omega})$ for almost every $\omega \in \Omega$.
\end{proof}

\bibliography{bib}

\end{document}